\date{\today}
\newcommand{\distas}[1]{\mathbin{\overset{#1}{\kern\z@\sim}}}
\newcommand{\distras}[1]{
  \mathbin{\overset{#1}{\kern\z@\resizebox{\wd\mybox}{\ht\mysim}{$\sim$}}}
}
    \newtheorem{theorem}{Theorem}
    \newtheorem{lemma}[theorem]{Lemma}
    \newtheorem{proposition}[theorem]{Proposition}
\theoremstyle{definition} % For roman text in the body
    \newtheorem{definition}[theorem]{Definition}
    \newtheorem{remark}[theorem]{Remark}
    \newtheorem{example}[theorem]{Example}
    \newtheorem{exercise}[theorem]{Exercise}
\newcommand{\E}{\mbox{E}}
\def\Z{\mathbb{Z}}
\def\N{\mathbb{N}}
\def\PP{{\mathcal P}}
\def\R{\mathbb{R}}
\def\Z{\mathbb{Z}}
\def\u{{\bf u}}
\def\tends{\rightarrow}
\def\<{\langle}
\def\>{\rangle}
\def\bar{\overline}
\newcommand\Tr{{\mbox{Tr}}}
\newcommand\mnote[1]{} %off
\newcommand\be{\begin{equation*}}
\newcommand\ee{\end{equation*}}
\newcommand\ben{\begin{equation}}
\newcommand\een{\end{equation}}
\newcommand\bes{\begin{eqnarray*}}
\newcommand\ees{\end{eqnarray*}}
\newcommand\bex{\begin{exercise}}
\newcommand\eex{\end{exercise}}
\newcommand\beg{\begin{example}}
\newcommand\eeg{\end{example}}
\newcommand\benu{\begin{enumerate}}
\newcommand\eenu{\end{enumerate}}
\newcommand\beit{\begin{itemize}}
\newcommand\eeit{\end{itemize}}
\newcommand\berk{\begin{remark}}
\newcommand\eerk{\end{remark}}
\newcommand\bdefn{\begin{defintion}}
\newcommand\edefn{\end{definition}}
\newcommand\bthm{\begin{theorem}}
\newcommand\ethm{\end{theorem}}
\newcommand\bprf{\begin{proof}}
\newcommand\eprf{\end{proof}}
\newcommand\blem{\begin{lemma}}
\newcommand\elem{\end{lemma}}
\newcommand{\var}{\mbox{\rm Var}}
\newcommand{\Cov}{\mbox{\rm Cov}}
\newcommand{\sm}{{\raise0.3ex\hbox{$\scriptstyle \setminus$}}}
\def\tends{\rightarrow}
\def\CHI{\mathchoice%
{\raise2pt\hbox{$\chi$}}%
{\raise2pt\hbox{$\chi$}}%
{\raise1.3pt\hbox{$\scriptstyle\chi$}}%
{\raise0.8pt\hbox{$\scriptscriptstyle\chi$}}}
\def\smalloplus{\raise1pt\hbox{$\,\scriptstyle \oplus\;$}}
\title [Linear eigenvalue statistics of $XX'$] 
\author{Kiran Kumar A.S$^{\ast}$, Shambhu Nath Maurya$^{\dagger}$, Koushik Saha$^{\ddagger}$\\\\
	$^{\ast}$$^{\ddagger}$Department of Mathematics, 
	Indian Institute of Technology Bombay\\ Mumbai 400076, India \\\\
	$^{\dagger}$Statistics and Mathematics Unit,
	Indian Statistical Institute\\ Kolkata 700108, India}
\date{\today}
\thanks{ $^{\ast}$kiran [at] math.iitb.ac.in, $^{\dagger}$shambhumath4 [at] gmail.com, $^{\ddagger}$koushik.saha [at] iitb.ac.in}  
\begin{document}

\begin{abstract}

%This article focuses on linear eigenvalue statistics of some sample covariance matrices $XX^t$, where $X$ is the Toeplitz matrix $(T_{n\times p})$ or Hankel matrix $(H_{n\times p})$ with independent entries and some moment conditions. For $y \in (0, \infty)$, we show that the linear eigenvalue statistics of these matrices for polynomial test functions converge in distribution to Gaussian random variables when $p/n \tends y$. We find a closed form of variance formula for the limiting distributions. To establish the results, we use method of moments.
This article focuses on the fluctuations of linear eigenvalue statistics of $T_{n\times p}T'_{n\times p}$, where $T_{n\times p}$ is an $n\times p$ Toeplitz matrix with real, complex or time-dependent entries. We show that as $n \rightarrow
\infty$ and $p/n \rightarrow \lambda \in (0, \infty)$, the linear eigenvalue statistics of these matrices for polynomial test functions converge in distribution to Gaussian random variables. We also discuss the linear eigenvalue statistics of  $H_{n\times p}H'_{n\times p}$, when $H_{n\times p}$ is an $n\times p$ Hankel matrix. As a result of our studies, we also derive in-probability limit and a central limit theorem type result for Schettan norm of rectangular Toeplitz matrices. To establish the results, we use method of moments. 
\end{abstract}

\maketitle

\noindent{\bf Keywords:}
 Linear eigenvalue statistics, Toeplitz matrix, Hankel matrix, moment method, central limit theorem, weak convergence, process convergence.
 \vskip5pt
 \noindent{\bf AMS 2020 subject classification:} 60B20, 60B10, 60F05, 60G15

\section{\large \bf \textbf{Introduction and main results}}

%The study of random Hankel matrices finds its genesis in \cite{bai_zd_99}. Here, Bai proposed the study of limiting spectral distributions(LSD) of Hankel and the closely related Toeplitz matrix. The existence of LSD for Hankel matrices was shown independently by Hammond and Miller  \cite{hammond_miller_05} and Bryc et al. \cite{bryc_lsd_06} for i.i.d. input sequences with finite variance.  

The study of linear eigenvalue statistics is a popular area of research in random matrix theory. For an $n \times n$ matrix $A_n$, the linear eigenvalue statistics of $A_n$ is defined as 
\begin{equation} \label{eqn:LES}
\mathcal{A}_n(\phi)= \sum_{i=1}^{n} \phi(\lambda_i),
\end{equation} 
where $\lambda_1 , \lambda_2 , \ldots , \lambda_n$ are the eigenvalues of $A_n$ and $\phi$ is a `nice' test function. The studies on linear eigenvalue statistics started with the study of central limit theorems for linear eigenvalue statistics of sample covariance matrix \cite{arharov} and \cite{jonsson}. In 2004,  Bai and Silverstein \cite{bai2004clt} provided a CLT for linear eigenvalue statistics of large dimensional sample covariance matrices with general entries, using a technique based on the Stieltjes transform. Apart from sample covariance matrices, linear eigenvalue statistics have been extensively studied for other types of matrices also: for Wigner matrices by Johansson \cite{johansson1998}, Sinai and Soshnikov \cite{soshnikov1998tracecentral}, Lytova and Pastur \cite{lytova2009central}; for tridiagonal matrices by Popescu \cite{Popescu}; for Toeplitz matrix by Chatterjee \cite{chatterjee2009fluctuations}, Liu, Sun and Wang \cite{liu2012fluctuations}, Li and Sun \cite{li_sun_2015}; for Hankel matrix by Liu, Sun and Wang \cite{liu2012fluctuations}, Kiran and Maurya \cite{kiran+maurya_hankel_22}; and for circulant type of matrices by Adhikari and Saha \cite{adhikari_saha2018}, Maurya and Saha \cite{m&s_RcSc_jmp}, Bose et al. \cite{a_m&s_circulaun2020}.
%After this work, several follow-up papers\textendash \cite{pan_CLT+signalinfere_08}, \cite{zheng_LSS+samcov_15}, \cite{najim_LSS+samcov_16} and \cite{zou_LSS+samcov+depen_20} have subsequently improved the CLT result of \cite{bai2004clt} and provided various applications in high-dimensional statistics. 
%These CLTs have successful applications in high-dimensional statistics by solving notably important problems in parameter estimation or hypothesis testing with high-dimensional data.
The linear eigenvalue statistics of random matrices have found applications in different areas, for example \textendash in parameter estimation and  hypothesis testing (see, \cite{bai_LRT+samcov_09}, \cite{wang_sphericaltest_13} and  \cite{najim_LSS+samcov_16}); for testing on regression coefficients (see \cite{bai_testing+regression_13} and \cite{jiang_testing+independence_13}) and for testing  the independence of two large dimensional variables (see \cite{jiang_testing+independence_13} and \cite{bonder_test+indepen+vec_19}).   

 The study of limiting spectral distribution (LSD) and linear eigenvalue statistics for matrices of the type $XX^\prime$ has received considerable attention from the random matrix theory community. The most well-known result is the Mar\v{c}enko–Pastur law, which states that if the entries of $X$ are i.i.d. random variables with mean zero, variance 1 and finite higher moments, then the limiting spectral distribution when $n \rightarrow \infty, p/n \rightarrow \lambda>0$, is the Mar\v{c}enko–Pastur distribution with parameter $\lambda$. 
 
 The distribution of eigenvalues of $XX^\prime$ for $n \times p$ matrices $X$ with dependence structure among its entries has also attracted attention in recent times. For the sample correlation matrix with $p/n \rightarrow \lambda$,  the limiting spectral distribution was shown to be the Mar\v{c}enko–Pastur distribution with parameter $\lambda$  by Jiang in \cite{Jiang_Correlation_2004}. 
 For $p/n \rightarrow \lambda>0$ the limiting spectral distribution for Spearman's rank correlation matrix was derived in \cite{Bai_Zhou_Spearman_LSD_2008}, and the linear eigenvalue statistics was studied in \cite{Bao+Pan_Spearman_2015}. In recent times, the matrices of type $XX^\prime$ has also found application in error propagation in neural networks \cite{Bahri_Neuralnetwork_2020}. For results on limiting spectral distribution of matrices of type $XX^\prime$ with dependence structure which arise from neural networks, see \cite{Pastur_Neural_Net-Gaussian_2020} and \cite{pastur_Neural_IID_2022}.

%For $\lambda \in [0, \infty)$, the limiting spectral distribution of $XX^\prime$ where $X$ is an $n \times p$ Toeplitz matrix or Hankel matrix and $p/n \rightarrow \lambda$ was studied in \cite{bose+gango+sen_XX'_10} for independent entries, and for \red{truncated independent} entries in \cite{bose+priyanka_XX'_22}. In this paper, we study the linear eigenvalue statistics of $XX'$ when $X$ is $n\times p$ Toeplitz or Hankel matrix with real, complex or time-dependent entries. 

For $\lambda \in [0, \infty)$, the limiting spectral distribution of $XX^\prime$ are also studied for patterned matrices $X$ when $p/n \rightarrow \lambda$. For $X$ as $n \times p$ Toeplitz matrix, Hankel, circulant and reverse circulant matrices with real independent entries of mean zero variance one and uniformly bounded higher moments, the LSD of $XX^\prime$ was studied in Bose et al. \cite{bose+gango+sen_XX'_10}. In a recent article by Bose and Sen \cite{bose+priyanka_XX'_22}, the results of \cite{bose+gango+sen_XX'_10} were generalized to weaker assumptions on the entries. In this paper, we study the linear eigenvalue statistics of $XX'$ when $X$ is $n\times p$ Toeplitz or Hankel matrix with real, complex or time-dependent entries. The techniques in this papers can also be applied to study the linear eigenvalue statistics of $XX^\prime$ for other patterned random matrices $X$ also.

%A sequence of variables $\left\{x_{i} ; i \in \Z \right\}$ will be called an input sequence if a matrix has the entries as $\left\{x_{i} ; i \in \Z \right\}$. 
%Let $\mathbb{Z}$ be the set of all integers and let $\mathbb{Z}_{+}$ denotes the set of all non-negative integers.  \red{Should we define $L_{n \times p}$ for defining $n \times p$ matrices. Neither It will not make sense to talk about $n \times p$ matrices}  Let
%$$L_{n,p}:\{1,2, \ldots, n\} \times \{1,2, \ldots, p\} \rightarrow \mathbb{Z} $$ 
%be a sequence of functions. If $L_{n+1}(i, j)=L_{n}(i, j)$ whenever $1 \leq i, j \leq n$, then the $\left\{L_{n}\right\}$ are said to be nested.
%For nested functions $L_n$, we write $L=L_n$ and consider $n \times n$ matrices of the form
%$$
%A_{n}=\left(x_{L(i, j)}\right) .
%$$
%The function $L$ is known as a link function and $A_n$ is known as the patterned matrix corresponding to the link function $L$. 
%\red{Similarly, we can define a sequence of functions $L_{n \times p}$ and we can  consider $n \times p$ matrices of the form $$ A_{n \times p}=\left(x_{L_{n \times p}(i, j)}\right).$$
%If $L_{n \times p}$ is symmetric, that is, $L_{n \times p}(i, j)=L_{n \times p}(j, i)$ for all $i, j$, then we will say the matrix $A_{n \times p}$ is \textit{symmetric}. Also for a complex input sequence $\left\{x_{i} ; i \in \Z \right\}$, we will say the matrix $A_{n \times p}$ is \textit{Hermitian} if $\bar{x}_{L_{n \times p}(i, j)}= x_{L_{n \times p}(j,i)}$.}
\vskip3pt 
\noindent \textbf{Toeplitz matrix:}
For a sequence of random variables $\{a_i\}_{i \in \Z}$, the corresponding $n\times p$ random Toeplitz matrix is defined as:
\begin{align*}
	T_{n \times p}=\left(\begin{array}{ccccc}
		a_{0} & a_{-1} & a_{-2} & \cdots & a_{1-p}\\
		a_{1} & a_{0} & a_{-1} & \cdots & a_{2-p}\\
		a_{2} & a_{1} & a_{0} & \cdots & a_{3-p}
		\\ \vdots &\vdots &\vdots & \cdots & \vdots\\
		a_{n-1}& a_{n-2}& a_{n-3}& \cdots & a_{n-p}
	\end{array}  \right).
\end{align*}
The $(i,j)$-th element of a Toeplitz matrix is $a_{i-j}$. Toeplitz matrices fall into different types depending on the relation between the $a_i$'s. We consider the following types:

\begin{enumerate}[(I)]
	\item If $\{a_i\}_{i \in \N}$ is a sequence of independent real-valued random variables and $a_{-i}=a_i$ for all $i \geq 1$, then we shall call $T_{n \times p}$ a \textit{symmetric Toeplitz matrix}. If $T_{n \times p}$ is not a symmetric Toeplitz matrix, we shall call $T_{n \times p}$, a \textit{non-symmetric Toeplitz matrix}.
	\item If $\{a_i\}_{i \in \N}$ is a sequence of independent complex-valued random variables and $a_{-i}=\bar{a}_i$ for all $i \geq 1$, then we shall call $T_{n \times p}$ a \textit{Hermitian Toeplitz matrix}.  	
\end{enumerate}  
%The symmetric Toeplitz matrix has its $(i,\;j)$-th element as $a_{|j-i|}$.
%\vskip2pt 
%\noindent \textbf{Hankel matrix:}  An $n\times p$  Hankel matrix $	H_{n \times p}$ is defined by its $(i,j)$-th element as $a_{i+j-2}$. Hankel matrices are closely related to Toeplitz matrices and Hankel matrices can be obtained from a Toeplitz matrix via row permutation. More specifically for the backward identity permutation matrix $P_{n \times n} = (\delta_{i-1, n-j} )_{i,j=1}^n$ and for any Toeplitz matrix $T_{n \times p}$, $P_{n \times n} T_{n \times p}$ is a Hankel matrix and conversely for any Hankel matrix $H_{n \times p}$, $P_{n \times n} H_{n \times p}$ is a  Toeplitz matrix. For a Hankel matrix of type $P_{n \times n} T_{n \times p}$, the $(i,j)$-th element is $a_{n+1-i-j}$ and it looks like the following:
%
%\begin{align*}
%	H_{n \times p}=\left(\begin{array}{ccccc}
%		a_{n-1}& a_{n-2}& a_{n-3}& \cdots & a_{n-p}\\
%		a_{n-2}& a_{n-3}& a_{n-4}& \cdots & a_{n-p-1}\\
%		a_{n-3}& a_{n-4}& a_{n-5}& \cdots & a_{n-p-2}
%		\\ \vdots &\vdots &\vdots & \cdots & \vdots\\
%		a_{0} & a_{-1} & a_{-2} & \cdots & a_{1-p}
%	\end{array}  \right).
%\end{align*}
% Note that $P_n^{-1} = P_n$.
% we obtain that $H_n=P_nT_n$, where $P_n=(\delta_{i-1,n-j})_{n\times n}$ is the {\it backward identity } matrix  of dimension $n$.
 
  For a Toeplitz matrix $T_{n\times p}$, we define the centralized and normalized linear eigenvalue statistics of $T_{n \times p} T'_{n \times p}$ as 
\begin{align} \label{eqn:wp_Hn_odd}
	w_k :=  \frac{1}{\sqrt{n}} \big[ \Tr(T_{n \times p} T'_{n \times p})^{k} - \E[\Tr(T_{n \times p} T'_{n \times p})^{k}] \big].
	%\eta_k :=  \frac{1}{\sqrt{n}} \big[ \Tr(H_{n \times p} H'_{p \times n})^{k} - \E[\Tr(H_{n \times p} H'_{p \times n})^{k}] \big].
\end{align}
For a sequence of random variables $\{x_i\}_{i \in I}$, we assume some moment conditions, which are required to state our main theorems.
\vskip2pt
\noindent \textbf{Assumption I.} Let $\{x_i\}_{i \in I}$ be a sequence of independent real-valued random variables with the following  moment conditions
$$\E[x_i]=0, \ \E[x_i^2]=1 \ \forall \ i \in I \text{ and} \ \sup _{i \in I} \E\big[\left|x_{i}\right|^{k}\big]=\alpha_{k}<\infty  \text { for } k \geq 3.$$
%\begin{equation}\label{eqn:condition}
%\E[x_i]=0, \ \E[x_i^2]=1 \ \forall \ i \in \Z \text{ and} \ \sup _{i \in \Z} \E\big[\left|x_{i}\right|^{k}\big]=\alpha_{k}<\infty  \text { for } k \geq 3.
%\end{equation}  
%\textcolor{red}{Should we assume that $a_j=a_{-j}$ for all $j=0,1,\ldots$ for the Toeplitz matrix?}

\noindent The following theorem provides the fluctuation of linear eigenvalue statistics of $T_{n \times p} T'_{n \times p}$.
\begin{theorem}\label{thm:LES_TT'}
Let $\lambda \in (0,\infty)$ and  $\{a_i\}_{i \in \N}$ be a sequence of random variables which satisfy Assumption I with $a_0 \equiv 0$. Suppose $T_{n \times p}$ is the $n \times p$ symmetric Toeplitz matrix with input entries $\{\frac{a_i}{\sqrt{n}}\}_{i \geq 0}$.
%, that is, $a_{-i}=a_i$ for all $i \in N$.
Then for every  $k\geq 1$, as $ n,p \tends \infty$ with $p/n \tends \lambda$,
\begin{equation*}
	w_k \stackrel{d}{\rightarrow} N_k,
\end{equation*}
	where $\{N_{k}; k\geq 1\}$ are zero mean Gaussian distributions with covariance structure as in (\ref{eqn:sigma_p,q_TT'-sym}).
Moreover, for a given polynomial $Q(x)=\sum_{j=0}^{k}c_{j} x^{j}$ with degree $k\geq 1$, 
\begin{equation*}
	w_Q:= \frac{1}{\sqrt{n}} \big[ \Tr Q(T_{n \times p}T'_{n \times p}) - \E[\Tr Q(T_{n \times p}T'_{n \times p})] \big] \stackrel{d}{\rightarrow} N(0,\sigma_Q^2),
\end{equation*}
 where $N(0,\sigma_Q^2)$ is the Gaussian distribution with mean zero and variance
 \begin{equation}\label{eqn:var Toeplitz symm}
 	\sigma_Q^2= \sum_{j_1,j_2=0}^{k}c_{j_1}c_{j_2} \Cov(N_{j_1},N_{j_2}).
 \end{equation}
\end{theorem}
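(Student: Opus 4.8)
The plan is to establish joint convergence of the vector $(w_{k_1},\dots,w_{k_r})$ to a mean-zero Gaussian vector by the method of moments, after which both assertions of the theorem follow at once. Since $\Tr Q(T_{n \times p}T'_{n \times p}) = \sum_{j=0}^{k} c_j \Tr(T_{n \times p}T'_{n \times p})^{j}$ and the $j=0$ term is the constant $c_0 n$, which disappears under centering, we have $w_Q = \sum_{j=1}^{k} c_j w_j$. Hence, once $(w_1,\dots,w_k)$ is shown to converge jointly to a Gaussian vector $(N_1,\dots,N_k)$, the continuity of the linear map together with the Cramér–Wold device gives $w_Q \convd \sum_{j=1}^{k} c_j N_j$, which is Gaussian with variance $\sum_{j_1,j_2} c_{j_1}c_{j_2}\Cov(N_{j_1},N_{j_2})$, i.e.\ (\ref{eqn:var Toeplitz symm}). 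So it suffices to prove that for every finite collection $k_1,\dots,k_r$ the joint moments $\E\big[\prod_{u=1}^{r} w_{k_u}\big]$ converge to the corresponding moments of a Gaussian vector: the limiting first moment is $0$ (immediate from centering), the second moments converge to a covariance, and all joint cumulants of order $\ge 3$ vanish in the limit.

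The starting point is the trace expansion. Writing the $(i,j)$ entry of $T_{n\times p}$ as $a_{i-j}/\sqrt{n}$ gives $(T_{n \times p}T'_{n \times p})_{ij} = \frac{1}{n}\sum_{l=1}^{p} a_{i-l}\,a_{j-l}$, and therefore
$$
\Tr(T_{n \times p}T'_{n \times p})^{k} = \frac{1}{n^{k}} \sum_{i_1,\dots,i_k=1}^{n}\ \sum_{l_1,\dots,l_k=1}^{p} \prod_{m=1}^{k} a_{i_m-l_m}\,a_{i_{m+1}-l_m},
$$
with the cyclic convention $i_{k+1}=i_1$. Each summand is a product of $2k$ entries indexed by the differences $i_m-l_m$ and $i_{m+1}-l_m$. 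Using $\E[a_i]=0$, $\E[a_i^2]=1$, independence, and the symmetry $a_{-i}=a_i$, the expectation of such a product is nonzero only when the $2k$ indices group into blocks within which all the $a$'s coincide, i.e.\ the absolute values of the differences agree in each block. I would organize the contributions by the partition (the ``matched word'') induced on the $2k$ edges, separating the pair-matched words (all blocks of size exactly $2$) from those having a block of size $\ge 3$.

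Next comes the leading-order analysis by counting degrees of freedom. Each $w_{k_u}$ carries a factor $n^{-1/2}$ on top of the $n^{-k_u}$ inside the trace, so for the product $\prod_u w_{k_u}$ one superimposes the $r$ circuits and, for each matched word, counts the number of free index parameters. A standard estimate shows that a word contributes at leading order only if it is pair-matched \emph{and} its pairing graph is connected across the circuits it touches: a block of size $\ge 3$ over-constrains the indices and loses powers of $n$, while disconnected contributions are annihilated by the centering. This forces the limiting odd moments to vanish and reduces the even moments to a sum over pairings in which the circuits are matched two at a time, i.e.\ Wick's formula, which is exactly the vanishing of cumulants of order $\ge 3$ and hence gives Gaussianity. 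For a surviving connected pair-matching between two circuits of lengths $j_1$ and $j_2$, the remaining constrained sum over the free $i$- and $l$-indices becomes, after rescaling $i/n$ and $l/n$, a Riemann sum converging to an integral; summing over all admissible pairings (tracking both $i_m-l_m = \pm(i_{m'}-l_{m'})$ possibilities produced by $a_{-i}=a_i$) yields $\Cov(N_{j_1},N_{j_2})$, matching (\ref{eqn:sigma_p,q_TT'-sym}).

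The main obstacle is the combinatorial classification of the surviving pair-matchings together with the verification that every other word is negligible. Two features make this delicate. First, the symmetric constraint $a_{-i}=a_i$ roughly doubles the matching possibilities and forces one to track an orientation/sign in each pair, complicating both the degrees-of-freedom count and the resulting integrals. Second, one must show uniformly that words containing a block of size $\ge 3$ and all disconnected configurations contribute $o(1)$; this is precisely where the uniform bound $\sup_{i}\E|a_i|^{k}=\alpha_k<\infty$ from Assumption~I is used, and where the counting must be sharp enough to beat the number of terms. Converting the leading pair-matched sums into the correct limiting integrals, and thereby reproducing the stated covariance, is the other laborious step, but it is routine once the admissible pairings have been correctly enumerated.
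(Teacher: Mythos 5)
Your overall route---expand the trace, classify matched words by partitions, count degrees of freedom, pass to Riemann sums, and close with Wick's formula---is the same as the paper's (method of moments plus a cluster/connectedness argument). However, there is a genuine error at the heart of your leading-order analysis: the claim that ``a block of size $\ge 3$ over-constrains the indices and loses powers of $n$,'' so that only pair-matched words survive. That claim is correct for the single-circuit moment computation (Theorem \ref{thm: Moment TT^t}), but it fails for the covariance $\Cov(w_{k_1},w_{k_2})$, and that is exactly where the fourth moment of the entries enters. In the two-circuit sum a cross-matched \emph{pair} partition does not retain all $k_1+k_2$ of its free parameters: the two constraints $\sum_q(-1)^q j_q=0$ and $\sum_q(-1)^q j'_q=0$ (one per circuit) each determine one index, leaving $k_1+k_2-1$ free difference parameters, so together with the two free diagonal indices the count is $n^{k_1+k_2+1}$, exactly matching the normalization. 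A partition with one block of size four consisting of two edges from each circuit---the class $\PP_{2,4}(2k_1,2k_2)$ of Definition \ref{def:P_2,4}---has only $k_1+k_2-1$ blocks, but its sign structure makes both constraints automatic, so it \emph{also} produces $n^{k_1+k_2+1}$: the same order, not smaller. Its expectation factor is $\E[a^4]-(\E[a^2])^2=\kappa-1$, and this is precisely the second sum $(\kappa-1)\sum_{\pi\in\PP_{2,4}(2k_1,2k_2)}(f^-_{II}(\pi)+f^+_{II}(\pi))$ in (\ref{eqn:sigma_p,q_TT'-sym}).

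Consequently, carrying out your plan as written would yield a limiting covariance containing only the pair-matching term, i.e.\ a ``universal'' variance independent of $\E[a_i^4]$; this contradicts (\ref{eqn:sigma_p,q_TT'-sym}) and is wrong whenever $\kappa\neq 1$, so your claimed match with the stated covariance cannot go through. The repair is the paper's Case II analysis in the proof of Theorem \ref{thm:covarTT'}: among partitions with $k_1+k_2-1$ blocks, exactly those in $\PP_{2,4}(2k_1,2k_2)$ (one cross-matched block of four, all other blocks pairs lying inside a single circuit) survive, and they must be kept alongside the cross-matched pair partitions of $\PP_2(2k_1,2k_2)$. The rest of your outline---the reduction $w_Q=\sum_{j\ge1}c_jw_j$, negligibility of clusters of three or more circuits (the paper's Lemma \ref{lem:maincluster_TT'}, via Lemma \ref{lem:B_{P_l}}), the Wick factorization of higher moments, and the Riemann-sum limit with sign bookkeeping forced by $a_{-i}=a_i$---agrees with the paper's argument and is fine modulo this correction.
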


\begin{remark}\label{rem:a0_neq0_sym}
In  Theorem \ref{thm:LES_TT'}, we considered $a_0\equiv 0$. If $a_0$ is a non-zero random variable, then the fluctuation of linear eigenvalue statistics may not be Gaussian. For the fluctuation result similar to Theorem \ref{thm:LES_TT'} for non-zero $a_0$, see Theorem \ref{thm:LES_TT'_a0}.
\end{remark}

The study of matrix-valued stochastic processes is also an area of considerable interest in random matrix theory. Among matrix-valued stochastic processes, of particular interest is the case where the matrix entries are Brownian motions. Next, we consider time-dependent symmetric Toeplitz matrix $T_{n\times p}(t)$ with entries $\{ \frac{b_m(t)}{\sqrt{n}};t\geq 0\}_{m\geq 0}$, where $\{b_m(t);t\geq 0\}_{m \geq 1}$ are independent standard Brownian motions and $b_0(t) \equiv 0$. We study the joint fluctuation and tightness of the time-dependent linear eigenvalue statistics for  $T_{n \times p}(t) T'_{n \times p}(t)$ 
with polynomial test functions. 
%For $T_{n \times p}(t) T'_{n \times p}(t)$ with $\phi(x)=x^{k}$, $k\geq 1$, observe that the linear eigenvalue statistics is $\Tr\big\{ T_{n \times p}(t) T'_{n \times p}(t)\big\}^{k}$. 
For $k \geq 1$, we define  the centralized and normalized linear eigenvalue statistics as 
%and centre $\Tr(RC_n(t))^{2p}$, to study its fluctuation, and define 
\begin{equation}\label{eqn:w_p(t)_Tp}
	w_{k}(t) := \frac{1}{\sqrt{n}} \bigl[ \Tr\big\{ T_{n \times p}(t) T'_{n \times p}(t)\big\}^{k} - \E[\Tr\big\{ T_{n \times p}(t) T'_{n \times p}(t)\big\}^{k}]\bigr]. 
\end{equation} 

%Similarly, for $H_{n \times p}(t) H'_{p \times n}(t)$ with $\phi(x)=x^{p}$, $p\geq 1$, let
%\begin{equation}\label{eqn:w_p(t)_Hp}
%	\eta_k(t) := \frac{1}{\sqrt{n}} \bigl[ \Tr\big\{ H_{n \times p}(t) H'_{p \times n}(t)\big\}^{k} - \E[\Tr\big\{ H_{n \times p}(t) H'_{p \times n}(t)\big\}^{k}]\bigr].  
%\end{equation}
\noindent The following is our main result for time-dependent entries.

\begin{theorem}\label{thm:LES_Tp} 
	 Suppose $ k\geq 1$ and $\lambda \in (0,\infty)$. Then as $ n,p \tends \infty$ with $p/n \tends \lambda$,
		\begin{equation*}
		\{ w_{k}(t) ; t \geq 0\} \stackrel{\mathcal D}{\rightarrow} \{N_{k}(t) ; t \geq 0\},
			\end{equation*}
		where $\{N_{k}(t);t\geq 0, k\geq 1\}$ are mean zero Gaussian processes with covariance structure as in (\ref{eqn:covTT'(t)}).
	%	where 	$\stackrel{\mathcal D}{\rightarrow}$ denotes the process convergence as in Definition \ref{def:processconvergence}.
\end{theorem}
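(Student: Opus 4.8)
The plan is to establish the process-level convergence in \ref{thm:LES_Tp} by the two standard ingredients: convergence of all finite-dimensional distributions together with tightness. Since each $w_k(t)$ is, by \eqref{eqn:w_p(t)_Tp}, a fixed polynomial in the independent standard Brownian motions $\{b_m(t)\}_{m\ge 1}$, the sample paths $t\mapsto w_k(t)$ are almost surely continuous, so the processes live in $C[0,\infty)$ and it suffices to verify both properties on every compact interval $[0,T]$. For the finite-dimensional distributions I would fix times $t_1,\dots,t_r\ge 0$ and orders $k_1,\dots,k_r\ge 1$ and, via the Cram\'er--Wold device, reduce to showing that each linear combination $\sum_{\ell=1}^r c_\ell w_{k_\ell}(t_\ell)$ is asymptotically centered Gaussian. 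Following the method of moments, I would compute every joint moment $\E\big[\prod_{\ell=1}^r w_{k_\ell}(t_\ell)\big]$ and show that in the limit it obeys Wick's formula, so that the odd joint moments vanish and the even ones are sums over pair partitions of products of limiting covariances; this identifies the limit as the mean-zero Gaussian process $\{N_k(t)\}$.

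The simplification over Theorem \ref{thm:LES_TT'} is that here the entries are \emph{exactly} Gaussian: the family $\{b_m(t_\ell)\}$ is jointly Gaussian with $\E[b_m(s)b_{m'}(t)]=\delta_{m,m'}(s\wedge t)$, so no non-Gaussian corrections enter. After expanding each $\Tr\{T_{n\times p}(t_\ell)T'_{n\times p}(t_\ell)\}^{k_\ell}$ as a sum over circuits (index chains) exactly as in the proof of Theorem \ref{thm:LES_TT'}, every entry-moment that appears is given by Wick's theorem as a sum over pairings, each paired edge contributing a factor $\tfrac1n(t_i\wedge t_j)$ when its two entries lie in the blocks attached to times $t_i$ and $t_j$. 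The contribution to a covariance comes from the connected matchings, i.e. those with at least one cross-block pair, and the leading order in $n$ selects precisely the same circuit/partition structures that govern the fixed-time computation of Theorem \ref{thm:LES_TT'}; the only genuinely new feature is that each surviving pair is decorated by the corresponding $s\wedge t$ weight. Carrying this through would yield the limiting covariances $\Cov(N_{k_1}(s),N_{k_2}(t))$ recorded in (\ref{eqn:covTT'(t)}) and show that all higher joint cumulants vanish, giving joint Gaussianity.

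For tightness I would verify a Kolmogorov--Chentsov moment bound on the increments: for each fixed $k$ and each $T<\infty$ there should be a constant $C=C(k,T)$, uniform in $n$, with
\[
\E\big[|w_k(t)-w_k(s)|^4\big]\le C\,(t-s)^2,\qquad 0\le s,t\le T.
\]
Writing $w_k(t)-w_k(s)$ again as a centered polynomial in the Brownian motions and expanding the fourth moment by Wick, each increment factor forces at least one Brownian increment $b_m(t)-b_m(s)$ into every surviving pairing; since $\E[(b_m(t)-b_m(s))^2]=t-s$, a count of the pairings that contribute at leading order in $n$ should produce the quadratic bound $(t-s)^2$. Together with the finite-dimensional convergence above, this gives tightness on each $C[0,T]$ and hence on $C[0,\infty)$, completing the proof.

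The main obstacle I anticipate is the combinatorial bookkeeping underlying the first step: one must show that, among all Wick pairings of the $2\sum_\ell k_\ell$ entries produced by the expanded traces, exactly the pair-matched circuit configurations survive in the limit, so that the joint law is Gaussian and the higher cumulants die, while correctly attaching the time weights $s\wedge t$ to each surviving pair in order to obtain (\ref{eqn:covTT'(t)}). This is a time-decorated refinement of the circuit analysis behind Theorem \ref{thm:LES_TT'}, and the tightness bound will require the same circuit estimates applied to increments rather than to the statistics themselves.
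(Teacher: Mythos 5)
Your proposal is correct and follows essentially the same route as the paper: finite-dimensional convergence via Cram\'er--Wold, the method of moments and Wick's formula (with the cluster/circuit counting of Lemma \ref{lem:B_{P_l}} killing all but pair-matched configurations), plus tightness via the fourth-moment increment bound $\E|w_k(t)-w_k(s)|^4\le M_T(t-s)^2$, which is exactly the paper's Proposition \ref{pro:tight} with $\alpha=4$, $\beta=1$. The only cosmetic difference is in the covariance computation: you apply Wick directly with the kernel $s\wedge t$, while the paper (Lemma \ref{lem:covarTT'(t)}) decomposes $T_{n\times p}(t_2)=T_{n\times p}(t_1)+\bigl(T_{n\times p}(t_2)-T_{n\times p}(t_1)\bigr)$, uses independence and scaling of Brownian increments to reduce to the fixed-time Theorem \ref{thm:covarTT'}, and thereby obtains the binomially organized formula (\ref{eqn:covTT'(t)}); both computations yield the same limit.
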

%\begin{theorem}\label{thm:LES_Hp} 
% Suppose $ k\geq 1 $ and $y \in (0,\infty)$. Then as $ n,p \tends \infty$ with $p/n \tends y$,
%\begin{equation*}
%		\{ \eta_k(t) ; t \geq 0\} \stackrel{\mathcal D}{\rightarrow} \{N_k(t) ; t \geq 0\},
%\end{equation*}
%	where $\{N_k(t);t\geq 0, k\geq 1\}$ are zero mean Gaussian processes with  covariance structure as in \eqref{eqn:cov_Hp}. 
%\end{theorem
	Note that $\stackrel{d}{\rightarrow}$ is used to denote the weak convergence of random variables and $\stackrel{\mathcal D}{\rightarrow}$ is used to denote the process or functional convergence of random processes.

\begin{remark}\label{rem:a_0(t)neq0_sym}
We considered $b_0(t)\equiv 0$ in  Theorem \ref{thm:LES_Tp}. For non-zero continuous stochastic process $b_0(t)$, the limit may not be a Gaussian process. For details regarding the case $b_0(t)\neq 0$, see Theorem \ref{thm:LES_Tp_b0}.
\end{remark}
We also study the linear eigenvalue statistics of $T_{n\times p} T_{n\times p}'$ for other types of Toeplitz matrices. In Section \ref{sec:non-sym_Tnp}, we consider non-symmetric Toeplitz matrix $T_{n\times p}$. In Theorem \ref{thm:LES_TT'non-sym}, we establish results for the linear eigenvalue statistics of $T_{n\times p}T'_{n\times p}$ when the entries of $T_{n\times p}$ are independent, and in Theorem \ref{thm:LES_Tp(t)_non-sym}, we study the process convergence of time-dependent linear eigenvalue statistics of $T_{n\times p}(t)T'_{n\times p}(t)$ when the entries are independent Brownian motions. In Section \ref{sec:her_Tnp}, we consider  Hermitian Toeplitz matrix $T_{n\times p}$ and study the linear eigenvalue statistics of $T_{n\times p}T^*_{n\times p}$ for both, independent (see  Theorem \ref{thm:LES_TT'her}) and Brownian motion entries (see Theorem \ref{thm:LES_Tp(t)_her}). From the proofs of Theorem \ref{thm:LES_TT'} and Theorem \ref{thm:LES_Tp}, we see that the results and idea of proofs for  symmetric Toeplitz matrix is more intricate compared to the non-symmetric Toeplitz matrix and Hermitian Toeplitz matrix. Due to this, our main focus is on symmetric Toeplitz matrix.

\vskip4pt 
\noindent \textbf{Hankel matrix:} Given a sequence of random variables $\{a_i\}_{i \in \N}$, the corresponding $n\times p$  Hankel matrix is defined as $H_{n \times p}=\left(a_{i+j-1}\right)_{i,j=1}^{n,p}$. Hankel matrix is closely related to Toeplitz matrix and a Hankel matrix can be alternatively defined as $H_{n \times p}=P_{n \times n} T_{n \times p}$, where $T_{n \times p}$ is the non-symmetric Toeplitz matrix with input sequence $\{a_i\}_{i \in \Z}$ and $P_{n\times n}$ is the backward identity matrix  of dimension $n$. A Hankel matrix of type $P_{n \times n} T_{n \times p}$, is given by:
\begin{align*}
	H_{n \times p}=\left(\begin{array}{ccccc}
		a_{n-1}& a_{n-2}& a_{n-3}& \cdots & a_{n-p}\\
		a_{n-2}& a_{n-3}& a_{n-4}& \cdots & a_{n-p-1}\\
		a_{n-3}& a_{n-4}& a_{n-5}& \cdots & a_{n-p-2}
		\\ \vdots &\vdots &\vdots & \cdots & \vdots\\
		a_{0} & a_{-1} & a_{-2} & \cdots & a_{1-p}
	\end{array}  \right).
\end{align*}

\noindent The following remark provides the fluctuation behavior of linear eigenvalue statistics of $H_{n\times p}H'_{n\times p}$.
\begin{remark}
	Note that if we consider Hankel matrix  $H_{n \times p}$ of the form  $P_{n\times n}T_{n \times p}$, where $T_{n \times p}$ is an $n \times p$ non-symmetric Toeplitz matrix and $P_{n\times n}=(\delta_{i-1,n-j})_{n\times n}$ is the {\it backward identity } matrix  of dimension $n$, then 
	$$H_{n \times p} H'_{n \times p}= P_{n\times n} T_{n \times p} T'_{n \times p}  P_{n\times n}.$$
	Since $P_{n\times n}= (P_{n\times n})^{-1}$, we have that $H_{n \times p} H'_{n \times p}$ and $T_{n \times p} T'_{n \times p}$ have same eigenvalues and hence behaviour of the fluctuation of linear eigenvalue statistics of $H_{n \times p} H'_{n \times p}$ is same as that of $T_{n \times p} T'_{n \times p}$ obtained in Theorem \ref{thm:LES_TT'non-sym}.
\end{remark}
Schatten $r$-norms are an important class of matrix norms that includes trace norm and Frobenius norm as special cases. For a compact operator $A$ between two Hilbert spaces, the Schatten $r$-norm of $A$ is defined as the $\ell_r$ norm of singular values of $A$, provided it is finite. The space of all $r$-summable compact operators on a Hibert space under this norm is a Banach space and furthermore, Schatten $r$-norms are invariant under multiplication by unitary operators. Schatten $r$-norms naturally arises in the study of non-self adjoint operators and symmetrically normed operator ideals. For detailed discussions on these connections, see \cite{Gohberg_Non-Selfadjoint_1969}.
As a result of its connection to singular values, Schatten norms are suitable candidates for applications in low-rank matrix approximation \cite{Nie_Huang_Ding_2021} and image reconstruction \cite{Xie+Gu_Schatten_norm2016},\cite{Stamatios_Image_Schatten_2013}.

Our studies on linear eigenvalue statistics gives the asymptotic behaviour of Schatten $r$-norms of $n\times p$ Toeplitz matrices for large values of $n$ and $p/n \rightarrow \lambda>0$. The results obtained in this paper are analogous to the results on fluctuations of spectral norms of square Toeplitz matrices and Hankel matrices in \cite{bose+Sen_spec+norm_07}.

Now we briefly outline the rest of the manuscript. In Section \ref{sec:pre_Hankelodd}, we introduce some combinatorial objects and derive some results which are needed for the proofs of theorems. In Section \ref{sec:symToep_ind} and Section \ref{sec:T_np_process},  we consider symmetric Toeplitz matrix  and prove Theorem \ref{thm:LES_TT'} and Theorem \ref{thm:LES_Tp}, respectively. In Section \ref{sec:non-sym_Tnp}, we consider  non-symmetric Toeplitz matrix and study its linear eigenvalue statistics, and in Section \ref{sec:her_Tnp} we  study linear eigenvalue statistics of Hermitian Toeplitz matrices. Finally in Section \ref{sec:schatten-norm}, we study fluctuation of Schatten $r$-norm for different types of Toeplitz matrices.
%Finally, in Section \ref{sec:wp_process_Hnodd} we prove Theorem \ref{thm:process_wp_hnodd} by using moment method and some results on process convergence. 
\section{\large \bf \textbf{Preliminaries}} \label{sec:pre_Hankelodd}

We first introduce certain partitions, concept of matching and some integrals associated with them. Later, we derive the trace formula for $XX'$ when $X$ is a Toeplitz matrix.

\begin{definition}\label{defn:partition}
   Consider the set $[n]=\{ 1,2 ,\ldots , n\}$.  We call $\pi=\{V_{1},\ldots,V_{r}\}$ a partition of $[n]$ if the
   blocks  $V_{j} \,(1\leq j \leq r)$ are pairwise disjoint, non-empty
   subsets of $[n]$ such that $[n]=V_{1}\cup\cdots\cup V_{r}$. The set of all partitions of $[n]$ is denoted by
   $\mathcal{P}(n)$. The number of blocks in a partition $\pi$ is denoted by $|\pi|$, and the number of
   elements of a block $V_{j}$ is denoted by $|V_{j}|$. If $i,j$ belong to the same block of $\pi$, we write $i \sim_{\pi} j$. A partition $\pi$ of $[n]$ is called a \textit{pair-partition} if each block of $\pi$ has exactly two elements. The set of all pair-partitions of $[n]$ is denoted by $\PP_2(n)$. Clearly, $\PP_2(n)= \emptyset$ for odd $n$. 
\end{definition}

For a partition $\pi$ of $[n]$, we define a canonical ordering of its blocks by arranging the blocks in the increasing order of their smallest elements. For a partition $\pi$ with blocks arranged as $B_1, B_2,\ldots, B_k$, we define a surjective function, also denoted by $\pi: [n ]\rightarrow [k]$, given by 
\begin{equation}\label{eqn: pi map}
\pi (i)=j, \  \ \mbox{if $i \in B_j$}.
\end{equation}

In order to calculate the moments of the limiting distribution, and the limiting  covariance for the linear eigenvalue statistics, we first review some basic combinatorical
concepts associated with them. Later we also define some integrals associated with different partitions.

\begin{definition} \label{def:P_2,4}
	Let $k_1,k_2$ be positive integers.
	 \begin{enumerate}[(i)]
%	 	\item Without loss of generality, we assume that $V_{1},\ldots,V_{r}$
%	 	have been arranged such that $s_{1}<s_{2}<\cdots<s_{r}$, where
%	 	$s_{j}$ is the smallest number of $V_{j}$. Therefore we can define
%	 	the projection $\pi (i)=j$ if $i$ belongs to the block $V_{j}$;
%	 	furthermore for two elements $p,q$ of $[n]$ we write
%	 	$p\thicksim_{\pi} q$ if $\pi (p)=\pi (q)$.
	
	 	\item Suppose $k_1+k_2$ is even. Consider the set of all pair-partitions $\pi \in \mathcal{P}_{2}(k_1+k_2)$ such that there exist $1\leq i
	 	\leq k_1<j \leq k_1+k_2$ such that $i\thicksim_{\pi} j$. In this case, we say that there is \textit{cross-matching} in $\pi$ and the set of all pair-partitions  $\pi \in \mathcal{P}_{2}(k_1+k_2)$ with cross-matching is denoted by $\mathcal{P}_{2}(k_1,k_2)$.
	 	
	 	\item When $k_1$ and $k_2$ are both  even, we denote a subset of
	 	$\mathcal{P}(k_1+k_2)$ by $\mathcal{P}_{2,4}(k_1,k_2)$, which consists
	 	of all  partitions $\pi=\{V_1,V_2,\ldots,V_{r}\}$ satisfying
	 	\begin{itemize}
	 		\item [(a)] $|V_{i}|=4$ for some
	 		$i$ and $|V_{j}|=2$, for all  $1\leq j \leq r, j\neq i$.
	 		
	 		\item [(b)]  $V_{j}\subseteq \{1,2, \ldots, k_1\} \, \mathrm{or}\, \{k_1+1,
	 		k_1+2, \ldots, k_1+k_2\}$ for all $1\leq j \leq r, j\neq i$.
	 		
	 		\item [(c)] two elements of $V_{i}$ come from $\{1,2, \ldots, k_1\}$ and the
	 		other two come from
	 		$\{k_1+1, p+2, \ldots, k_1+k_2\}$. 
	 	\end{itemize}
 		For other cases of $k_1$ and $k_2$, we  assume $\mathcal{P}_{2,4}(k_1,k_2)$ is an empty set.
	 \end{enumerate}

	% When $q=0$, we mean that $\mathcal{P}_{2}(p,0)$ denotes
	%$\mathcal{P}_{2}(p)$. In this section we assume $p+q$ is even %unless
	%we mention it
\end{definition}

\begin{definition}\label{def:SP+DP_partation}
	For a partition $\pi$ belonging to $ \mathcal{P}_2(k_1,k_2)$ or $\pi \in \mathcal{P}_{2,4}(k_1,k_2)$, we define the following types of blocks. 
	\begin{enumerate}[(i)]
		\item A block $\{r,s\}$ is called a \textit{same-parity block} if $r$ and $s$ have same parity.
		\item A block $\{r,s\}$ is called a \textit{different-parity block} if $r$ and $s$ have different parity. 
		\item A block $\{r_1,s_1,r_2,s_2\}$ with $r_1<s_1<r_2<s_2$ is called a \textit{different-parity block} if both $r_1+s_1$ and $r_2+s_2$ are odd. Otherwise the block is called a \textit{same-parity block}.
	\end{enumerate}
 The set of all same-parity blocks of $\pi$ is denoted by $SP(\pi)$ and the set of all different-parity blocks of $\pi$ is denoted by $DP(\pi)$.
%	and for a block $B_i$ such that $|B_i|=4$ with $B_i \cap [k_1]=\{r_1,s_1\}$ and $B_i \cap ([k_1] \setminus [k_2])=\{r_2,s_2\}$, we say 
\end{definition}

\begin{definition} \label{def:SP_2}
	A partition $\pi \in \mathcal{P}_2(2k_1)$ or $\pi \in \mathcal{P}_{2,4}(2k_1,2k_2)$ is called a \textit{different-parity partition}, if all blocks of $\pi$ are different-parity blocks. We use the notations, $\mathcal{DP}_{2}(2k_1)$, $\mathcal{DP}_{2}(2k_1,2k_2)$ and $\mathcal{DP}_{2,4}(2k_1,2k_2)$ to denote the set of all different-parity partitions in $\mathcal{P}_2(2k_1)$, $\mathcal{P}_2(2k_1,2k_2)$ and $\mathcal{P}_{2,4}(2k_1,2k_2)$, respectively.
%	\begin{enumerate}[(i)]
%		\item The set of all different-parity partitions in $\mathcal{P}_2(2k_1)$ shall be denoted by $\mathcal{DP}_{2}(2k_1)$.
%		\item The set of all different-parity partitions in $\mathcal{P}_2(2k_1,2k_2)$ shall be denoted by $\mathcal{DP}_{2}(2k_1,2k_2)$.
%		\item The set of all different-parity partitions in $\mathcal{P}_{2,4}(2k_1,2k_2)$ shall be denoted by $\mathcal{DP}_{2,4}(2k_1,2k_2)$.
%	\end{enumerate}
\end{definition}
Now  we define several types of definite integrals associated with
partitions in $\mathcal{P}_{2}(p,q)$ and  $\mathcal{P}_{2,4}(p,q)$.
For the reader's  convenience, we suggest to omit them for the
moment  and refer to them when they are needed in Section
\ref{sec:proof_thm_TT'} and Section \ref{sec:non-sym_Tnp}.
Consider a pair-partition $\pi \in \mathcal{P}_{2}(2k_1+2k_2)$. Note that $\pi$ can be expressed as $\pi=\{r_1,s_1\},\{r_2,s_2\},\ldots ,\{r_{k_1+k_2},s_{k_1+k_2}\}$ where for each $1 \leq i \leq k_1+k_2$, $r_i<s_i$. We consider the map $\epsilon_{\pi}:[2(k_1+k_2)] \rightarrow \{ \pm 1\}$ given by
%\begin{align} \label{eqn: epsilon map}
%\epsilon_{\pi}(r_i)=
%	\begin{cases} 1,\ \
%		i \ \text{is the smallest number of}\  \pi^{-1}(\pi(i));\\
%		(-1)^{r+s+1},\ \ \text{otherwise and if }\pi^{-1}(\pi(i))=\{r,s\}. 
%	\end{cases}
%\end{align}
\begin{align} \label{eqn: epsilon map}
	\epsilon_{\pi}(r_i) = 1  \text{ and }
	\epsilon_{\pi}(s_i) =(-1)^{1+r_i+s_i} \text{ for } 1 \leq i \leq k_1+k_2.
\end{align}
 For every pair partition $\pi \in \mathcal{P}_{2}(2(k_1+k_2))$, we
construct a projective relation between two groups of unknowns
${y_{1},\ldots,y_{2k_1+2k_2}}$ and ${x_{1},\ldots,x_{k_1+k_2}}$ as
follows:
\begin{equation}\label{eqn:relation y_i,x_i}
\epsilon_{\pi}(i)\,y_{i}=\epsilon_{\pi}(j)\,y_{j}=x_{\pi(i)}
\end{equation}
 whenever $i\thicksim_{\pi} j$. Note that $r,s$ have same parity if and only if $\epsilon_\pi(r)+\epsilon_\pi(s)=0$. Thus for all choices of unknowns ${x_{1},\ldots,x_{k_1+k_2}}$, the equation $\sum_{j=1}^{2k_1+2k_2}(-1)^jy_{j}=0$ is automatically satisfied.

%For $x_{0}, y_{0}, \lambda \in [0,1]$ and ${x_{1},\ldots,x_{k_1+k_2}\in	[-1, 1]}$, we define two kinds of integrals with Type I by

For $\pi \in \mathcal{P}_2(2k_1,2k_2)$ and $\lambda \in \R^+$, we define the following integrals:
%\prod_{j=1}^{p+q}\chi_{[0,1]}(x_{0}+b\sum_{i=1}^{j}y_{i})
\begin{align}	
	 f_{I}^{-}(\pi)=
	& \int_{[0,1]^2 \times [-m,m]^{|SP(\pi)|} \times[-\lambda,1]^{|DP(\pi)|}} \delta\left(\sum_{i=1}^{2k_1} (-1)^i y_{i}\right)
	\prod_{j=1}^{k_1}\chi_{[0,\lambda]}(x_{0}+\sum_{i=1}^{2j-1} y_{i}) \,\,\chi_{[0,1]}(x_{0}+\sum_{i=1}^{2j} y_{i}) \nonumber \\
	& \quad \times \prod_{j'=k_1+1}^{k_1+k_2} \chi_{[0,\lambda]}(y_{0}+\sum_{i=2k_1+1}^{2j'-1} (-1)^i y_{i})  \,\,\chi_{[0,1]}(y_{0}+\sum_{i=2k_1+1}^{2j'} (-1)^i y_{i}) \,d\,y_{0} 
	\prod_{l=0}^{k_1+k_2} d\, x_{l}, \label{eqn:f_Ipi-_T} 
\end{align} 
%and
\begin{align}
	 f_{I}^{+}(\pi)=
	& \int_{[0,1]^2 \times [-m,m]^{|SP(\pi)|} \times[-\lambda,1]^{|DP(\pi)|}} \delta\left(\sum_{i=1}^{2k_1} (-1)^i y_{i}\right)
	\prod_{j=1}^{k_1}\chi_{[0,\lambda]}(x_{0}+\sum_{i=1}^{2j-1} y_{i}) \,\,\chi_{[0,1]}(x_{0}+\sum_{i=1}^{2j} y_{i}) \nonumber \\
& \quad \times \prod_{j'=k_1+1}^{k_1+k_2} \chi_{[0,\lambda]}(y_{0}-\sum_{i=2k_1+1}^{2j'-1} (-1)^i y_{i})  \,\,\chi_{[0,1]}(y_{0}-\sum_{i=2k_1+1}^{2j'} (-1)^i y_{i}) \,d\,y_{0} 
\prod_{l=0}^{k_1+k_2} d\, x_{l}, \label{eqn:f_Ipi+_T}
\end{align} 
where $m=\min\{\lambda,1\}$; $SP(\pi), DP(\pi)$ are as in Definition \ref{def:SP+DP_partation}; $\delta_x$ is
the Dirac function concentrated at $x$ and $\chi$ is the indicator  function. In (\ref{eqn:f_Ipi-_T}) and (\ref{eqn:f_Ipi+_T}), $x_i$ varies in the interval $[-m,m]$ if the block $B_i$ is a same-parity block and varies in $[-\lambda,1]$ if the block $B_i$ is a different-parity block. %\red{Note that for $\lambda=0$, $f_{I}^{-}(\pi)= f_{I}^{+}(\pi)=0$.}

Now we define the integrals associated with partitions in $\PP_{2,4}(2k_1,2k_2)$. For $\pi=\{V_{1},\ldots,V_{k_1+k_2-1}\}\in
\mathcal{P}_{2,4}(2k_1,2k_2)$ (denoting the block with four elements by
$V_{i}$), we define the map $\tau_{\pi}:[2(k_1+k+2)] \rightarrow \{\pm 1\}$ in the following way: for $V_j=\{r,s\}$ with $r<s$, we define
	\begin{eqnarray}\label{eqn:tau map}
		 \tau_{\pi}(r)=1 \text{ and } 
		 \tau_{\pi}(s)=(-1)^{r+s+1}.
	\end{eqnarray}
	 While for $V_i=\{r_1,s_1,r_2,s_2\}$ with $r_1<s_1<r_2<s_2$, we define
	\begin{eqnarray*}
		 \tau_{\pi}(k)=
		\begin{cases} 1 \ \
			& \text{ if } k =r_1 \ \text{or } r_2,\\
			(-1)^{r_i+s_i+1} \ \ & \text{ if } k=s_1 \text{ or } s_2.
		\end{cases}
	\end{eqnarray*}
	For every partition $\pi \in \mathcal{P}_{2,4}(2k_1,2k_2)$, we
	construct a projective relation between two groups of unknowns
${y_{1},\ldots,y_{2k_1+2k_2}}$ and ${x_{1},\ldots,x_{k_1+k_2-1}}$  as
	follows:
	\begin{equation*}
		\tau_{\pi}(i)\,y_{i}=\tau_{\pi}(j)\,y_{j}=x_{\pi(i)}
	\end{equation*}
	 whenever $i\thicksim_{\pi} j$. Then for every choice of $x_1,x_2, \ldots , x_{k_1+k_2-1}, $ the equations  $\sum_{j=1}^{2k_1}(-1)^jy_{j}=0$ and $\sum_{j=2k_1+1}^{2k_1+2k_2}(-1)^jy_{j}=0$ are always satisfied. For a partition $\pi \in \PP_{2,4}(2k_1,2k_2)$ and $\lambda \in \R^+$,  we define the following integrals:
	\begin{align} 
		f_{II}^{-}(\pi) &=
		 \int_{[0,1]^{2}\times
			[-m,m]^{|SP(\pi)|} \times[-\lambda,1]^{|DP(\pi)|}} 
			\prod_{j=1}^{k_1}\chi_{[0,\lambda]}(x_{0}+\sum_{i=1}^{2j-1} y_{i}) \,\,\chi_{[0,1]}(x_{0}+\sum_{i=1}^{2j} y_{i}) \nonumber \\
		& \quad \times \prod_{j'=k_1+1}^{k_1+k_2} \chi_{[0,\lambda]}(y_{0}+\sum_{i=2k_1+1}^{2j'-1} (-1)^i y_{i})  \,\,\chi_{[0,1]}(y_{0}+\sum_{i=2k_1+1}^{2j'} (-1)^i y_{i}) \,d\,y_{0}
		\prod_{l=0}^{k_1+k_2-1} d\, x_{l}, \label{eqn:f_II-pi_T}  \\
%	\end{align} 
%and
%	\begin{align}
		 f_{II}^{+}(\pi) &=
		 \int_{[0,1]^{2}\times
		 [-m,m]^{|SP(\pi)|} \times[-\lambda,1]^{|DP(\pi)|}} 
		 	\prod_{j=1}^{k_1}\chi_{[0,\lambda]}(x_{0}+\sum_{i=1}^{2j-1} y_{i}) \,\,\chi_{[0,1]}(x_{0}+\sum_{i=1}^{2j} y_{i}) \nonumber \\
		 & \quad \times \prod_{j'=k_1+1}^{k_1+k_2} \chi_{[0,\lambda]}(y_{0}-\sum_{i=2k_1+1}^{2j'-1} (-1)^i y_{i})  \,\,\chi_{[0,1]}(y_{0}-\sum_{i=2k_1+1}^{2j'} (-1)^i y_{i})
		 \,d\,y_{0} \prod_{l=0}^{k_1+k_2-1} d\, x_{l}. \label{eqn:f_II+pi_T}
	\end{align}
  where $m=\min\{\lambda,1\}$ and $SP(\pi), DP(\pi)$ are as in Definition \ref{def:SP+DP_partation}.

Note that for $\pi \in \mathcal{DP}_2(2k_1,2k_2))$, the function $\epsilon_{\pi}$ in (\ref{eqn: epsilon map}) is the constant function taking value 1. Thus the set of equations (\ref{eqn:relation y_i,x_i}) become $y_i=x_{\pi(i)}$ for all $i$. Furthermore, note that in this case $|SP(\pi)|=0$ and $|DP(\pi)|=k_1+k_2$ and therefore the domain of integration for $f_I^-(\pi)$ and $f_{I}^+(\pi)$ is $[0,1]^{2} \times[-\lambda,1]^{k_1+k_2}$. Similar argument implies that for $f_{II}^-(\pi)$ and $f_{II}^+(\pi)$, the domain of integration is $[0,1]^{2} \times[-\lambda,1]^{k_1+k_2-1}$.
                                                                          
 Now we derive the trace formula for the product of the matrices of form $T_{n\times p} T_{n \times p}'$, which is required to prove Theorem \ref{thm:LES_TT'} and Theorem \ref{thm:LES_Tp}.
\begin{lemma} \label{lem:trac_TT^t}
	Suppose  $T^{(r)}_{n\times p}$ are $n\times p$ Toeplitz matrices with input sequence $\{ a^{r}_i\}_{ i \in \mathbb{Z} }$ for $r=1,2, \ldots, k$. Let $X^{(r)}=T^{(r)}_{n\times p} (T^{(r)}_{n \times p})'$. Then
	\begin{align} \label{def:trace_Tn}
		\Tr( X^{(k)} \cdots X^{(1)})
		= \sum_{i=1}^{n} \sum_{j_{1}, \ldots, j_{2k}=-(p-1)}^{n-1} \prod_{r=1}^{k} a^{(r)}_{j_{2r-1}} a^{(r)}_{j_{2r}}  I(i,J,p,n)\delta_{0} (\sum_{q=1}^{2k}(-1)^{q} j_{q}),
%= \sum_{i=1}^{n} \sum_{A_k} \prod_{r=1}^{2k} a_{j_{r}}   I_J(i,p) I_J(i,n),
	\end{align}
	where $J=(j_1,j_2,\ldots,j_{2k})$, $\delta_x$ is the Dirac delta function at $x$ and
	\begin{align} \label{eqn:I(i,j,p,n)}
		I(i,J,p,n) = \prod_{s=1}^{k} \chi_{[1,n]}(i+\sum_{\ell=1}^{2s-1} (-1)^\ell j_\ell)  \chi_{[1,p]}(i+\sum_{\ell=1}^{2s} (-1)^\ell j_\ell ).
	%	I(i,J,p,n) = \prod_{s=1, \ s \mbox{ odd}}^{2k} \chi_{[1,p]}(i+\sum_{\ell=1}^{s} (-1)^\ell j_\ell ) \times \prod_{s=1, \ s \mbox{ even}}^{2k} \chi_{[1,n]}(i+\sum_{\ell=1}^{s} (-1)^\ell j_\ell ),
	\end{align}
%	\begin{align} \label{eqn:I(i,j,p,n)}
%	I_J(i,p) = \prod_{s=1, \ s \mbox{ odd}}^{2k} \chi_{[1,p]}(i+\sum_{\ell=1}^{s} (-1)^\ell j_\ell ), \
%	I_J(i,n) = \prod_{s=1, \ s \mbox{ even}}^{2k} \chi_{[1,n]}(i+\sum_{\ell=1}^{s} (-1)^\ell j_\ell ),
%\end{align}
%with $\chi$ as the indicator function and
\end{lemma}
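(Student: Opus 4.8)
The plan is to expand the trace directly and then perform a change of variables from matrix indices to the difference variables $j_\ell$. First I would use the fact that each $X^{(r)}=T^{(r)}_{n\times p}(T^{(r)}_{n\times p})'$ is symmetric, together with cyclic invariance of the trace (so that $\Tr(X^{(k)}\cdots X^{(1)})=\Tr\big((X^{(k)}\cdots X^{(1)})'\big)=\Tr(X^{(1)}\cdots X^{(k)})$), to write
\[
\Tr(X^{(k)}\cdots X^{(1)}) = \Tr\big(T^{(1)}(T^{(1)})'\,T^{(2)}(T^{(2)})'\cdots T^{(k)}(T^{(k)})'\big).
\]
The right-hand side is a chain of $2k$ factors alternating between an $n\times p$ block $T^{(r)}$ and a $p\times n$ block $(T^{(r)})'$. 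Expanding the trace introduces $2k$ summation indices $v_1,\ldots,v_{2k}$ arranged cyclically (with the convention $v_{2k+1}:=v_1$), where the odd-indexed $v$'s are row indices running over $[1,n]$ and the even-indexed $v$'s are column indices running over $[1,p]$.

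Next I would insert the Toeplitz entries. Since the $(a,b)$ entry of $T^{(r)}$ is $a^{(r)}_{a-b}$, the factor coming from $T^{(r)}$ (at chain position $2r-1$) is $a^{(r)}_{v_{2r-1}-v_{2r}}$ and the factor from $(T^{(r)})'$ (at position $2r$) is $a^{(r)}_{v_{2r+1}-v_{2r}}$. Setting $j_{2r-1}:=v_{2r-1}-v_{2r}$ and $j_{2r}:=v_{2r+1}-v_{2r}$ reproduces exactly the product $\prod_{r=1}^{k} a^{(r)}_{j_{2r-1}}a^{(r)}_{j_{2r}}$. Moreover each $j_\ell$ is a difference of a row index and a column index, hence ranges over $\{-(p-1),\ldots,n-1\}$, matching the summation limits in the statement.

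The heart of the argument is the change of variables $(v_1,\ldots,v_{2k})\mapsto(i,j_1,\ldots,j_{2k})$ with $i:=v_1$. A one-line telescoping check gives $(-1)^\ell j_\ell = v_{\ell+1}-v_\ell$ for every $\ell$, so that
\[
v_{m+1} = i + \sum_{\ell=1}^{m}(-1)^\ell j_\ell \qquad (1\le m\le 2k).
\]
The cyclic closure $v_{2k+1}=v_1$ then becomes $\sum_{\ell=1}^{2k}(-1)^\ell j_\ell=0$, which (all quantities being integers) is precisely the Kronecker/Dirac factor $\delta_0\big(\sum_{q=1}^{2k}(-1)^q j_q\big)$, and is also what makes the map a bijection onto the constrained index set. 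Finally, the requirement that each intermediate index $v_{m+1}$ lie in its admissible range — $[1,n]$ when it is a row index, $[1,p]$ when it is a column index — translates through the displayed identity into indicator functions $\chi_{[1,n]}$ and $\chi_{[1,p]}$ evaluated at the partial sums $i+\sum_{\ell=1}^{m}(-1)^\ell j_\ell$; collecting the odd-$m$ and even-$m$ constraints yields precisely the factor $I(i,J,p,n)$. Substituting everything back and summing over the free variables $i,j_1,\ldots,j_{2k}$ produces the asserted formula.

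I expect the only genuine obstacle to be bookkeeping rather than any conceptual difficulty: keeping straight which partial sum is a row index versus a column index (hence whether the matching indicator is $\chi_{[1,n]}$ or $\chi_{[1,p]}$), verifying the telescoping sign pattern $(-1)^\ell j_\ell=v_{\ell+1}-v_\ell$, and confirming that the parametrization $(v_1,\dots,v_{2k})\leftrightarrow(i,j_1,\dots,j_{2k})$ restricted by the alternating-sum constraint is a genuine bijection so that no configuration is double-counted or dropped. Once these are pinned down, the identity for $v_{m+1}$ feeds directly into both the Toeplitz subscripts and the range constraints, and the computation closes.
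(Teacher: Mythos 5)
Your proposal is correct and is in substance the same argument as the paper's: the paper computes $(X^{(k)}\cdots X^{(1)})e_i$ by iterating the actions of $(T^{(r)})'$ and $T^{(r)}$ on basis vectors, which is exactly your expansion of the trace over a closed chain of indices $v_1,\ldots,v_{2k}$ followed by the change of variables $(v_1,\ldots,v_{2k})\mapsto(i,j_1,\ldots,j_{2k})$; the telescoping identity, the $\delta_0$ factor from cyclic closure, and the bijection observation appear identically in both.

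One remark, on exactly the bookkeeping point you flagged as the only obstacle: with your (correct) conventions --- odd-indexed $v$'s are row indices in $[1,n]$, even-indexed $v$'s are column indices in $[1,p]$ --- the constraint on $v_{m+1}$ for odd $m$ is membership in $[1,p]$, so the odd partial sums $i+\sum_{\ell=1}^{2s-1}(-1)^\ell j_\ell$ acquire $\chi_{[1,p]}$ and the even ones acquire $\chi_{[1,n]}$. That is the displayed $I(i,J,p,n)$ of (\ref{eqn:I(i,j,p,n)}) with $n$ and $p$ interchanged, so your last step does not literally produce the stated factor. The discrepancy is a typo in the paper, not a gap in your argument: the paper's own intermediate computations (e.g.\ the factor $\chi_{[1,p]}(i-j_1)\,\chi_{[1,n]}(i-j_1+j_2)$ obtained for $T^{(1)}(T^{(1)})'e_i$) and its subsequent use of the trace formula in (\ref{eqn: E Riemann}) both carry your assignment of the indicators; only the final display (\ref{eqn:(TT^t)^ke_i}) and the statement of $I(i,J,p,n)$ have the two indicators swapped. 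Your proof is right once this mismatch is noted explicitly rather than asserted away by claiming the constraints match the stated $I$ verbatim.
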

%\begin{remark}
%	Note that the trace formula of Lemma \ref{lem:trac_TT^t} can also be written as
%	\begin{align} \label{def:trace_Tn_A}
%		\Tr( X^{(k)} \cdots X^{(1)})
%		= \sum_{i=1}^{n} \sum_{A_k}  \prod_{r=1}^{k} a^{(r)}_{j_{2r-1}} a^{(r)}_{j_{2r}}  I(i,J,p,n),
%	\end{align}
%where $I(i,J,p,n)$ is as in (\ref{eqn:I(i,j,p,n)}) and 
%\begin{align}\label{eqn:A_k}
%	A_k= \big\{(j_1,\ldots, j_{2k}) \in \{-(p-1),\ldots, -1, 0, 1,\ldots, (n-1)\}^{2k} : \sum_{q=1}^{2k}(-1)^{q} j_{q}=0 \big\}.
%\end{align}
%\end{remark}
\begin{proof}[Proof of Lemma \ref{lem:trac_TT^t}]
	First note from the structure of Toeplitz matrix that 
	\begin{align*}% \label{eqn:T+F+G}
		T^{(1)}_{n \times p}= \sum_{j=0}^{n-1} a^{(1)}_{j} F_{n \times p}{(j)} + \sum_{j=1}^{p-1} a^{(1)}_{-j} G_{n \times p}{(j)},
	\end{align*}
where for $j=0,1, \ldots, (n-1)$,
\begin{align*} %\label{eqn:Fe_i}
	 F_{n \times p}{(j)} e_i &= 
		\begin{cases}
		e_{i+j} &  \mbox{if } 1 \leq (i+j) \leq n,\\
		0 &\mbox{ otherwise},
	\end{cases} \nonumber \\
&= \chi_{[1, n]}(i+j) e_{i+j},
\end{align*}
	and for $j=1, \ldots, (p-1)$,
	\begin{align*} %\label{eqn:Ge_i}
		G_{n \times p}{(j)} e_i &= 
		\begin{cases}
			e_{i-j} &  \mbox{if } 1 \leq (i-j) \leq n,\\
			0 &\mbox{ otherwise},
		\end{cases}  \nonumber \\
	&= \chi_{[1, n]}(i-j) e_{i-j}.
	\end{align*}
Thus  for $i=1,2, \ldots,n$, we have
\begin{align*} %\label{eqn:Te_i}
	(T^{(1)}_{n \times p})e_i= \sum_{j=0}^{n-1} a^{(1)}_{j}\chi_{[1, n]}(i+j) e_{i+j} + \sum_{j=1}^{p-1} a^{(1)}_{-j} \chi_{[1, n]}(i-j) e_{i-j} = \sum_{j=-(p-1)}^{n-1} a^{(1)}_{j}\chi_{[1, n]}(i+j) e_{i+j}.
\end{align*}
Similarly, we can also show that
\begin{align*}% \label{eqn:T^te_i}
	(T^{(1)}_{n \times p})' e_i= \sum_{j=0}^{p-1} a^{(1)}_{-j}\chi_{[1, p]}(i+j) e_{i+j} + \sum_{j=1}^{n-1} a^{(1)}_{j} \chi_{[1, p]}(i-j) e_{i-j} = \sum_{j=-(p-1)}^{n-1} a^{(1)}_{j}\chi_{[1, p]}(i-j) e_{i-j}.
\end{align*}
Now from the above two expressions, for $i=1,2, \ldots,n$, we have
\begin{align*}% \label{eqn:TT^te_i}
	\big(T^{(1)}_{n \times p} (T^{(1)}_{n \times p})' \big)e_i &= T^{(1)}_{n \times p} \Big[\sum_{j_1=-(p-1)}^{n-1} a^{(1)}_{j_1}\chi_{[1, p]}(i-j_1) e_{i-j_1}\Big] \nonumber \\
	&= \sum_{j_1=-(p-1)}^{n-1} a^{(1)}_{j_1}\chi_{[1, p]}(i-j_1) \Big[ \sum_{j_2=-(p-1)}^{n-1} a^{(1)}_{j_2}\chi_{[1,n]}(i-j_1+j_2) e_{i-j_1+j_2}\Big] \nonumber \\
	&= \sum_{j_1, j_2=-(p-1)}^{n-1} a^{(1)}_{j_1}  a^{(1)}_{j_2} \chi_{[1, p]}(i-j_1) \chi_{[1,n]}(i-j_1+j_2) e_{i-j_1+j_2}.
\end{align*}
Similarly, for $i=1,2, \ldots,n$ and $X^{(r)}=T^{(r)}_{n\times p} (T^{(r)}_{n \times p})'$, we have
\begin{align*} %\label{eqn:(TT^t)^2e_i}
	( X^{(2)}_n X^{(1)}_{n})e_i &=
	 \sum_{j_1, j_2=-(p-1)}^{n-1} a^{(1)}_{j_1}  a^{(1)}_{j_2} \chi_{[1, p]}(i-j_1) \chi_{[1,n]}(i-j_1+j_2) 	( X^{(2)}_n) e_{i-j_1+j_2}  \nonumber \\ 
	 &= \sum_{j_1, \ldots, j_4=-(p-1)}^{n-1} \prod_{r=1}^{2}  a^{(r)}_{j_{2r-1}} a^{(r)}_{j_{2r}} \chi_{[1, p]}(i-j_1) \chi_{[1,n]}(i-j_1+j_2)  \nonumber \\ 
	& \qquad \times \chi_{[1,p]}(i-j_1+j_2-j_3) \chi_{[1,n]}(i-j_1+j_2-j_3+j_4) e_{i-j_1+j_2-j_3+j_4}.
\end{align*}
Continuing this process, we get
\begin{align} \label{eqn:(TT^t)^ke_i}
	( X^{(k)} \cdots X^{(1)})e_i &=
 \sum_{j_{1}, \ldots, j_{2k}=-(p-1)}^{n-1} \prod_{r=1}^{k} a^{(r)}_{j_{2r-1}} a^{(r)}_{j_{2r}}  \prod_{s=1}^{k} \chi_{[1,n]}(i+\sum_{\ell=1}^{2s-1} (-1)^\ell j_\ell)   \nonumber \\ 
	& \qquad  \times \chi_{[1,p]}(i+\sum_{\ell=1}^{2s} (-1)^\ell j_\ell ) e_{i-\sum_{q=1}^{2k}(-1)^{q} j_{q}}.
\end{align}
Finally, using the fact that $\Tr( X^{(k)} \cdots X^{(1)}) = 	\sum_{i=1}^{n} e'_i ( X^{(k)} \cdots X^{(1)})e_i$, we get  (\ref{def:trace_Tn}) from (\ref{eqn:(TT^t)^ke_i}). This completes the proof of Lemma \ref{lem:trac_TT^t}.
\end{proof}

\section{ \textbf{ \large \bf Symmetric Toeplitz with independent entries}} \label{sec:symToep_ind}  
In this section, we deal with $T_{n\times p} T'_{n \times p}$ when $T_{n\times p}$ is symmetric Toeplitz matrix with  independent entries. In Section \ref{subsec:momentseq_symTT'}, we compute the limiting moments for the trace of $\frac{1}{n}T_{n\times p} T'_{n \times p}$ and in Section \ref{sec:proof_thm_TT'}, we study the linear eigenvalue statistics of  $T_{n\times p} T'_{n \times p}$ when the diagonal entries are zero (Theorem \ref{thm:LES_TT'}). For non-zero diagonal entries, the linear eigenvalue statistics for  $T_{n\times p} T'_{n \times p}$ is studied in Section \ref{sec:proof_thm_TT'_nonzero}.

\subsection{\large \bf Limiting moment sequence} \label{subsec:momentseq_symTT'} In this section, we find the limiting moments of $T_{n \times p}T_{n \times p}^\prime$.
Note that the $r$-th moment of the \textit{empirical spectral distribution} of $T_{n\times p} T'_{n \times p}$ can be written as $\E\left[\frac{1}{n} \Tr( T_{n\times p} T'_{n \times p})^r\right]$. In \cite{bose+gango+sen_XX'_10} the limit of this quantities were calculated for establishing the limiting spectral distribution for $T_{n \times p}T_{n \times p}^\prime$, but a closed form of the sequence was not obtained. Here, our following theorem provides an explicit form of the limit in terms of an integral. Later in Section \ref{sec:proof_thm_TT'_nonzero}, this theorem will also be used to study  the linear eigenvalue statistics of  $T_{n\times p} T'_{n \times p}$ when the diagonal entries of $T_{n\times p}$ are non-zero and in Section \ref{sec:schatten-norm}, this theorem shall be used to find the asymptotic distribution of Schatten norm of $T_{n \times p}$.
%In this section, we compute the limiting moments for $w_p$, when $T_{n \times p}$ is the rectangular version of the symmetric Toeplitz matrix.
\begin{theorem}\label{thm: Moment TT^t}
%Suppose  $T_{n\times p}$ are $n\times p$ Toeplitz matrices with input sequence $\{ a_i\}_{ i \in \mathbb{Z} }$ obeying $(\ref{eqn:condition})$ and such that $a_{-j}=a_{j}$ for all $j \geq 1$ and $a_0 \equiv 0$. Let $X=T_{n\times p} (T_{n \times p})^t$. Assumption I
 Let $\{a_i\}_{i \in \N}$ be a sequence of random variables which satisfy Assumption I with $a_0 \equiv 0$. Suppose $T_{n \times p}$ is the $n \times p$ symmetric Toeplitz matrix with input sequence $\{\frac{a_i}{\sqrt{n}}\}_{i \in \N}$ and let $X=T_{n\times p} T'_{n \times p}$. Then for each $r \geq 1$ and $\lambda \in (0, \infty)$, $\E\left[\frac{1}{n} \Tr X^{r}\right]=M_{ r}+o(1/\sqrt{n})$ as $p/n \rightarrow \lambda$, $n \rightarrow \infty$, where
\begin{align}\label{eqn:2r-Moment_symTnp}
	M_{r}&=\sum_{\pi \in \mathcal{P}_2(2 r)} \int_{[0,1] \times [-m,m]^{|SP(\pi)|} \times[-\lambda,1]^{|DP(\pi)|}} \prod_{s=1}^{r} \chi_{[0,\lambda]}\left(x_0+ \sum_{i=1}^{2s-1} \epsilon_\pi(i) x_{\pi(i)}\right) \nonumber \\
	& \qquad \times \chi_{[0,1]}\left( x_0+ \sum_{i=1}^{2s} \epsilon_\pi(i) x_{\pi(i)}\right)  \prod_{l=0}^{r} \mathrm{~d} x_l,
\end{align}
with $m =\min \{\lambda,1\}$ and $SP(\pi), DP(\pi)$ are as in Definition \ref{def:SP+DP_partation}. 
\end{theorem}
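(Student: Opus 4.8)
The plan is to evaluate the lattice sum produced by Lemma~\ref{lem:trac_TT^t}. Specializing (\ref{def:trace_Tn}) to $T^{(1)}=\cdots=T^{(r)}=T_{n\times p}$ with input $\{a_i/\sqrt n\}$ and pulling out the $2r$ factors of $1/\sqrt n$,
\begin{equation*}
\E\Big[\tfrac1n\Tr X^r\Big] = \frac{1}{n^{r+1}}\sum_{i=1}^n\ \sum_{j_1,\dots,j_{2r}=-(p-1)}^{n-1} \E\Big[\prod_{q=1}^{r} a_{j_{2q-1}}a_{j_{2q}}\Big]\, I(i,J,p,n)\,\delta_0\Big(\sum_{q=1}^{2r}(-1)^q j_q\Big),
\end{equation*}
with $I$ as in (\ref{eqn:I(i,j,p,n)}). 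Since $a_{-i}=a_i$ and the $\{a_i\}_{i\ge 0}$ are independent with mean $0$ and variance $1$, the factor $\E[\prod_q a_{j_q}]$ vanishes unless the multiset $\{|j_1|,\dots,|j_{2r}|\}$ pairs up, which induces a partition of $[2r]$ grouping equal $|j_q|$.

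I would then separate pair-partitions from coarser ones. A partition with a block of size $\ge 3$ frees at most $r-1$ independent magnitudes, so together with the row index $i$ it produces $O(n^r)$ surviving configurations, i.e.\ $O(1/n)$ after the $n^{-(r+1)}$ normalization; the boundedness $\sup_i\E|a_i|^k=\alpha_k<\infty$ keeps the weights finite. Thus only $\pi\in\mathcal P_2(2r)$ matter, each pair $\{s,t\}$ forcing $j_t=\pm j_s$ with weight $\E[a_{|j_s|}^2]=1$. The sign within each pair is then fixed by the Dirac constraint: the pair's contribution to $\sum_q(-1)^qj_q$ is $[(-1)^s\sigma+(-1)^t]j_t$ when $j_s=\sigma j_t$, which vanishes exactly for $\sigma=-1$ on same-parity pairs and $\sigma=+1$ on different-parity pairs — precisely the rule built into $\epsilon_\pi$ in (\ref{eqn: epsilon map}) (cf.\ the parity remark following it). With these signs the delta is identically satisfied, so the $r$ pair-magnitudes and $i$ vary freely and the order is the full $n^{r+1}$; any other sign assignment turns the delta into a genuine linear relation (order $n^r$, hence $O(1/n)$), and configurations forcing some index to $0$ drop out because $a_0\equiv0$.

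It remains to pass to the integral. Writing $x_0=i/n$ and letting each pair's signed magnitude be $x_{\pi(\cdot)}=\pm j/n$ in accordance with $\epsilon_\pi$, every partial sum $i+\sum_{\ell=1}^{2s-1}(-1)^\ell j_\ell$ and $i+\sum_{\ell=1}^{2s}(-1)^\ell j_\ell$ becomes, after division by $n$, $x_0+\sum_\ell\epsilon_\pi(\ell)x_{\pi(\ell)}$, so the factors $\chi_{[1,n]},\chi_{[1,p]}$ of $I$ turn into $\chi_{[0,1]},\chi_{[0,\lambda]}$ (using $p/n\to\lambda$). The admissible range of a magnitude is set by which indicators it enters: a same-parity pair, whose two occurrences cancel except on a sub-interval, lives in $[-m,m]$ with $m=\min\{\lambda,1\}$, while a different-parity pair lives in $[-\lambda,1]$, matching the domains in (\ref{eqn:2r-Moment_symTnp}). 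The normalized sum is then a Riemann sum for $M_r$, and since the integrand is a product of indicators (bounded, piecewise constant with discontinuity set of codimension one), the discretization error is $O(1/n)$. Adding the $O(1/n)$ combinatorial and Riemann errors yields $\E[\tfrac1n\Tr X^r]=M_r+O(1/n)=M_r+o(1/\sqrt n)$.

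The hardest part is the parity/sign bookkeeping together with the stated rate: one must verify that the $\epsilon_\pi$-convention is exactly the one making the Dirac constraint automatic, and — to get $o(1/\sqrt n)$ rather than mere $o(1)$ — that every discarded family (size-$\ge3$ blocks, non-automatic signs, vanishing-index terms, and lattice-boundary effects in the Riemann sum) is genuinely $O(1/n)$, so that no $n^{-1/2}$ term survives. Pinning down the magnitude ranges $[-m,m]$ versus $[-\lambda,1]$ from the solvability of the indicator constraints as $x_0$ ranges over $[0,1]$ is the other place where care is needed.
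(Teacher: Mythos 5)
Your proposal is correct and follows essentially the same route as the paper's proof: the trace formula of Lemma \ref{lem:trac_TT^t}, reduction to pair partitions by the moment/degree-of-freedom count (larger blocks giving $O(1/n)$), the parity-dependent sign rule under which the Dirac constraint is automatically satisfied (wrong-sign configurations again costing a factor $1/n$), and identification of the surviving normalized sum as a Riemann sum for $M_r$. If anything, you are more explicit than the paper on two points: the $O(1/n)$ bound on the Riemann-sum discretization error, which is genuinely needed to reach the stated $o(1/\sqrt{n})$ rate, and the assignment of same-parity blocks to the range $[-m,m]$ and different-parity blocks to $[-\lambda,1]$, which matches the theorem statement and the definition of $\epsilon_\pi$ in (\ref{eqn: epsilon map}) (the paper's proof prose inadvertently swaps the two when describing $j_u=\pm j_v$, a typo your bookkeeping avoids).
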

\begin{proof}
First note from Lemma \ref{lem:trac_TT^t} that 
	\begin{equation}\label{eqn:E1/n tr X eq1}
			\E\left[\frac{1}{n} \Tr\left(X^{ r}\right)\right]=\frac{1}{n^{r+1}} \sum_{i=1}^n \sum_{j_{1}, \ldots, j_{2r}= -(p-1)}^{(n-1)}  \E\left[a_J\right] I(i,J,p,n) \delta_{0} (\sum_{q=1}^{2k}(-1)^{q} j_{q}),
	\end{equation}
where $a_J=\prod_{u=1}^{2r} a_{j_u}$ and $I(i,J,p,n)$ is as in (\ref{eqn:I(i,j,p,n)}).
%\begin{equation}\label{eqn:J_2r_0}
%\red{\mathcal{J}^0_{2r}= \{(j_{1}, \ldots, j_{2r}) \in \Z^{2r} : -(p-1) \leq j_q \leq (n-1) \mbox{ and } j_q \neq 0 \}.  }
%\end{equation} 

Observe that if for some vector $J=(j_1,j_2,\ldots, j_{2r})$, there exists a component $j_u$ such that $|j_u| \neq |j_v|$ for all $v \neq u$, then it follows from  Assumption I that $\E [a_J]=0$. Furthermore since $a_0 \equiv 0$, if $j_u=0$ for some $u$, then $\E[a_J] =0$. Therefore, (\ref{eqn:E1/n tr X eq1}) can be written as
\begin{equation}\label{eqn:E1/n tr X eq2}
\E\left[\frac{1}{n} \Tr\left(X^{ r}\right)\right]=\frac{1}{n^{r+1}} \sum_{i=1}^n \sum_{\pi \in \widehat{\mathcal{P}}(2r)}\sum_{J \in \Pi(\pi)}  \E\left[a_J\right] I(i,J,p,n) \delta_{0} (\sum_{q=1}^{2r}(-1)^{q} j_{q}),
\end{equation} 
where $\widehat{\mathcal{P}}(2r)$ is the set of all partitions of $[2r]$ such that each block has size greater than or equal to two and for a partition $\pi$, $\Pi(\pi)$ is the set of all vectors $J \in \{-(p-1),-(p-2), \ldots,-1,1,\ldots , (n-1)\}^{2r}$ such that $u \sim_{\pi} v$ if and only if $|j_u|=|j_v|$.

Note that for a fixed $\pi$, the number of choices for $J \in \Pi(\pi)$ is of the order $O(n^{|\pi|})$, where $|\pi|$ is the number of blocks of $\pi$. Consider $\pi \in \widehat{\mathcal{P}}(2r)$ such that there exist a block $B_i$ of size strictly greater than two, then it follows that the number of blocks of $\pi$ is less than or equal to $(r-1)$. Furthermore, since $\{a_i\}$ satisfies Assumption I, $\E\left[a_J\right] $ is constant for all $J \in \Pi(\pi)$ and is also bounded above by a constant independent of $J$. Thus the contribution of such terms to (\ref{eqn:E1/n tr X eq2}) is $O(\frac{1}{n^{r+1}} \times n \times n^{r-1})=o(\sqrt{n})$. This implies that for each $r$, a non-zero contribution in the limit arises due to pair-partitions of $[2r]$. Thus we have,
\begin{equation}\label{eqn:E1/n tr X^2r}
	\E\left[\frac{1}{n} \Tr\left(X^{r}\right)\right]=\frac{1}{n^{r+1}} \sum_{i=1}^n \sum_{\pi \in \mathcal{P}_2(2r)}\sum_{J \in \Pi(\pi)}  \E\left[a_J\right] I(i,J,p,n) \delta_{0} (\sum_{q=1}^{2r}(-1)^{q} j_{q}) + o\left(\frac{1}{\sqrt{n}}\right).
\end{equation} 
For $\pi \in \mathcal{P}_2(2r)$, consider the sets
\begin{align*}
	&\Pi_1(\pi)=\{ J \in \Pi(\pi): \text{ for all } u\sim_{\pi}v, j_u=\epsilon_{\pi}(v)j_v\} \text{ and }\\
	&\Pi_2(\pi)= \Pi(\pi) \setminus \Pi_1(\pi).
\end{align*}
For $\pi \in \mathcal{P}_2(2r)$ and fixed $1 \leq u \neq v \leq 2r$ such that $u \sim_{\pi} v$, consider the subset $\widetilde{\Pi}_2(\pi,u,v)=\{J \in \Pi_2(\pi): j_u \neq \epsilon_{\pi}j_v\}$. For $J \in \widetilde{\Pi}_2(\pi,u,v)$, note that the definition of $\epsilon_\pi$ implies that $(-1)^uj_u=(-)^vj_v$ and consequently the summation $\sum_{q=1}^{2r}(-1)^{q} j_{q}$ is zero if and only if $2(-1)^uj_u= \sum_{q=1 \atop q \neq u,v}^{2r}(-1)^{q} j_{q}$. Hence, once all other $j_q$ are chosen, the value of $j_u$ for which (\ref{eqn:E1/n tr X^2r}) is non-zero, is uniquely determined. Since $\cup_{u \sim_{\pi} v} \widetilde{\Pi}_2(\pi,u,v)=\Pi_2(\pi)$, it follows that the number of choices for $J \in \Pi_2(\pi)$, such that (\ref{eqn:E1/n tr X^2r}) is non-zero is at most $O(n^{r-1})$.

Furthermore, from the definition of $\epsilon_\pi$, it follows that for each $J \in \Pi_1(\pi)$, $\sum_{q=1}^{4r}(-1)^{q} j_{q}$ is always zero. Hence, we get that 
\begin{equation} \label{eqn:E1/n tr X eq3}
	\E\left[\frac{1}{n} \Tr\left(X^{r}\right)\right]=\frac{1}{n^{r+1}} \sum_{i=1}^n \sum_{\pi \in \mathcal{P}_2(2r)}\sum_{J \in \Pi_1(\pi)} I(i,J,p,n) +o\left(\frac{1}{\sqrt{n}}\right).
\end{equation} 
Suppose a block $B_j= \{u,v\}$ of $\pi$ is a same-parity block. Then for each $J \in \Pi_1(\pi)$, from the definition of $\Pi_1(\pi)$, it follows that $j_u=j_v$. And if $B_j$ is a different-parity block, then we get $j_u=-j_v$ and since $-(p-1) \leq j_u,j_v \leq (n-1)$, it follows that $-M \leq j_u \leq M$, where $M = \min \{p-1,n-1\}$. Without loss of generality, we use $j_1,j_2,\ldots,j_{|SP(\pi|)}$ to denote the elements of the same-parity blocks and  $j_{|SP(\pi)|+1},\ldots,j_{r}$ to denote the elements of the different-parity blocks. Substituting the expression for $I(i,J,p,n)$, 
%for each $\pi \in \mathcal{P}_2(2r)$, we have that the contribution to $\E[\frac{1}{n}\Tr X^{2r}]$ will be
\begin{align} \label{eqn: E Riemann}
	 \E[\frac{1}{n}\Tr( X^{r})]&= \hspace{-2mm} \sum_{\pi \in \PP_{2}(2r)}  \frac{1}{n^{r+1}} \sum_{i=1}^n \sum_{j_1,\ldots,j_{r}}\prod_{s=1}^{r} \chi_{[1,p]}(i+\sum_{\ell=1}^{2s-1} (-1)^\ell j_\ell )\,\, \chi_{[1,n]}(i+\sum_{\ell=1}^{2s} (-1)^\ell j_\ell )+o\left(\frac{1}{\sqrt{n}}\right)\nonumber\\
	&= \hspace{-2mm} \sum_{\pi \in \PP_{2}(2r)} \hspace{-1.5mm}\frac{1}{n^{r+1}} \sum_{i=1}^n \sum_{j_1,\ldots,j_{r}}\prod_{s=1}^{r} \chi_{[\frac{1}{n}\frac{p}{n}]}(i+\sum_{\ell=1}^{2s-1} (-1)^\ell j_\ell )\,\, \chi_{[\frac{1}{n},1]}(i+\sum_{\ell=1}^{2s} (-1)^\ell j_\ell )+o\left(\frac{1}{\sqrt{n}}\right),
\end{align} 
where $j_1,j_2,\ldots,j_{|SP(\pi)|}$ varies between $-(p-1)$ and $(n-1)$, and $j_{|SP(\pi)|+1},\ldots,j_{r}$ varies between $-M$ and $M$. Now, note that as $p,n \rightarrow \infty$ and $p/n \rightarrow \lambda$, the leading term in (\ref{eqn: E Riemann}) corresponding to $\pi \in \PP_{2}(2r)$ is the Riemann sum of the following integral
\begin{align*}
	\int_{[0,1] \times [-m,m]^{|SP(\pi)|} \times[-\lambda,1]^{|DP(\pi)|}} \prod_{s=1}^{r} \chi_{[0,\lambda]}\left(x_0+ \sum_{i=1}^{2s-1} \epsilon_\pi(i) x_{\pi(i)}\right)\chi_{[0,1]}\left( x_0+ \sum_{i=1}^{2s} \epsilon_\pi(i) x_{\pi(i)}\right)  \prod_{l=0}^{r} \mathrm{~d} x_l,
\end{align*} 
where $m =\min \{\lambda,1\}$ and $SP(\pi), DP(\pi)$ are as in Definition \ref{def:SP+DP_partation}. Therefore $\E[\frac{1}{n}\Tr( X^{r})]$ converges to $M_{r}$ for all $r \geq 1$ and this completes the proof.
\end{proof}
\begin{remark}\label{rem: Mr >0}
Note that the function under the integral in (\ref{eqn:2r-Moment_symTnp}) is a non-negative function. Furthermore, since $|\epsilon_\pi(i)|=1$ for all $i$, the function is strictly positive on the domain $\left[\frac{m}{4},\frac{3m}{4}\right] \times \left[-\frac{m}{8r},\frac{m}{8r}\right]^{r}$, where $m=\min \{\lambda,1\}$. It follows from here that $M_r>0$ for all $r \geq 1$. 
\end{remark}
%\vskip2pt
\subsection{\large \bf Fluctuation of $T_{n \times p}$  with diagonal entries as zero}  \label{sec:proof_thm_TT'}

In this section, we prove Theorem \ref{thm:LES_TT'}.
We first define some sets and relations which will appear in the proof of Theorem \ref{thm:LES_TT'}.
For a vector $J= (j_1,j_2, \ldots, j_p) \in \Z^p$, we define the multi-set $S_{J}$  as 
\begin{equation}\label{def:S_|J|}
	%S_J=\{ j_1, j_2, \ldots , j_p \}. % \mbox{ and } 
	S_{J}=\{ |j_1|, |j_2|, \ldots , |j_p| \}.
\end{equation}
%$J_r=(j_1^r, j_2^r,\ldots , j_p^r)$
For a sequence of vectors $J_1, J_2, \ldots $, we shall use the notation $J_r=(j_1^r, j_2^r,\ldots , j_p^r)$ to denote the components of $J_r$ and $S_{J_r}$ to denote the multi-set associated with $J_r$.

Now, we derive the limiting covariance structure of $\{w_k;k\geq 1\}$.
\begin{theorem} \label{thm:covarTT'}
Let $\{a_i\}_{i \in \N}$ be a sequence of random variables which satisfy Assumption I with $a_0 \equiv 0$. Suppose $T_{n \times p}$ is the $n \times p$ symmetric Toeplitz matrix with input sequence $\{\frac{a_i}{\sqrt{n}}\}_{i \geq 0}$ and let $w_{k_1},w_{k_2}$ be as defined in (\ref{eqn:wp_Hn_odd}). Then for positive integers $k_1,k_2$, 
	\begin{align}\label{eqn:sigma_p,q_TT'-sym}
		\lim_{n \rightarrow \infty \atop p/n \rightarrow \lambda>0} \Cov(w_{k_1},w_{k_2})= \sum_{\pi \in \PP_2(2k_1,2k_2)}  (f^{-}_I(\pi)+f^{+}_I(\pi)) + (\kappa-1) \sum_{\pi \in \PP_{2,4}(2k_1,2k_2)}  (f^-_{II}(\pi)+f^+_{II}(\pi)),
	\end{align}
where $f^-_I(\pi),f^+_I(\pi), f^-_{II}(\pi)$ and $f^+_{II}(\pi)$ are as given in (\ref{eqn:f_Ipi-_T}), (\ref{eqn:f_Ipi+_T}),  (\ref{eqn:f_II-pi_T}) and (\ref{eqn:f_II+pi_T}), respectively.
\end{theorem}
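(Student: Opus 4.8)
The plan is to compute $\Cov(w_{k_1}, w_{k_2})$ by expanding each $w_{k_i}$ via the trace formula of Lemma~\ref{lem:trac_TT^t}, so that
\[
\Cov(w_{k_1}, w_{k_2}) = \frac{1}{n}\Big( \E\big[\Tr X^{k_1} \,\Tr X^{k_2}\big] - \E[\Tr X^{k_1}]\,\E[\Tr X^{k_2}]\Big),
\]
where $X = T_{n\times p}T'_{n\times p}$. Substituting the formula from Lemma~\ref{lem:trac_TT^t} turns each trace into a sum over an index vector: $\Tr X^{k_1}$ carries indices $(i; j_1,\ldots,j_{2k_1})$ and $\Tr X^{k_2}$ carries $(i'; j_{2k_1+1},\ldots,j_{2k_1+2k_2})$, each subject to its own Dirac delta constraint $\sum (-1)^q j_q = 0$ on the respective block. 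The covariance picks out exactly the joint moments $\E[a_{J}a_{J'}]$ that do \emph{not} factor as $\E[a_J]\E[a_{J'}]$; this forces a \emph{cross-matching} between the two index groups, which is precisely why the combinatorial sum will be indexed by $\PP_2(2k_1,2k_2)$ and $\PP_{2,4}(2k_1,2k_2)$ from Definition~\ref{def:P_2,4}.

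First I would carry out the standard moment-method pruning, exactly parallel to the proof of Theorem~\ref{thm: Moment TT^t}: a partition $\pi$ of $[2k_1+2k_2]$ contributes $O(n^{|\pi|})$ index choices, and after accounting for the two delta constraints and the normalization $n^{-(k_1+k_2+1)}\cdot n^{-1}\cdot n$ (the extra $n^{-1}$ from the covariance and $n$ from two traces sharing a factor), only partitions into blocks of size exactly two survive, \emph{with} the extra exception that one block of size four survives with a coefficient $(\kappa-1)$, where $\kappa$ is the fourth moment of the entries. Blocks entirely within one group merely reproduce the product $\E[\Tr X^{k_1}]\E[\Tr X^{k_2}]$ and cancel against the subtracted term; the surviving partitions are those with at least one cross-matching block, i.e.\ $\PP_2(2k_1,2k_2)$ (all pair blocks, at least one crossing) and $\PP_{2,4}(2k_1,2k_2)$ (one crossing 4-block, rest pairs). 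The $(\kappa-1)$ factor arises because a genuine 4-block contributes the fourth moment while the decomposition into two pair-blocks already accounts for the Gaussian/Wick part; this bookkeeping must be done carefully.

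Next I would convert the surviving sums into Riemann sums. For a fixed surviving $\pi$, the relation~(\ref{eqn:relation y_i,x_i}) (resp.\ the $\tau_\pi$ relation for $\PP_{2,4}$) expresses the $2(k_1+k_2)$ indices $y_i = \pm j$ in terms of the free block-variables $x_{\pi(i)}$, with $x_\ell$ ranging over $[-M,M]$ for a different-parity block and being forced (up to sign) to be equal within a same-parity block. Rescaling $i/n \to x_0$, $i'/n \to y_0$, and $j/n \to x_\ell$, and noting $M/n \to m = \min\{\lambda,1\}$ while $p/n \to \lambda$, the product of indicator constraints $\chi_{[1,n]}, \chi_{[1,p]}$ from $I(i,J,p,n)$ becomes the product of $\chi_{[0,1]}, \chi_{[0,\lambda]}$ appearing in the integrands $f_I^{\pm}, f_{II}^{\pm}$. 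The two signs $\pm$ in $f_I^\pm$ (and $f_{II}^\pm$) correspond to the two relative orientations of the second index group: since $i'$ can run forward or the second trace's indices can be reflected, the cross-matching identifies $j$'s from the two groups either with matching or opposite signs, producing the $y_0 + \cdots$ versus $y_0 - \cdots$ alternatives. The Dirac delta $\delta(\sum_{i=1}^{2k_1}(-1)^iy_i)$ survives explicitly in $f_I^\pm$ for the genuinely cross-matched pair-partition case, whereas in the $\PP_{2,4}$ case both group-sums are automatically annihilated by the $\tau_\pi$ construction, so no delta remains in $f_{II}^\pm$ — matching the stated integrals.

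The main obstacle will be the cross-matching sign analysis and the correct identification of which partitions yield the $f_I$ versus $f_{II}$ families together with the precise $y_0 \pm$ orientation. Tracking how the parity of $r_i + s_i$ (hence $\epsilon_\pi$ or $\tau_\pi$) interacts with the two delta constraints, and verifying that same-parity blocks collapse to the $[-m,m]$ integration while different-parity blocks give the $[-\lambda,1]$ integration exactly as encoded in Definition~\ref{def:SP+DP_partation}, requires care; in particular one must confirm that the $O(n^{-1/2})$ error terms (from $\Pi_2(\pi)$-type configurations where the delta forces one index, as in the proof of Theorem~\ref{thm: Moment TT^t}) do not contribute in the limit. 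Once the sign structure is pinned down, the remaining steps are routine Riemann-sum convergence, and summing the contributions over all $\pi \in \PP_2(2k_1,2k_2)$ and $\pi \in \PP_{2,4}(2k_1,2k_2)$ yields precisely~(\ref{eqn:sigma_p,q_TT'-sym}).
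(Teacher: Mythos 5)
Your proposal follows essentially the same route as the paper's own proof: expand the covariance via Lemma \ref{lem:trac_TT^t}, prune partitions of $[2k_1+2k_2]$ by the degree-of-freedom count so that only $\PP_2(2k_1,2k_2)$ and $\PP_{2,4}(2k_1,2k_2)$ survive (the latter carrying the factor $\kappa-1$ from the covariance subtraction), and convert the surviving sums into the Riemann sums of $f_I^{\pm}(\pi)$ and $f_{II}^{\pm}(\pi)$, with the $\pm$ arising from the two sign orientations $(J_1,J_2)$ versus $(J_1,-J_2)$ of the cross-matched indices. Your further observations --- that the Dirac delta survives only in the $f_I^{\pm}$ integrals while the $\tau_\pi$ relations annihilate both group-sums in the $\PP_{2,4}$ case, and that same-parity versus different-parity blocks produce the $[-m,m]$ versus $[-\lambda,1]$ integration domains --- are precisely the bookkeeping carried out in the paper, so the plan is correct with no essential gap.
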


\begin{proof} 
First note from Lemma \ref{lem:trac_TT^t} that 
\begin{align} \label{eqn: E w_k_1 first}
&w_{k_1} =\frac{1}{n^{k_1+\frac{1}{2}}} \sum_{i=1}^{n} \sum_{j_{1}, \ldots, j_{2k_1}=-(p-1)}^{n-1} \Big( \prod_{r=1}^{2k_1} a_{j_{r}}-\E\big(\prod_{r=1}^{2k_1} a_{j_{r}}\big) \Big)   I(i,J_{1},p,n)  \delta_{0} (\sum_{q=1}^{2k_1}(-1)^{q} j_{q}), 
%&\E(w_{k_2}) =\frac{1}{n^{k_2+\frac{1}{2}}} \sum_{i=1}^{n} \sum_{j_{1}, \ldots, j_{2k_2}=-(p-1)}^{n-1} \prod_{r=1}^{2k_2} a_{j_{r}}   I(i,J_{k_2},p,n)  \delta_{0} (\sum_{q=1}^{2k_2}(-1)^{q} j_{q}) \label{eqn: E w_k_2 first}
\end{align} 
where $I(i,J_{1},p,n)$ is as defined in (\ref{eqn:I(i,j,p,n)}). It follows that 
\begin{align} \label{eqn: Ew_k_1 w_k_2 first}
	& \Cov(w_{k_1},w_{k_2}) \nonumber \\
	&=\frac{1}{n^{k_1+k_2+1}}\sum_{i=1 \atop i^\prime=1}^{n} \sum_{j_{1}, \ldots, j_{2k_1}=-(p-1) \atop j_{1}^\prime, \ldots, j_{2k_2}^\prime=-(p-1)}^{n-1} \Big[\E\big(\prod_{r=1}^{2k_1} a_{j_{r}}\prod_{r=1}^{2k_2} a_{j_{r}^\prime}\big)- \E\big(\prod_{r=1}^{2k_1} a_{j_{r}} \big) \E\big(\prod_{r=1}^{2k_2} a_{j_{r}^\prime}\big) \Big] g(i,i^\prime,J_1,J_2,p,n),
\end{align}
where  $J_1$ and $J_2$ are given by $J_1=(j_{1},j_2, \ldots, j_{2k_1})$, $J_2=(j_{1}^\prime,j_2^\prime, \ldots, j_{2k_2}^\prime)$, and
%; the functions $I(i,J_{1},p,n)$ and $I(i,J_{2},p,n)$ are as given in (\ref{eqn:I(i,j,p,n)}) and
\begin{equation}\label{eqn:g(J,Jprime,p,n)}
	g(i,i^\prime,J_1,J_2,p,n)= I(i,J_{1},p,n)I(i,J_{2},p,n) \delta_{0} (\sum_{q=1}^{2k_1}(-1)^{q} j_{q}) \delta_{0} (\sum_{q=1}^{2k_2}(-1)^{q} j_{q}^\prime).
\end{equation}

For a vector $J= (j_1,j_2,\ldots, j_{2k_1+2k_2}) \in \{-(p-1),\ldots,(n-1)\}^{2k_1+2k_2}$, we define $j_r^\prime=j_{2k_1+r}$ for all $1 \leq r \leq 2k_2$. In this proof, we maintain the convention that for a vector $J=(j_1,j_2,\ldots, j_{2k_1+2k_2}) \in \{-(p-1),\ldots,(n-1)\}^{2k_1+2k_2}$, $J_1=(j_1,j_2,\ldots, j_{2k_1})$ and $J_2=(j_{2k_1+1},j_{2k_1+2}\ldots, j_{2k_1+2k_2})$.  Furthermore, note that for a vector $J$, if the element $|j_u|$ appears only once in $S_J$ or if $j_u=0$, then  the summand corresponding to $J$ in (\ref{eqn: Ew_k_1 w_k_2 first}) is zero. Hence, we get
%We choose the following notation: For $1 \leq r \leq 2k_2$, we write $j_{2k_1+r}=j_r^\prime$. Consider a fixed choice of $J=(j_1,j_2,\ldots,j_{2(k_1+k_2)})$. Consider the equivalence relation defined on the set $[2(k_1+k_2)]$ given by, $r \sim_J s$ if $j_r=j_s$. Let $B_1,B_2,\ldots, B_u$ be the equivalence classes with respect to the relation $\sim_J$. Suppose $\hat{J}=(\hat{j}_1,\hat{j}_2,\ldots,\hat{j}_{2(k_1+k_2)})$ is another choice of the $(2k_1+2k_2)$ tuples such that relation $\sim_{\hat{J}}$ has equivalence classes $B_1,B_2,\ldots, B_u$. Then it follows from the independence of $(a_j)_{j \in \N}$ that the values of $\E(w_{k_1}),\E(w_{k_2})$ and $\E(w_{k_1})\E(w_{k_2})$ in (\ref{eqn: E w_k_1 first})-(\ref{eqn: E w_k_2 first}) are the same. Furthermore, if the size of one of the equivalence classes is one, then since $\E(a_j)=0$ for all $j$, it follows that from (\ref{eqn: E w_k_1 first})-(\ref{eqn: Ew_k_1 w_k_2 first}) that  $\E(w_{k_1}),\E(w_{k_2})$ and $\E(w_{k_1}w_{k_2})$ are all zero. Hence we get,
\begin{align}\label{eqn: Cov_TT^t one}
	& \Cov(w_{k_1},w_{k_2}) \nonumber\\
	&= \frac{1}{n^{k_1+k_2+1}}\sum_{i=1 \atop i^\prime=1}^{n} \sum_{\pi \in \widehat{\mathcal{P}}\left(2k_1+2k_2\right)} \sum_{J \in \Pi(\pi)} \Big[\E\Big(\prod_{r=1}^{2k_1+2k_2} a_{j_{r}}\Big)-\E\Big(\prod_{r=1}^{2k_1} a_{j_{r}}\Big)\E\Big(\prod_{r=2k_2+1}^{2k_2} a_{j_{r}^\prime}\Big) \Big] g(i,i^\prime,J_1,J_2,p,n) \nonumber \\
	&= \frac{1}{n^{k_1+k_2+1}}  \sum_{\pi \in \widehat{\mathcal{P}}\left(2k_1+2k_2\right)} \sum_{i=1 \atop i^\prime=1}^{n} |\Pi(\pi)| V(\pi), \,\text{say} ,
\end{align}
where $\widehat{\mathcal{P}}\left(2k_1+2k_2\right)$ is the set of all partitions on $[2k_1+2k_2]$ where each block has size greater than or equal to 2, and  $\Pi(\pi)$ is the set of all vectors $J \in \{-(p-1),-(p-2), \ldots,-1,1,\ldots, (n-1)\}^{2k_1+2k_2}$ such that $u \sim_{\pi} v$ if and only if $|j_u|=|j_v|$. Note that the last summation follows from the fact that the summand in (\ref{eqn: Ew_k_1 w_k_2 first}) is equal for all $J \in \Pi(\pi)$. Here $|\Pi(\pi)|$ denotes the cardinality of the set $\Pi(\pi)$.

Now, we find the set of all partitions $\pi$ such that the asymptotic contribution of $\pi$ to $\Cov(w_{k_1}w_{k_2})$ is non-zero. Note that once the partition is fixed, the degree of freedom for choosing $j_1,j_2,\ldots,j_{2(k_1+k_2)}$ is equal to $|\pi|$, the number of 
blocks of the partition  $\pi$. Now, we consider different cases based on the number of blocks in the partition $\pi$.
\vskip3pt
\noindent \textbf{Case I:} The number of blocks of $\pi$ is strictly less than $k_1+k_2-1$.

\noindent In this case, we have that $|\Pi(\pi)|$ is of the order $O(n^{|\pi|})< O(n^{k_1+k_2-1})$. From Assumption I, we have that for each fixed $r$, there exists $M>0$ such that $\E |a_j|^r \leq M$ for all $j \in \N$. Therefore, the summand in (\ref{eqn: Cov_TT^t one}) is bounded above by a constant. Hence, we get that for each $\pi$ following the condition of Case I:
\begin{equation*}
\frac{1}{n^{k_1+k_2+1}}  \sum_{J \in \Pi(\pi)} \sum_{i=1 \atop i^\prime=1}^{n} | \Pi(\pi)| V(\pi)=O\left(\frac{1}{n^{k_1+k_2+1}} n^2 n^{| \Pi(\pi)|}\right)=o(1).
\end{equation*}
\vskip4pt
\noindent \textbf{Case II:} The number of blocks of $\pi$ is $k_1+k_2-1$.

\noindent Consider $\pi \in \widehat{\mathcal{P}}(2k_1+2k_2)$ such that the number of blocks of $\pi$ is $k_1+k_2-1$. Suppose $\pi$ contains a block $B_j$ such that $|(B_j \cap [2k_1])|=1$ or $|\left(B_j \cap \left([2(k_1+k_2)]\setminus [2k_1]\right)\right)|=1$. Without loss of generality, suppose $|(B_j \cap [2k_1])|=1$ and let $\{b\}=B_j \cap [2k_1]$. Observe that if all $j_q$, except $j_b$ are chosen, then the number of choices of $j_b$ for which the summand in (\ref{eqn: Ew_k_1 w_k_2 first}) is non-zero is at most one. So, in this case, there is a loss of one degree of freedom for choosing $j_q$'s and thus $| \Pi(\pi)|=O(n^{|\pi|-1})$ for all $\pi$ such that $V(\pi) \neq 0$, and therefore
\begin{equation*}
\frac{1}{n^{k_1+k_2+1}}  \sum_{J \in \Pi(\pi)} \sum_{i=1 \atop i^\prime=1}^{n} | \Pi(\pi)| V(\pi)=o(1).
\end{equation*}
As a result, in this case for the contribution of $\pi$ to be non-zero in the limit, each cross-matched block must have at least four elements. Now, observe that the number of blocks of $\pi$ is $k_1+k_2-1$ and there is at least one cross-matching in $\pi$. Thus it follows that $\pi$ has a non-zero contribution in limit only if all the following conditions are satisfied:
\begin{enumerate}[(i)]
	\item $\pi$ has only one-cross matched block, say $B_j$.
	\item $|B_j|=4$ and, two elements of $B_j$ belong to $[2k_1]$ and the remaining two elements of $B_j$ belong to $[2(k_1+k_2)\setminus 2k_1]$.
	\item All other block of $\pi$ have cardinality 2.
\end{enumerate}
In other words, $\pi \in \widehat{\mathcal{P}}(2k_1+2k_2)$ makes a non-zero contribution in the limit of (\ref{eqn: Cov_TT^t one}) only if $\pi \in \mathcal{P}_{2,4}(2k_1,2k_2)$. 

Consider a fixed $\pi \in {\mathcal{P}}_{2,4}(2k_1+2k_2)$. From Assumption I, we have that $\E a_j^2=1$ and $\E a_j^4=\kappa$ for all $j$. Thus from (\ref{eqn: Cov_TT^t one}), we have 
\begin{align}\label{eqn: Cov eq2}
	\frac{1}{n^{k_1+k_2+1}}  \sum_{J \in \Pi(\pi)} \sum_{i=1 \atop i^\prime=1}^{n} |\Pi(\pi)| V(\pi) &=(\kappa-1)	\frac{1}{n^{k_1+k_2+1}}  \sum_{i=1 \atop i^\prime=1}^{n}\sum_{J \in \Pi(\pi)} g(i,i^\prime,J_1,J_2,p,n),
	%&=(\kappa-1) \frac{1}{n^{k_1+k_2+1}}  \sum_{i=1 \atop i^\prime=1}^{n}\sum_{x_1,x_2,\ldots,x_{k_1+k_2-1}=1}^n \sum_{J} g(J,p,n)\label{eqn: Cov eq2},
\end{align}
where $\Pi(\pi)$ is as defined in (\ref{eqn: Cov_TT^t one}).

%Consider a fixed choice of $x_1,x_2,\ldots,x_{k_1+k_2-1}$ and $\pi \in \mathcal{P}_{2,4}(2k_1,2k_2)$ with blocks $B_1,B_2,\ldots, B_{k_1+k_2-1}$ with $|B_i|=4$. Let $J=(J_1,J_2) \in \Pi(\pi)$ be a vector such that $|j_{r}|=x_{\pi(r)}$ for all $r$. By definition of $\mathcal{P}_{2,4}(2k_1,2k_2)$, we have that for each $j$, $|B_\ell \cap S_{J_u}| \in \{0,2\}$ for $u=1,2$ and if $B_\ell \cap S_{J_u}=\{u_1,u_2\}$, then $|j_{u_1}|=|j_{u_2}|=x_\ell$. By an argument similar to the proof in Theorem \ref{thm: Moment TT^t}, it follows that for each $J\in \Pi(\pi)$ non-zero contribution occurs only under the additional condition $j_{u_1}=(-1)^{u_1+u_2+1}j_{u_2}$.

Consider a fixed $\pi \in \mathcal{P}_2(2k_1,2k_2)$ with blocks $B_1,B_2,\ldots, B_{k_1+k_2-1}$ with $|B_i|=4$ and $J=(J_1,J_2) \in \Pi(\pi)$. By the definition of $\mathcal{P}_{2,4}(2k_1,2k_2)$, we have that for each $u$, $|(B_\ell \cap S_{J_u})| \in \{0,2\}$ for $u=1,2$ and let $B_\ell \cap S_{J_u}=\{u_1,u_2\}$. By an argument similar to the proof of Theorem \ref{thm: Moment TT^t}, it follows that a non-zero contribution occurs only under the additional condition $j_{u_1}=(-1)^{u_1+u_2+1}j_{u_2}$. This is possible only when
%To prove that, without loss of generality, suppose $B_\ell \cap S_{J_1}=\{u_1,u_2\}$ and $j_{u_1}=j_{u_2}$. Then it follows that the equation $\delta_{0} (\sum_{q=1}^{k_1}(-1)^{q} j_{q})$ is non-zero only for one value of $j_{u_1}$, and hence only one value of $x_\ell$. Thus the number of choices of $x_1,x_2,\ldots,x_{k_1+k_2-1}$ for which (\ref{eqn: Cov eq2}) is non-zero is of the order $O(n^{k_1+k_2-2})$, which implies that the contribution of all such terms converges to zero in limit.
%This implies that for each $\ell \neq i$, $u_1,u_2 \in B_\ell$, $\tau_\pi(u_1)j_{u_1}=\tau_\pi(u_2)j_{u_2}=\pm x_\ell$, where $\tau_\pi$ is as defined in (\ref{eqn:tau map}). Thus, if we choose $x_\ell \in \{\pm 1, \pm 2,\ldots, \pm n\}$, then 
\begin{align}\label{eqn:tau condition1}
	\tau_\pi(r)j_{r}=\tau_\pi(s)j_{s} \text{ for } B_{\ell}=\{r,s\},
\end{align}
and for $B_i=\{r_1,s_1,r_2,s_2\}$ with $r_1<s_1<r_2<s_2$,
\begin{align}
	&\tau_\pi(r_1)j_{r_1}=\tau_\pi(s_1)j_{s_1}=\tau_\pi(r_2)j_{r_2}=\tau_\pi(s_2)j_{s_2} \label{eqn:tau condition2}\\	\text{ or } &\tau_\pi(r_1)j_{r_1}=\tau_\pi(s_1)j_{s_1}=-\tau_\pi(r_2)j_{r_2}=-\tau_\pi(s_2)j_{s_2}. \label{eqn:tau condition3}
\end{align}
Note that a vector $(J_1,J_2)$ obeys (\ref{eqn:tau condition3}) if and only if $(J_1,-J_2)$ obeys (\ref{eqn:tau condition2}). Furthermore, note that a vector $(J_1,J_2)$ obeys (\ref{eqn:tau condition1}) if and only if $(J_1,-J_2)$ obeys (\ref{eqn:tau condition1}), and $(J_1,J_2) \in \Pi(\pi)$ if and only if $(J_1,-J_2) \in \Pi(\pi)$. On combining these ideas with (\ref{eqn: Cov eq2}), we get that the contribution of $\pi \in \mathcal{P}_{2,4}(2k_1,2k_2)$ to $\Cov(w_{k_1},w_{k_2})$ is 
\begin{equation}\label{eqn: Cov eq 3}	
	(\kappa-1) \frac{1}{n^{k_1+k_2+1}}  \sum_{i=1 \atop i^\prime=1}^{n} \left(\sum_{J \in \Pi_1(\pi)} g(i,i^\prime,J_1,J_2,p,n)+\sum_{J \in \Pi_2(\pi)} g(i,i^\prime,J_1,J_2,p,n)\right)+o(1),
\end{equation}
where 
\begin{align*}
	&\Pi_1(\pi)=\{J=(J_1,J_2) \in \Pi(\pi): (J_1,J_2) \text{ obeys } (\ref{eqn:tau condition1}) \text{ and } (\ref{eqn:tau condition2})\}, \\
	&\Pi_2(\pi)=\{J=(J_1,J_2) \in \Pi(\pi): (J_1,-J_2) \text{ obeys } (\ref{eqn:tau condition1})\text{ and } (\ref{eqn:tau condition2})\}.
\end{align*}

Note that by conditions (\ref{eqn:tau condition1}) and (\ref{eqn:tau condition2}), it follows that $\delta_{0}  (\sum_{q=1}^{2k_1}(-1)^{q} j_{q})$ and $ \delta_{0} (\sum_{q=1}^{2k_2}(-1)^{q} j_{q}^\prime)$ are equal to one for all $J \in \Pi_1(\pi)$ and $J \in \Pi_2(\pi)$. Therefore, the first term of (\ref{eqn: Cov eq 3}) is the Riemann sum of $f_{II}^+(\pi)$ and the second term of (\ref{eqn: Cov eq 3}) is the Riemann sum of $f_{II}^-(\pi)$. Hence, the contribution in this case is 
$$(\kappa-1) \sum_{\pi \in \PP_{2,4}(2k_1,2k_2)}  (f^-_{II}(\pi)+f^+_{II}(\pi)).$$
 \vskip3pt
\noindent\textbf{Case III:} The number of blocks of $\pi$ is $k_1+k_2$.

\noindent In this case, we have that each block is of size 2. Note that in this case, (\ref{eqn: Cov_TT^t one}) is equal to zero if all blocks of $\pi$ are either subsets of $[2k_1]$ or $[2(k_1+k_2)]\setminus [2k_1]$ . Hence, to have non-zero contribution in limit, $\pi$ should be an element of $\mathcal{P}_2(2k_1,2k_2)$. For $\pi \in \mathcal{P}_2(2k_1,2k_2)$, we have 
$$\E\left(\prod_{r=1}^{2k_1+2k_2} a_{j_{r}}\right)=1, \E\left(\prod_{r=1}^{2k_1} a_{j_{r}}\right)=0 \mbox{ and } \E\left(\prod_{r=2k_2+1}^{2k_2} a_{j_{r}^\prime}\right)=0.$$
 Thus, we get that for $\pi \in \mathcal{P}_2(2k_2,2k_2)$,
\begin{equation*}
	\frac{1}{n^{k_1+k_2+1}}  \sum_{J \in \Pi(\pi)} \sum_{i=1 \atop i^\prime=1}^{n} |\Pi(\pi)| V(\pi)=	\frac{1}{n^{k_1+k_2+1}}  \sum_{i=1 \atop i^\prime=1}^{n}\sum_{J \in \Pi(\pi)} g(i,i',J_1,J_2,p,n),
\end{equation*}
where $g(i,i',J_1,J_2,p,n)$ is as defined in (\ref{eqn: Ew_k_1 w_k_2 first}). Let $u$ be the smallest integer such that $u \sim_{\pi} v$ for some $ 1 \leq u \leq 2k_1 < v \leq 2(k_1+k_2)$. Then note that $g(i,i',J_1,J_2,p,n)$ is non-zero only if 
$$j_u=\sum_{i=1 \atop i \neq u}^{2k_1}(-1)^i j_i \mbox{ and } j_v=\sum_{i=2k_1+1 \atop i \neq u}^{2k_1+2k_2}(-1)^i j_i.$$ Thus $j_u$ and $j_v$ are determined by other $j_q$'s and $(-1)^uj_u+(-1)^v j_v$ might be non-zero. By an argument similar to Case II, it follows that a non-zero contribution occurs only when for $(J_1,J_2) \in \Pi(\pi)$, either $(J_1,J_2)$ or $(J_1,-J_2)$ obeys the equation
\begin{equation}\label{eqn: epsilon_cond (J_1,J_2)}
		\epsilon_\pi(u_1)j_{u_1}=\epsilon_\pi(u_2)j_{u_2}=x_\ell,
\end{equation}
where $j_i \in \{-(p-1),-(p-2),\ldots, -1, 1, 2, \ldots, n-1\}$ for each $i$ and $\epsilon_{\pi}$ is as defined in (\ref{eqn: epsilon map}). This implies that as $p,n \rightarrow \infty $ and $p/n \rightarrow \lambda$, the contribution when $(J_1,-J_2)$ obeys (\ref{eqn: epsilon_cond (J_1,J_2)}) is $f_I^-(\pi)$ and the contribution when $(J_1,J_2)$ obeys (\ref{eqn: epsilon_cond (J_1,J_2)}) is $f_I^+(\pi)$. This completes the proof of the Theorem \ref{thm:covarTT'}.
\end{proof}

Now we state some more notations and results  which will be used in the proof of Theorem \ref{thm:LES_TT'}.
% Theorem \ref{thm:toeprocess}. 
%For a vector $J = (j_1, j_2, \ldots, j_k) \in \Z^k$, define 
First recall the notion of multi-set $S_J$ for a given vector $J$ from (\ref{def:S_|J|}).
%\begin{equation}\label{def:S_j}
%	S_{J} = \{|j_1|, |j_2|, \ldots, |j_k|\}.
%\end{equation}
%Note that in $S_{J}$, elements are appearing with their multiplicities.
\begin{definition}\label{def:connected}
	Two vectors $J =(j_1, j_2, \ldots, j_k)\in \Z^k$ and $J' = (j'_1, j'_2, \ldots, j'_\ell)\in \Z^\ell$ are said to be \textit{connected} if
	$S_{J}\cap S_{J'} \neq \emptyset$.
\end{definition}
\begin{definition}\label{def:cluster}
	Given a set of vectors $S= \{J_1, J_2, \ldots, J_\ell \}$, where $J_i \in \Z^{k_i}$ for $1 \leq i \leq \ell$, a subset $C=\{J_{n_1}, J_{n_2}, \ldots, J_{n_k}\}$ of $S$ is called a \textit{cluster} if it satisfies the following two conditions: 
	\begin{enumerate}
		\item[(i)] For any pair $J_{n_i}, J_{n_j}$ from  $C$ one can find a chain of vectors from 
		$C$, which starts with $J_{n_i}$ and ends with $J_{n_j}$ such that any two neighbouring vectors in the chain are connected.
		\item[(ii)] The subset $C$ cannot  be enlarged to a subset which preserves condition (i).
	\end{enumerate}
\end{definition}
Now note that if the diagonal entries of Toeplitz matrices are zero, then the trace formula of Lemma \ref{lem:trac_TT^t}  can be written as
\begin{align} \label{def:trace_Tn_A}
	\Tr( X^{(k)} \cdots X^{(1)})
	= \sum_{i=1}^{n} \sum_{A_k}  \prod_{r=1}^{k} a^{(r)}_{j_{2r-1}} a^{(r)}_{j_{2r}}  I(i,J,p,n),
\end{align}
where $I(i,J,p,n)$ is as in (\ref{eqn:I(i,j,p,n)}), $X^{(r)}=T^{(r)}_{n\times p} (T^{(r)}_{n \times p})'$ and 
\begin{align}\label{eqn:A_k}
	A_k= \big\{(j_1,\ldots, j_{2k}) \in \{-(p-1),\ldots, -1, 1,\ldots, (n-1)\}^{2k} : \sum_{q=1}^{2k}(-1)^{q} j_{q}=0 \big\}.
\end{align}

The following lemma guarantee that the cardinality of a clusters with length greater than two is negligible when $n$ becomes large.
\begin{lemma}\label{lem:B_{P_l}} 
 Suppose $ B_{K_\ell}$ be the subset of all $ (J_1,J_2,\ldots,J_\ell)\in A_{k_1} \times A_{k_2} \times \cdots \times A_{k_\ell}$ such that  
	\begin{enumerate} 
		\item[(i)] $\{J_1, J_2, \ldots, J_\ell\} $ forms a cluster, 
		\item[(ii)] each element in  $\displaystyle{\cup_{i=1}^{\ell} S_{J_i} }$ has  multiplicity greater than or equal to two.
		% 	\item[(iii)] $0 \notin \displaystyle{\cup_{i=1}^{\ell} S_{J_i} }$.
	\end{enumerate} 
	Then for $\ell \geq 3$,
	\begin{equation*}
		|B_{K_\ell }| = o \big({(n+p)}^{k_1+k_2 + \cdots + k_\ell - \frac{\ell}{2} }\big)
	\end{equation*}
and $|B_{K_2 }| = O \big({(n+p)}^{k_1+k_2-1}\big)$.
\end{lemma}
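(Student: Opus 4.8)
The plan is to bound $|B_{K_\ell}|$ by elementary lattice-point counting and then extract the needed exponent from the connectedness of the cluster. Write $N=k_1+\cdots+k_\ell$, so that the tuple $(J_1,\dots,J_\ell)$ consists of $2N$ integer entries $j_q$, each ranging over a set of size $O(n+p)$. I would first split $B_{K_\ell}$ according to its \emph{pattern}: the set-partition of the $2N$ slots recording which entries share a common absolute value (its blocks are exactly the multiplicity classes, all of size $\ge 2$ by hypothesis (ii)), together with the sign pattern $q\mapsto \sigma_q\in\{\pm1\}$ with $j_q=\sigma_q v_{\mathrm{block}(q)}$. For a fixed pattern the constraints $\sum_{q}(-1)^q j_q=0$ defining each factor $A_{k_i}$ (see (\ref{eqn:A_k})) turn into $\ell$ linear equations $L_1,\dots,L_\ell$ in the $d$ distinct magnitudes $v_1,\dots,v_d$, with integer coefficients read off from the pattern. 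The number of admissible $(v_1,\dots,v_d)$ in a box of side $O(n+p)$ is then $O\big((n+p)^{\,d-\rho}\big)$, where $\rho=\operatorname{rank}\{L_1,\dots,L_\ell\}$; since the number of patterns is bounded independently of $n,p$, this gives $|B_{K_\ell}|=O\big((n+p)^{\,D}\big)$ with $D:=\max_{\mathrm{patterns}}(d-\rho)$.

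Everything therefore reduces to the combinatorial estimate $D\le N-(\ell-1)$. Granting it, the case $\ell=2$ gives $|B_{K_2}|=O\big((n+p)^{k_1+k_2-1}\big)$, while for $\ell\ge 3$ one has $N-(\ell-1)=N-\ell+1<N-\ell/2$ (because $\ell/2<\ell-1$ exactly when $\ell>2$), so $|B_{K_\ell}|=o\big((n+p)^{k_1+\cdots+k_\ell-\ell/2}\big)$, as claimed. The heuristic behind $D\le N-(\ell-1)$ is that $\ell$ independent matched vectors would contribute freedom $N$, and \emph{gluing} them into a single cluster forces at least $\ell-1$ identifications, each costing one degree of freedom. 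Since hypothesis (ii) already gives $d\le N$ (the $2N$ slots split into blocks of size $\ge2$), the estimate is equivalent to the rank bound $\rho\ge(\ell-1)-(N-d)$, i.e. to controlling how the per-vector constraints interact with the shared magnitudes.

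To prove the rank bound I would pass to the left null space $W=\{c\in\mathbb{R}^\ell:\ \sum_{q\ni r} w_q\,c_{i(q)}=0\ \text{for every block }r\}$, where $w_q=(-1)^q\sigma_q$ and $i(q)$ is the vector containing slot $q$; one has $\dim W=\ell-\rho$, so the goal becomes $\dim W\le 1+(N-d)$. For the base case in which every magnitude occurs \emph{exactly} twice ($d=N$), each block is an edge (or self-loop) of the cluster graph on the vertex set $\{1,\dots,\ell\}$, and the defining relation $w_q c_{i}+w_{q'}c_{i'}=0$ lets one propagate every $c_i$ from a single root along a spanning tree; connectedness (Definition \ref{def:cluster}) then forces $\dim W\le 1$. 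The general case would follow by induction on the multiplicity excess $\sum_r(m_r-2)=2(N-d)$: splitting a pair of slots off a block of multiplicity $\ge4$ into a new multiplicity-two block removes one linear relation and raises $d$ by one, so it changes $\dim W$ by at most $1$ while raising $N-d$ by $1$, preserving the bound.

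The main obstacle is precisely this last, bookkeeping-heavy step. The rank $\rho$ can drop below the naive count whenever a block has two slots inside one vector whose signed contributions $w_q$ cancel, making a constraint trivial; such cancellations are exactly what is compensated by the multiplicity excess, and matching the two sides tightly (rather than losing a factor of two) is delicate. In particular the odd-multiplicity blocks have to be treated with care: the total excess $\sum_r(m_r-2)$ is always even, so odd blocks occur in pairs, and the reduction to the all-multiplicity-two base case must handle the residual multiplicity-three blocks by a direct check rather than by pair-splitting. Once the inequality $\dim W\le 1+(N-d)$ is secured for every pattern, the two displayed asymptotics follow immediately from the reduction in the first paragraph.
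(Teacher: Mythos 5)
Your reduction is sound as far as it goes: fixing the ``pattern'' (the partition of the $2N$ slots, $N=k_1+\cdots+k_\ell$, into multiplicity classes of size $\ge 2$, together with the signs) leaves $O\big((n+p)^{d-\rho}\big)$ integer points per pattern, there are $O(1)$ patterns, and both assertions of the lemma would indeed follow from the single inequality $d-\rho\le N-(\ell-1)$, equivalently your $\dim W\le 1+(N-d)$; your verification of the base case $d=N$ (spanning-tree propagation using connectedness) and of the case $\ell=2$ is correct. The genuine gap is that this inequality --- which is where the entire content of the lemma sits --- is left unproven, and the induction you sketch has a step that can actually fail. A pair-split of a block can disconnect the cluster, and then the inductive hypothesis (which is exactly what buys the ``$1+$'' rather than ``(number of components)$+$'') is unavailable; patching by applying the bound to each component only gives $\dim W'\le 2+(N-d')$, hence $d-\rho\le N-\ell+2$, and a single such loss already kills the case $\ell=3$, since $O\big((n+p)^{N-1}\big)$ is not $o\big((n+p)^{N-3/2}\big)$. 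Nor can the loss always be avoided by choosing the split wisely: take $\ell=4$ vectors with $k_i=2$, one block $A$ consisting of exactly one slot from each vector, and in each vector a multiplicity-$3$ block on its three remaining slots. This pattern is connected, has all multiplicities $\ge 2$, and is realizable, yet every pair-split of $A$ disconnects it, while the multiplicity-$3$ blocks admit no pair-split at all (a split would create a singleton class); the induction has no legal move. This example also shows your second deferred point is not cosmetic: odd-multiplicity blocks produce relations such as $\pm 2c_i\pm c_j=0$ or $\pm c_i\pm c_j\pm c_k=0$, which the base-case propagation does not cover, so the ``direct check'' you postpone is the missing argument, not a routine verification.

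For comparison, the paper itself gives no proof of this lemma: it cites Lemma 5.3 of \cite{liu2012fluctuations}, where the analogous bound is obtained by a degrees-of-freedom count in the same spirit as your first paragraph. So your overall strategy is the intended one, but as written it is a correct reduction to a clearly identified combinatorial inequality together with an incomplete proof of that inequality. Until the rank bound is established for every connected pattern with multiplicities $\ge 2$ --- for instance by an induction that processes the vectors in a connectivity order, or by a split rule proved to preserve connectedness combined with a direct treatment of odd blocks and of blocks meeting each vector in one slot --- the lemma is not proved.
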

For the proof of Lemma \ref{lem:B_{P_l}}, we refer the reader to Lemma 5.3 of \cite{liu2012fluctuations}. The idea of the proof is same and so we skip the proof here.
The following lemma is an easy consequence of Lemma \ref{lem:B_{P_l}} and is one of the main ingredients for the proof of Theorem \ref{thm:LES_TT'}.

\begin{lemma}\label{lem:maincluster_TT'}
	Suppose  $\{x_i\}_{i \geq 1}$ is a sequence of random variables which satisfy Assumption I. Then for $n,p \rightarrow \infty$ with $p/n \rightarrow \lambda >0$ and $\ell \geq 3,$
	\begin{equation}\label{equation:maincluster_rev}
		\frac{1}{ n^{k_1+k_2+ \cdots + k_\ell - \frac{\ell}{2}}} \sum_{J_1 \in A_{k_1}, \ldots, J_\ell \in A_{k_\ell}, \atop  \{J_1, \ldots, J_\ell \} \text{ forms a cluster}} \E\Big[\prod_{r=1}^{\ell}\Big(x_{J_r} - \E(x_{J_r})\Big)\Big] = o(1),
	\end{equation}
	where
	$J_r=(j^{r}_{1}, j^{r}_{2},\ldots, j^{r}_{2k_r})\in A_{k_r}, \ A_{k_r} \mbox{ as in (\ref{eqn:A_k}) and} \ x_{J_{r}} = \prod_{s=1}^{2k_r} x_{j_{s}^r}.$	
\end{lemma}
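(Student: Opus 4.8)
The plan is to reduce the displayed sum to the set $B_{K_\ell}$ of Lemma \ref{lem:B_{P_l}}, bound each summand by a constant independent of the indices, and then apply that lemma together with the hypothesis $p/n\to\lambda$.

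First I would show that a summand $\E\big[\prod_{r=1}^{\ell}(x_{J_r}-\E(x_{J_r}))\big]$ vanishes unless every element of the union multiset $\bigcup_{i=1}^{\ell} S_{J_i}$ has multiplicity at least two. Recall that, under the symmetric convention $x_{-i}=x_i$, a value $v\in \bigcup_i S_{J_i}$ corresponds to a single random variable $x_v$, and by Assumption I the $x_v$ are independent with $\E[x_v]=0$. Suppose some $v$ occurs with total multiplicity one; then $x_v$ sits (to the first power) inside exactly one factor, say $x_{J_{r_0}}$, and does not occur in any of the constants $\E(x_{J_r})$. Expanding $\prod_{r}(x_{J_r}-\E(x_{J_r}))$ into a sum over the $2^{\ell}$ choices of $x_{J_r}$ versus $-\E(x_{J_r})$ in each slot, every resulting term either picks $x_{J_{r_0}}$, in which case it contains $x_v$ to the first power and is independent of it in all remaining factors so that taking expectation factors out $\E[x_v]=0$, or it picks the constant $-\E(x_{J_{r_0}})$, which is itself $\E[x_v]\cdot(\cdots)=0$ since $v$ has multiplicity one within $J_{r_0}$ as well. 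Either way the term is zero. Consequently, the only nonzero contributions come from tuples $(J_1,\ldots,J_\ell)\in A_{k_1}\times\cdots\times A_{k_\ell}$ that form a cluster and in which every value of the union has multiplicity at least two, that is, precisely from $B_{K_\ell}$.

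Next I would bound each surviving summand uniformly. Writing $N=\sum_{r=1}^{\ell}2k_r$ for the total number of factors and expanding as above, each term is a product of an expectation $\E[\prod_{r\in S}x_{J_r}]$ and constants $\E(x_{J_r})$; by the triangle inequality together with the generalized H\"older inequality, each such expectation is controlled by the uniform moment bounds $\sup_i \E|x_i|^{m}=\alpha_m<\infty$ ($2\le m\le N$) of Assumption I. This yields a constant $C=C(k_1,\ldots,k_\ell)$, independent of the indices, with $\big|\E[\prod_r (x_{J_r}-\E(x_{J_r}))]\big|\le C$. Combining the two steps, the left-hand side of (\ref{equation:maincluster_rev}) is bounded in absolute value by $C\,|B_{K_\ell}|\,/\,n^{\,k_1+\cdots+k_\ell-\ell/2}$.

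Finally, Lemma \ref{lem:B_{P_l}} gives $|B_{K_\ell}|=o\big((n+p)^{k_1+\cdots+k_\ell-\ell/2}\big)$ for $\ell\ge 3$, and since $p/n\to\lambda$ we have $n+p=O(n)$ and hence $(n+p)^{k_1+\cdots+k_\ell-\ell/2}=O\big(n^{k_1+\cdots+k_\ell-\ell/2}\big)$, the exponent being positive because $k_r\ge 1$ forces $\sum_r k_r-\ell/2\ge \ell/2$. Therefore the whole expression is $o(1)$, as claimed. The genuine combinatorial difficulty, namely counting clusters of length $\ell\ge 3$ and showing the count is of smaller order than $(n+p)^{\sum_r k_r-\ell/2}$, is entirely absorbed into Lemma \ref{lem:B_{P_l}}; given that input, the only delicate point here is the vanishing argument of the first step, where one must verify that the centering constants $\E(x_{J_r})$ do not create surviving cross-terms and thereby spoil the reduction to multiplicity at least two.
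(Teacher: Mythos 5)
Your proposal is correct and follows essentially the same route as the paper's proof: reduce the sum to the set $B_{K_\ell}$ via the mean-zero/independence argument, bound each summand uniformly by a constant depending only on $k_1,\ldots,k_\ell$ through the moment bounds of Assumption I, and then invoke Lemma \ref{lem:B_{P_l}} together with $p=O(n)$. Your treatment of the centering constants $\E(x_{J_r})$ in the vanishing step is in fact more explicit than the paper's one-line justification, but the argument is the same.
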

\begin{proof} First note that $\E(x_i)=0$ for each $i$ therefore  $\E\Big[\prod_{r=1}^{\ell}\Big(x_{J_r} - \E(x_{J_r})\Big)\Big]$ will be non-zero only if each $x_i$ appears at least twice in the collection $\{ x_{j^{r}_{1}}, x_{j^{r}_{2}}, \ldots ,x_{j^{r}_{2k_r}} ; 1\leq r\leq \ell\}$. Thus
	\begin{equation}\label{eqn:equality_reduction_rev}
		\sum_{J_1 \in A_{k_1}, \ldots, J_\ell\in A_{k_\ell}, \atop  \{J_1, \ldots, J_\ell \} \text{ forms a cluster}} \E\Big[\prod_{r=1}^{\ell}\Big(x_{J_r} - \E(x_{J_r})\Big)\Big] =\sum_{(J_1, J_2,\ldots, J_\ell)\in B_{K_\ell}} \hspace{-3pt} \E\Big[\prod_{r=1}^{\ell}\Big(X_{J_r} - \E(X_{J_r})\Big)\Big],
	\end{equation} 
	where $B_{K_\ell}$ is as in Lemma \ref{lem:B_{P_l}}. By Assumption I, we have  that moments of $x_i$'s are bounded. 
%	\begin{equation*}\label{eqn:higher moment finite for reverse}
%		\E(x^2_i)=1 \mbox{ and } \sup_{i \geq 1}\E(|x_i|^{k})= \alpha_k < \infty \mbox{ for } k \geq 3.
%	\end{equation*}  
	Thus for all $(J_1, J_2, \ldots, J_\ell)\in A_{k_1}\times A_{k_2}\times \cdots \times A_{k_\ell}$, we have 
	\begin{equation}\label{eqn:modulus finite_rev}
		\Big|\E\big[\prod_{r=1}^{\ell}\big(x_{J_r} - \E(x_{J_r})\big)\big]\Big|\leq \beta_\ell,
	\end{equation} 
 where $\beta_\ell>0$ depends only on $k_1,k_2,\ldots,k_\ell$. Now
note from \eqref{eqn:equality_reduction_rev} and \eqref{eqn:modulus finite_rev} that
	\begin{align*}
		\sum_{J_1 \in A_{k_1}, \ldots, J_{\ell} \in A_{k_\ell}, \atop  \{J_1, \ldots, J_\ell \} \text{ forms a cluster}} \Big|\E\big[\prod_{r=1}^{\ell}\big(x_{J_r} - \E(x_{J_r})\big)\big]\Big|
		& \leq  \sum_{(J_1,J_2,\ldots,J_\ell)\in B_{K_\ell}} \beta_{\ell} 
		\ = | B_{K_\ell}| \times \beta_\ell.
	\end{align*}
Thus using Lemma \ref{lem:B_{P_l}} in the above expression, we get   
	\begin{align*}
	\sum_{J_{1} \in A_{k_1}, \ldots, J_{\ell} \in A_{k_\ell}, \atop  \{J_1, \ldots, J_\ell \} \text{ forms a cluster}} \Big|\E\big[\prod_{r=1}^{\ell}\big(x_{J_r} - \E(x_{J_r})\big)\big]\Big| = o \big((n+p)^{k_1+k_2 + \cdots + k_\ell -\frac{\ell}{2} }\big).
	\end{align*}
	Since $p/n \tends \lambda \in (0,\infty)$, $p=O(n)$ and the above expression gives (\ref{equation:maincluster_rev}). This completes the proof of the lemma. 
\end{proof}

Now we are ready to prove Theorem \ref{thm:LES_TT'}  using the above lemmata and Theorem \ref{thm:covarTT'}. 

\begin{proof}[Proof of Theorem \ref{thm:LES_TT'}] We use method of moments and  Wick's formula to prove Theorem \ref{thm:LES_TT'}. First recall from the method of moments that, to prove $w_Q \stackrel{d}{\rightarrow} N(0,\sigma_{Q}^2)$, it is sufficient to show that 
	\begin{align} \label{eqn:moment w_Q for reverse}
	\lim_{n \rightarrow \infty \atop p/n \rightarrow \lambda>0} \E[ (w_Q)^\ell] = \E[ (N(0, \sigma^2_{Q}))^\ell ] \ \ \forall \ \ell=1,2, \ldots.
	\end{align}
	So, to prove (\ref{eqn:moment w_Q for reverse}), it is enough to show that, for $k_1, k_2, \ldots , k_\ell \geq 1$,
	\begin{equation}\label{eqn:limE[w1+k]}
	\lim_{n \rightarrow \infty \atop p/n \rightarrow \lambda>0}  \E[w_{k_1}w_{k_2} \cdots w_{k_\ell}]=\E[N_{k_1}N_{k_2} \cdots N_{k_\ell}],	
	\end{equation}
	where $\{N_{k}\}_{k \geq 1}$ is a centred Gaussian family with covariance $\sigma_{k_1,k_2}$ as in (\ref{eqn:sigma_p,q_TT'-sym}).
	%that is, $\E[N_{p},N_{q}]= \sigma_{p,p}$, where  

	Note from the trace formula (\ref{def:trace_Tn_A}) that
	\begin{align*}
		w_{k_r}  &= \frac{1}{\sqrt{n}} \Big(\Tr(T_{n\times p} T'_{n \times p})^{k_r} - \E[\Tr(T_{n\times p} T'_{n \times p})^{k_r}]\Big) \\
		&= \frac{1}{n^{k_r +\frac{1}{2}}} \sum_{i_r=1}^{n} \sum_{A_{k_r}} \big( a_{J_{k_r}} - \E[a_{J_{r}}]\big)  I(i_r,J_{r},p,n),
	\end{align*}
where $J_{r}=(j^{r}_{1},j^{r}_{2},\ldots, j^{r}_{2k_r})\in A_{k_r}$, $A_{k_r}$ as in (\ref{eqn:A_k}), $a_{J_{r}} = \prod_{s=1}^{2k_r} a_{j_{s}^r}$ and $ I(i_r,J_{r},p,n)$ is as in (\ref{eqn:I(i,j,p,n)}).
	Therefore 
	\begin{align}\label{eqn:expectation_thm2_rev}
		&\quad \E[w_{k_1}w_{k_2} \cdots w_{k_\ell}] \nonumber \\
		& \quad = \frac{1}{n^{k_1 + k_2 + \cdots +k_\ell + \frac{\ell}{2}}} \sum_{i_1, \ldots, i_\ell=1}^{n} \sum_{A_{k_1}, \ldots, A_{k_\ell}} \E\Big[  \prod_{r=1}^{\ell} (a_{J_{r}} - \E[a_{J_{r}}]) \Big] \prod_{r=1}^{\ell} I(i_r,J_{r},p,n).
	\end{align}

	First note that for fixed $J_{1},J_{2},\ldots,J_{\ell}$, if there exists a $s\in\{1,2,\ldots,\ell\}$ such that $J_{s}$ is not connected with any $J_{r}$ for $r\neq s$, then due to the independence of the entries $\{a_i\}_{i\geq 1}$, we have
	$$\E\Big[  \prod_{r=1}^{\ell} (a_{J_{r}} - \E[a_{J_{r}}]) \Big]=0.$$
	Thus for a non-zero contribution in the limit, each cluster in $\{J_{1},J_{2},\ldots,J_{\ell}\}$ must have length greater than or equal to two, where the notion of cluster is defined in Definition \ref{def:cluster}. Suppose the vectors $J_{1},J_{2},\ldots,J_{\ell}$ decomposes into clusters $T_1,T_2,\ldots,T_s$ with  $|T_i|\geq 2$ for all $1\leq i \leq s$, where $|T_i|$ denotes the length of the cluster $T_i$. Observe that $\sum_{i=1}^s | T_i|=\ell$.   
	
	If there exists  a cluster $T_j$ among $T_1,\ldots,T_s$ such that $|T_j |\geq 3$, then from Theorem \ref{thm:covarTT'} and Lemma \ref{lem:maincluster_TT'}, we have  
	\begin{align*} % \label{eqn:E_odd_w_p} 
		 \frac{1}{n^{k_1 + k_2 + \cdots +k_\ell + \frac{\ell}{2}}} \sum_{i_1, \ldots, i_\ell=1}^{n} \sum_{A_{k_1}, \ldots, A_{k_\ell}} \E\Big[  \prod_{r=1}^{\ell} (a_{J_{r}} - \E[a_{J_{r}}]) \Big] =o(1).
	\end{align*}  
	Since for each $r$, $| I(i_r,J_{r},p,n)| \leq 1$, we have
		\begin{align*} %\label{eqn:E_odd_w_p} 
		\frac{1}{n^{k_1 + k_2 + \cdots +k_\ell + \frac{\ell}{2}}} \sum_{i_1, \ldots, i_\ell=1}^{n} \sum_{A_{k_1}, \ldots, A_{k_\ell}} \Big| \E\Big[  \prod_{r=1}^{\ell} (a_{J_{r}} - \E[a_{J_{r}}]) \Big] \prod_{r=1}^{\ell} I(i_r,J_{r},p,n) \Big| =o(1).
	\end{align*} 
	 Thus, if $\ell$ is odd then there will be a cluster of odd length and hence 
	$$	\lim_{n \rightarrow \infty \atop p/n \rightarrow \lambda>0}  \E[w_{k_1}w_{k_2} \cdots w_{k_\ell}] = 0.$$ 
	
	Now suppose $\ell$ is even. From arguments similar to those of the $\ell$ odd case, we have that the contribution from $J_{1},J_{2},\ldots,J_{\ell}$ to  $ \E[w_{k_1} w_{k_2}\cdots w_{k_\ell}]$ is $O(1)$ only when $J_{1},J_{2},\ldots,J_{\ell}$  decomposes into clusters of length 2. Therefore from \eqref{eqn:expectation_thm2_rev}, we get
	\begin{align} \label{eqn:multisplit}
		& \E[w_{k_1}w_{k_2} \cdots w_{k_\ell}] \nonumber\\
		%& =\lim_{n\to\infty} \frac{1}{n^{p_1 + p_2 + \cdots +p_\ell -\frac{\ell}{2}}} \sum_{A_{2p_1}, A_{2p_2}, \ldots, A_{2p_\ell}} \E\big[ (X_{J_1} - \E X_{J_1}) (X_{J_2} - \E X_{J_2})  \cdots (X_{J_\ell} - \E X_{J_\ell})\big] \nonumber \\
		& =	\frac{1}{n^{k_1 + \cdots +k_\ell + \frac{\ell}{2}}} \sum_{\pi \in \mathcal P_2(\ell)} \prod_{r=1}^{\ell/2}  \sum_{i_{y(r)}, i_{z(r)}=1}^{n} \sum_{A_{k_{y(r)}}, A_{k_{z(r)}}} \E\big[ (a_{J_{y(r)}} - \E [a_{J_{y(r)}}]) (a_{J_{z(r)}} - \E [a_{J_{z(r)}]})\big] \nonumber \\
		& \qquad \qquad \times I(i_{y(r)},J_{y(r)},p,n) I(i_{z(r)},J_{z(r)},p,n) + o(1) \nonumber \\
		& = \sum_{\pi \in P_2(\ell)} \prod_{i=1}^{\ell/2}  \E[w_{k_{y(i)}} w_{k_{z(i)}}] +o(1),
	\end{align}
	where  $\pi = \big\{ \{y(1), z(1) \}, \ldots , \{y(\frac{\ell}{2}), z(\frac{\ell}{2})  \} \big\}\in \mathcal P_2(\ell)$ and $\mathcal P_2(\ell)$ is the set of all pair partition of $[\ell]$.
	Now from Theorem \ref{thm:covarTT'} and (\ref{eqn:multisplit}), we get
	\begin{align*}
		\lim_{n \rightarrow \infty \atop p/n \rightarrow \lambda>0}  \E[w_{k_1}w_{k_2} \cdots w_{k_\ell}]
		 =\sum_{\pi \in P_2(\ell)} \prod_{i=1}^{\ell/2} \lim_{n \rightarrow \infty \atop p/n \rightarrow \lambda>0} \E[w_{k_{y(i)}} w_{k_{z(i)}}]
		& =\sum_{\pi \in \mathcal P_2(\ell)} \prod_{i=1}^{\ell/2} \E[N_{k_{y(i)}} N_{k_{z(i)}} ] \\
		&=\E[ N_{k_1}N_{k_2} \cdots N_{k_\ell} ],
	\end{align*}
	where the last equality arises due to  Wick's formula. This completes the proof of Theorem \ref{thm:LES_TT'}.
\end{proof}

\subsection{\large \bf Fluctuation of $T_{n \times p}$  with non-zero diagonal entries}  \label{sec:proof_thm_TT'_nonzero}

	In  Theorem \ref{thm:LES_TT'}, we considered $a_0\equiv 0$. In this section, we consider the case where $a_0$ a non-zero random variable. First, we define the following notations:
	For $k\geq 1$, define 
	\begin{align*} %\label{eqn:w_p_tilde}
		\tilde{T}_{n \times p} & := T_{n \times p} + \frac{a_0}{\sqrt{n}} I_{n \times p}, \nonumber \\
		\tilde{w}_{k} & := \frac{1}{\sqrt{n}} \big[ \Tr(\tilde{T}_{n \times p}\tilde{T}'_{n \times p})^{k} - \E[\Tr(\tilde{T}_{n \times p} \tilde{T}'_{n \times p})^{k}] \big],
	\end{align*}
	where $T_{n \times p}$ is the symmetric Toeplitz matrix whose entries are $\{\frac{a_i}{\sqrt{n}}\}_{i \geq 0}$ with zero diagonal entries ($a_0=0$) and $I_{n \times p}$ is an $n \times p$ matrix given by $(\delta_{ij})_{i,j=1}^{n,p}$.
	
\begin{theorem}\label{thm:LES_TT'_a0}
 Let $\{a_i\}_{i \geq 0}$ be a sequence of random variables which satisfy Assumption I. Suppose $\tilde{T}_{n \times p}$ is defined as above.
Then for every  $k\geq 1$, as $ n,p \tends \infty$ with $p/n \tends \lambda>0$,
	\begin{equation*}
		\tilde{w}_k \stackrel{d}{\rightarrow} \tilde{N}_k,
	\end{equation*}
	where 
	\begin{equation*} % \label{eqn:tilde_N_p}
		\tilde{N}_{k} = N_k + k M_{k-1} a_0,
	\end{equation*}
	with $\{N_k;  k \geq 1\}$  as in Theorem \ref{thm:LES_TT'} and  $M_k = \frac{1}{n}\lim\limits_{n\to\infty} \E \big[\Tr(T_{n \times p}T'_{n \times p})^{k} \big]$, given in (\ref{eqn:2r-Moment_symTnp}).
\end{theorem}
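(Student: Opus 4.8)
The plan is to regard $\tilde T_{n\times p}=T_{n\times p}+\tfrac{a_0}{\sqrt n}I_{n\times p}$ as an additive diagonal perturbation of the zero-diagonal Toeplitz matrix $T_{n\times p}$ and to isolate the part of $\Tr(\tilde T_{n\times p}\tilde T'_{n\times p})^k$ that carries the single extra variable $a_0$. Writing $S=T_{n\times p}T'_{n\times p}$, $B=T_{n\times p}I'_{n\times p}+I_{n\times p}T'_{n\times p}$ and $C=I_{n\times p}I'_{n\times p}$, one has $\tilde T_{n\times p}\tilde T'_{n\times p}=S+\tfrac{a_0}{\sqrt n}B+\tfrac{a_0^2}{n}C$. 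Expanding the $k$-th power and using cyclicity of the trace, I would write
\begin{align*}
\Tr(\tilde T_{n\times p}\tilde T'_{n\times p})^k=\Tr S^k+\frac{k a_0}{\sqrt n}\Tr\big(S^{k-1}B\big)+\frac{k a_0^2}{n}\Tr\big(S^{k-1}C\big)+\mathcal R,
\end{align*}
where $\mathcal R$ collects all terms containing at least two insertions of $B$ or $C$. Equivalently, via the trace formula of Lemma \ref{lem:trac_TT^t} applied to $\tilde T_{n\times p}$ (whose input sequence is $\{a_i/\sqrt n\}_{i\ge0}$, now with $a_0\neq0$), this is the split of the index sum according to the number of gap-indices $j_u$ that equal $0$: the ``no zero gap'' part reproduces exactly $\Tr S^k$ and hence, after centering and scaling, the variable $w_k$ of Theorem \ref{thm:LES_TT'}. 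Since $a_0$ is independent of $\{a_i\}_{i\ge1}$, centering $\tilde w_k$ leaves $w_k$ intact, so the first task is the reduction $\tilde w_k=w_k+\tfrac{k a_0}{n}\Tr(S^{k-1}B)+o_P(1)$.

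The heart of the proof is to show that the first-order insertion concentrates, namely
\[
\frac1n\Tr\big(S^{k-1}B\big)=\frac1n\Tr\big((T_{n\times p}T'_{n\times p})^{k-1}(T_{n\times p}I'_{n\times p}+I_{n\times p}T'_{n\times p})\big)\convp M_{k-1},
\]
and that the remaining insertions are negligible, i.e. $\frac1{\sqrt n}\big(\tfrac{k a_0^2}{n}\Tr(S^{k-1}C)-\E[\cdots]\big)=o_P(1)$ and $\frac1{\sqrt n}(\mathcal R-\E\mathcal R)=o_P(1)$. For the concentration I would expand $\Tr(S^{k-1}B)$ by Lemma \ref{lem:trac_TT^t} into a sum over indices $(i,J)$ in which exactly one Toeplitz factor is replaced by the diagonal ($\delta$-type) move coming from $I_{n\times p}$; running the pair-partition/Riemann-sum analysis of Theorem \ref{thm: Moment TT^t} on the remaining $k-1$ factors should identify the leading term with the integral $(\ref{eqn:2r-Moment_symTnp})$ at level $r=k-1$, that is with $M_{k-1}$, while the fluctuation is controlled by the cluster estimates of Lemma \ref{lem:B_{P_l}}. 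The multiplicity $k$ is the number of positions at which the diagonal insertion can occur, and pulling the independent factor $a_0$ out in front yields $\frac{k a_0}{n}\Tr(S^{k-1}B)\convp kM_{k-1}a_0$.

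Finally I would assemble the limit. By Theorem \ref{thm:LES_TT'}, $w_k\convd N_k$, a Gaussian variable that is a function of $\{a_i\}_{i\ge1}$ alone and hence independent of $a_0$; combining this with the in-probability limit of the correction term and Slutsky's theorem (in the joint form for $(w_k,a_0)$, whose limit is $(N_k,a_0)$ with $N_k$ independent of $a_0$) gives $\tilde w_k\convd N_k+kM_{k-1}a_0=\tilde N_k$, as claimed. A parallel route, closer to the proof of Theorem \ref{thm:LES_TT'}, is to compute the joint limiting moments $\lim\E[\tilde w_{k_1}\cdots\tilde w_{k_\ell}]$ directly and match them, through Wick's formula together with the extra $a_0$-generated covariances $k_1k_2M_{k_1-1}M_{k_2-1}\E[a_0^2]$, to the moments of $N_k+kM_{k-1}a_0$. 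The step I expect to be the main obstacle is the concentration statement for $\tfrac1n\Tr(S^{k-1}B)$: one must verify carefully, using the parity of the gap-indices and the Dirac constraint $\delta_0(\sum_q(-1)^qj_q)$ exactly as in Cases I--III of Theorem \ref{thm:covarTT'}, that among all index configurations with a single diagonal insertion precisely those that pair-match the remaining $2(k-1)$ indices survive the normalization and reassemble into $M_{k-1}$, all other configurations being $o(1)$.
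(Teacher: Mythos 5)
The paper gives no actual proof of this theorem (it only remarks that the argument is ``similar to the proof of Theorem \ref{thm:LES_TT'} with some technical changes''), so your proposal has to stand on its own; it does not, because its central claim is false. Your reduction $\tilde w_k = w_k + \tfrac{k a_0}{n}\Tr(S^{k-1}B) + (\text{terms from } C \text{ and } \mathcal R)$ is fine, and the $C$- and $\mathcal R$-terms are indeed negligible after centering. The fatal step is the concentration statement $\tfrac1n \Tr(S^{k-1}B) \convp M_{k-1}$, which you yourself flag as the main obstacle: it fails, already at $k=1$. There $\Tr(S^{0}B)=\Tr B = 2\sum_{i\le \min(n,p)} T_{ii} = 0$ identically, because $T_{n\times p}$ has zero diagonal, whereas $M_0=1$. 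For general $k$ the failure is structural: by Lemma \ref{lem:trac_TT^t}, $\Tr(S^{k-1}B)$ is a sum, normalized by $n^{-(2k-1)/2}$, whose summands carry an \emph{odd} number $2k-1$ of mean-zero factors $a_{j_q}$. An odd number of mean-zero factors cannot be exhausted by pair-matchings; any non-vanishing expectation needs a block of size at least $3$, and the Dirac constraint then removes one more degree of freedom (the coefficient of the size-$3$ block in $\sum_q (-1)^q j_q$ is a sum of three $\pm 1$'s, hence non-zero). This gives $\E[\Tr(S^{k-1}B)] = O(n^{-1/2})$, while the cluster counting for the second moment gives $\var[\Tr(S^{k-1}B)] = O(n)$; hence $\tfrac1n\Tr(S^{k-1}B) \convp 0$, not $M_{k-1}$.

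The heuristic you invoke, that the diagonal insertion removes one factor and the remaining factors reassemble into $M_{k-1}$, is a square-matrix phenomenon: for an $n\times n$ Toeplitz matrix $A$ one has $\Tr(A^{k-1}I_n)=\Tr A^{k-1}\approx n M_{k-1}$, which is exactly where the formula $N_k + k M_{k-1}a_0$ comes from in the band-Toeplitz setting of \cite{m&s_toeplitz_2020} that the authors cite. In the rectangular setting, inserting $I_{n\times p}$ does not close up a full trace: $B = T_{n\times p}I_{n\times p}' + I_{n\times p}T_{n\times p}'$ is itself a centered random matrix with entries of order $n^{-1/2}$, not a multiple of the identity, and $\Tr(S^{k-1}B)$ is $O_P(\sqrt n)$ rather than $\Theta(n)$. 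Consequently your decomposition, carried out correctly, yields $\tilde w_k = w_k + o_P(1) \convd N_k$ with no $a_0$-term at all, so this route cannot produce the stated limit $N_k + kM_{k-1}a_0$; and the same parity obstruction defeats your alternative moment-method route, since the extra covariances $k_1k_2M_{k_1-1}M_{k_2-1}\E[a_0^2]$ would have to come from configurations in which one index vanishes in each trace and the remaining $2k_i-1$ indices pair among themselves, which is impossible. This is a genuine gap: the mechanism your proof relies on does not exist in the $XX'$ setting, and your own computation, once corrected, is in direct tension with the limit asserted in the theorem, so the proposal cannot be completed as written.
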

The idea of the proof of Theorem \ref{thm:LES_TT'_a0} is similar to the proof of Theorem \ref{thm:LES_TT'} with some technical changes. We skip the proof. Note from Remark \ref{rem: Mr >0} that $M_k$ is non-zero for all $k \geq 1$. Therefore $\tilde{N}_{k}$ will be Gaussian only when $a_0$ is Gaussian.
	% and \red{independent of $\{a_i\}_{i \geq 1}$}.
	%If $\{a_0;t\ge 0\}$ is a normal distribution, then the limit in Theorem \ref{thm:LES_TT'} will be Gaussian.

\section{\large \bf \textbf{Symmetric Toeplitz with  Brownian motion entries}}\label{sec:T_np_process}
In this section, we consider time-dependent symmetric Toeplitz matrix with  Brownian motion entries ($T_{n\times p}(t)$). In Section \ref{sec:thm_TT'(t)_nonzero}, we  study the process convergence of the linear eigenvalue statistics of $T_{n\times p}(t) T'_{n \times p}(t)$ when diagonal entries are zero. For non-zero diagonal entries, the linear eigenvalue statistics of  $T_{n\times p}(t) T'_{n \times p}(t)$ is discussed in Section \ref{sec:thm_TT'(t)_nonzero}.

\subsection{\large \bf Fluctuation of $T_{n \times p}(t)$  with diagonal entries as zero}  \label{sec:thm_TT'(t)_zero} 
In this section, we prove Theorem \ref{thm:LES_Tp}. We use standard results from process convergence to establish the result. Note that, 
to establish the process convergence of $\{w_{k}(t);t \geq 0\}$, it is sufficient to show the finite dimensional convergence of $\{w_{k}(t);t \geq 0\}$ and the tightness of the process $\{w_{k}(t);t \geq 0\}$. For more details on process convergence, see (Chapter 1.4 ,\cite{Ikeda1981}).
% That is, we have to prove the following two propositions: 

\subsubsection{\large \bf Finite dimensional convergence:} We prove the following proposition.
\begin{proposition} \label{pro:finite convergence}
Let $ k\geq 1$ and $0<t_1<t_2< \cdots <t_r$. Then as $n,p \tends \infty$ and $p/n \rightarrow \lambda$,
	\begin{equation*}
		(w_{k}(t_1), w_{k}(t_2), \ldots , w_{k}(t_r)) \stackrel{d}{\rightarrow} (N_{k}(t_1), N_{k}(t_2), \ldots , N_{k}(t_r)),
	\end{equation*}	
	where $\{N_{k}(t);t\geq 0, k\geq 1\}$ are zero mean Gaussian processes with covariance structure as in \eqref{eqn:covTT'(t)}.
\end{proposition}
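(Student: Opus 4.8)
The plan is to establish finite-dimensional convergence by the multivariate method of moments together with Wick's formula, following the scheme of the proof of Theorem \ref{thm:LES_TT'}, the only genuinely new feature being the Brownian time-correlation of the entries. To prove that $(w_k(t_1),\ldots,w_k(t_r))$ converges to a centred Gaussian vector it suffices to show that, for every $\ell\geq 1$ and every choice of indices $m_1,\ldots,m_\ell\in\{1,\ldots,r\}$,
$$\lim_{n\to\infty \atop p/n\to\lambda} \E\bigl[w_k(t_{m_1})\cdots w_k(t_{m_\ell})\bigr] = \sum_{\pi\in\PP_2(\ell)}\ \prod_{\{u,v\}\in\pi} C\bigl(t_{m_u},t_{m_v}\bigr),$$
where $C(s,t):=\lim\Cov(w_k(s),w_k(t))$, since the right-hand side is exactly the $\ell$-th joint moment prescribed by Wick's formula for the Gaussian family determined by the covariance $C$. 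Thus the argument splits into (a) a combinatorial reduction showing that only pair-clusters survive in the limit, so that the joint moments factor as displayed, and (b) the computation of the limiting pairwise covariance $C(s,t)$ and its identification with the covariance structure in \eqref{eqn:covTT'(t)}.

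For part (a) I would start from the trace formula of Lemma \ref{lem:trac_TT^t}, expressing each $w_k(t_{m_u})$ as a normalised sum over vectors $J_u\in A_k$ of centred products $b_{J_u}(t_{m_u})-\E[b_{J_u}(t_{m_u})]$ weighted by $I(i_u,J_u,p,n)$. Since the Brownian motions $\{b_j(\cdot)\}_{j\geq 1}$ are independent across the integer index $j$ and jointly Gaussian (hence mean zero) for a common $j$, two factors interact only when their index multisets $S_{J_u}$ overlap, which is precisely the notion of connectedness and cluster in Definitions \ref{def:connected}--\ref{def:cluster}. Because $t_1,\ldots,t_r$ form a fixed finite set, the moments $\E|b_j(t_m)|^q$ are uniformly bounded, so the cardinality estimate of Lemma \ref{lem:B_{P_l}} and its consequence Lemma \ref{lem:maincluster_TT'} apply verbatim (their proofs use only mean-zero-ness and uniform moment bounds, both time-independent). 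Hence any cluster of length $\geq 3$ is asymptotically negligible, odd $\ell$ gives a vanishing limit, and for even $\ell$ the surviving contributions come exactly from decompositions of $\{J_1,\ldots,J_\ell\}$ into pair-clusters, yielding the factorisation above with each pair contributing $\lim\Cov(w_k(t_{m_u}),w_k(t_{m_v}))$.

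For part (b) I would repeat the covariance analysis of Theorem \ref{thm:covarTT'} with the unit variance replaced by the Brownian covariance $\E[b_j(s)b_j(t)]=\min(s,t)$ and the fourth moment replaced by its Gaussian (Isserlis) value. The same case distinction on the number of blocks applies: the cross-matched pair-partitions in $\PP_2(2k,2k)$ give the $f_I^{\pm}$-type integrals and the one-quadruple partitions in $\PP_{2,4}(2k,2k)$ give the $f_{II}^{\pm}$-type integrals, but now each matched pair and the size-four block carries factors $\min(s,t)$ according to which of the two time points its indices belong to. Collecting these factors and passing to the Riemann-sum limit produces the time-weighted integrals appearing in \eqref{eqn:covTT'(t)}.

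The main obstacle, and the only new computation, is the bookkeeping of these time factors in part (b). For the unique size-four block of a $\PP_{2,4}$-partition, two of its positions sit in the $w_k(s)$-half (time $s$) and two in the $w_k(t)$-half (time $t$), so the same index $j$ contributes the correlated mixed moment, which by Isserlis equals $\E[b_j(s)^2 b_j(t)^2]=st+2\min(s,t)^2$. One must resolve this correctly: the $st$ part is cancelled by the centring subtraction of the $\E[w_k(s)]$--$\E[w_k(t)]$ type terms, while the genuine excess $2\min(s,t)^2$ supplies the coefficient of the $f_{II}^{\pm}$ integrals, replacing the constant $\kappa-1=2$ of the static case. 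Verifying that these time-weights and combinatorial coefficients line up exactly with \eqref{eqn:covTT'(t)} is where the care is required; the cluster reduction of part (a) is routine given Lemmas \ref{lem:B_{P_l}} and \ref{lem:maincluster_TT'}.
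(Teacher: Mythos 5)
Your proposal is correct, and its overall skeleton (Cram\'er--Wold/method of moments, cluster reduction via Lemmas \ref{lem:B_{P_l}} and \ref{lem:maincluster_TT'}, Wick factorisation into pairwise limits) is exactly the paper's proof of Proposition \ref{pro:finite convergence}, which likewise reduces everything to joint-moment convergence and defers the combinatorics to the proof of Theorem \ref{thm:LES_TT'}; your remark that those lemmas use only mean-zero-ness and uniform moment bounds, hence tolerate non-unit variances, is also the (implicit) justification the paper relies on. Where you genuinely diverge is part (b), the covariance ingredient, which in the paper is Lemma \ref{lem:covarTT'(t)}: the paper never computes mixed moments of correlated Gaussians. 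Instead it sets $U:=T_{n\times p}(t_1)$ and $V:=T_{n\times p}(t_2)-T_{n\times p}(t_1)$, exploits independence of Brownian increments, expands $\Tr\{(U+V)(U+V)'\}^{k_2}$ binomially, and thereby reduces each term to the \emph{static} covariance computation of Theorem \ref{thm:covarTT'} applied to independent entries distributed as $\sqrt{t_1}\,a_i$ and $\sqrt{t_2-t_1}\,a_i'$ with $\kappa=3$; this is precisely why \eqref{eqn:covTT'(t)} comes out organised in powers $t_1^{k_1+r}(t_2-t_1)^{k_2-r}$ with binomial coefficients and product partitions $(\pi_1,\pi_2)$. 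Your Isserlis route is sound --- the key identities you state check out: a cross-matched pair contributes $\E[b_j(s)b_j(t)]=\min(s,t)$, the quadruple block contributes $\E[b_j(s)^2b_j(t)^2]=st+2\min(s,t)^2$ with the $st$ part cancelled by the centring, leaving $2\min(s,t)^2$ in place of $\kappa-1$ --- but it delivers the limit as a sum over partitions of $[2(k_1+k_2)]$ weighted by powers of $\min(s,t)$ and $t$ (the weight depending on the number of cross-matched pairs), so identifying it with the stated form \eqref{eqn:covTT'(t)} still requires expanding $t_2=t_1+(t_2-t_1)$ and regrouping partitions, i.e.\ the very bookkeeping you flag as delicate. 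In short: your approach is more direct and would generalise to entry processes that are jointly Gaussian but lack independent increments, whereas the paper's increment decomposition does the time-bookkeeping automatically and lets the static theorem be quoted verbatim; either way the Proposition follows once the pairwise limit is in hand.
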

Before proving Proposition \ref{pro:finite convergence}, we first derive the limiting covariance structure of $\{w_{k}(t);t \geq 0\}$. 
Recall the relation $\sim_\pi$ defined in Definition \ref{defn:partition}. Observe that $\sim_\pi$ determines the partition uniquely. Using this idea, we now construct a new partition $\pi=(\pi_1,\pi_2)$ from two given partitions $\pi_1$ and $\pi_2$.
\begin{definition}\label{remark:pi1,pi2}
For a partition $\pi_1$ of $[n]$ and a partition $\pi_2$ of $[m]$, we construct the partition $\pi=(\pi_1,\pi_2)$ of $[n+m]$ as the partition with the following block structure.
	\begin{enumerate}[(i)]
		\item 	For $1 \leq i,j \leq n$, $i \sim_\pi j$ if $i \sim_{\pi_1} j$.
		\item For $n+1 \leq i,j \leq n+m$, $i \sim_\pi j$ if $(i-n) \sim_{\pi_2} (j-n)$.
		\item For $1 \leq i \leq n < j \leq n+m$, $i \nsim_\pi j$.
	\end{enumerate}
\end{definition}
\begin{example}
	%\red{Rewrite this, with details and recalling all the notation}
	Let $r \in \{0,2, 4, \ldots, 2k_2\}$ be a fixed integer. Note that for the partition $\pi=(\pi_1,\pi_2)$, $i \sim_\pi j$ if and only if $i \sim_{\pi_1} j$ or $(i-n) \sim_{\pi_2} (j-n)$. Thus for $(\pi_1,\pi_2) \in \PP_{2,4}(2k_1,2r) \times \PP_2(2(k_2+r))$, the partition $\pi=(\pi_1,\pi_2)$ belongs to $\PP_{2,4}(2k_1,2k_2)$ and for $(\pi_1,\pi_2) \in \PP_{2}(2k_1,2r) \times \PP_2(2(k_2+r))$, the partition $\pi=(\pi_1,\pi_2)$ belongs to $\PP_{2}(2k_1,2k_2)$. 
\end{example}
The following lemma provides the limiting covariance structure of $\{w_{k}(t);t \geq 0\}$.
\begin{lemma} \label{lem:covarTT'(t)}
	 Let $\{b_i\}_{i \in \N}$ be an independent sequence of Brownian motions and let $b_0 \equiv 0$. Suppose $T_{n \times p}$ is the $n \times p$ time-dependent symmetric Toeplitz matrix with input sequence $\{\frac{b_i}{\sqrt{n}}\}_{i \geq 0}$ and let $w_{k}(t)$ be as in (\ref{eqn:w_p(t)_Tp}). Then for $0<t_1\leq t_2$ and  positive integers $k_1$, $k_2$,
	\begin{align}\label{eqn:covTT'(t)}
		& \lim_{n \rightarrow \infty \atop p/n \rightarrow \lambda>0} \Cov(w_{k_1}(t_1), w_{k_2}(t_2))  \ = \sum_{r=1}^{k_2} \binom{2k_2}{2r} t_1^{{k_1+r}} (t_2-t_1)^{{k_2-r}} \mathcal{R}(p,r),
	\end{align}
%where $M_{2(k_2-r/2)}$ is as in (\ref{eqn:2r-Moment_symTnp}) and
%\begin{align*}
%\theta_1(k_1,r)	&=\sum_{\pi \in \PP_2(2k_1,r)}  (f^{-}_I(\pi)+f^{+}_I(\pi)) + 2 \sum_{\pi \in \PP_{2,4}(2k_1,r)}  (f^-_{II}(\pi)+f^+_{II}(\pi)),
%%\theta_2(r,k_2)	&=\sum_{\pi \in \PP_2(2k_1,r)}  (f^{-}_I(\pi)+f^{+}_I(\pi)) + 2 \sum_{\pi \in \PP_{2,4}(2k_1,r)}  (f^-_{II}(\pi)+f^+_{II}(\pi)),
%\end{align*}
	where 
	$$\mathcal{R}(p,r)=\sum_{\pi=(\pi_1,\pi_2) \in \atop \PP_2(2k_1,2r) \times \PP_2(2(k_2-r))}  (f^{-}_I(\pi)+f^{+}_I(\pi)) + 2 \sum_{\pi =(\pi_1,\pi_2) \in \atop \PP_{2,4}(2k_1,2r) \times \PP_2(2(k_2-r))}  (f^-_{II}(\pi)+f^+_{II}(\pi)),$$ 
	with the notion $(\pi_1,\pi_2) \in \PP_2(2k_1,2r) \times \PP_2(2(k_2-r))$ as in Definition \ref{remark:pi1,pi2}; $f^-_I(\pi),f^+_I(\pi), f^-_{II}(\pi)$ and $f^+_{II}(\pi)$ as in (\ref{eqn:f_Ipi-_T}), (\ref{eqn:f_Ipi+_T}),  (\ref{eqn:f_II-pi_T}) and (\ref{eqn:f_II+pi_T}), respectively.
\end{lemma}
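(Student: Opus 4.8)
The plan is to follow the scheme of the proof of Theorem~\ref{thm:covarTT'} verbatim at the combinatorial level, and to graft onto it the two-time structure of Brownian motion through the increment decomposition. Writing $w_{k_1}(t_1)$ and $w_{k_2}(t_2)$ by means of the trace formula (\ref{def:trace_Tn_A}), I would first reach, exactly as in (\ref{eqn: Ew_k_1 w_k_2 first})--(\ref{eqn: Cov_TT^t one}),
\begin{align*}
\Cov\big(w_{k_1}(t_1),w_{k_2}(t_2)\big) = \frac{1}{n^{k_1+k_2+1}} \sum_{i,i'=1}^{n} \sum_{\pi \in \widehat{\mathcal P}(2k_1+2k_2)} \sum_{J \in \Pi(\pi)} U_t(J)\, g(i,i',J_1,J_2,p,n),
\end{align*}
where $J_1$ labels the $t_1$-factors, $J_2$ the $t_2$-factors, $g$ is as in (\ref{eqn:g(J,Jprime,p,n)}), and $U_t(J)$ is the two-time centred expectation replacing the quantity in square brackets in (\ref{eqn: Ew_k_1 w_k_2 first}). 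The first observation is that the degree-of-freedom analysis of Cases~I--III in the proof of Theorem~\ref{thm:covarTT'} uses only boundedness of moments and the two $\delta$-constraints, neither of which is affected by passing from i.i.d.\ entries to Brownian entries; hence, as there, only $\pi\in\PP_2(2k_1,2k_2)$ and $\pi\in\PP_{2,4}(2k_1,2k_2)$ survive in the limit.

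The substance of the argument is the time bookkeeping, for which I would write $b_i(t_2)=b_i(t_1)+\tilde b_i$ with $\tilde b_i:=b_i(t_2)-b_i(t_1)$ independent of the family $\{b_\cdot(t_1)\}$ and $\E[\tilde b_i^2]=t_2-t_1$. Expanding each of the $2k_2$ factors on the $w_{k_2}$-side into its $t_1$-part and its increment, independence forces the increments to pair only among themselves, while every $t_1$-part that carries a cross-matching of $\pi$ must join the $2k_1$ factors on the $w_{k_1}$-side. If $2r$ of the $w_{k_2}$-positions are taken as $t_1$-parts, then these together with the $w_{k_1}$-factors form a cross-matched partition $\pi_1\in\PP_2(2k_1,2r)$ (resp.\ $\PP_{2,4}(2k_1,2r)$) and the remaining $2(k_2-r)$ increments form a pair-partition $\pi_2\in\PP_2(2(k_2-r))$, i.e.\ $\pi=(\pi_1,\pi_2)$ in the sense of Definition~\ref{remark:pi1,pi2}. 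Each of the $k_1+r$ pairs built from $t_1$-parts carries covariance $\E[b_\cdot(t_1)^2]=t_1$ and each of the $k_2-r$ increment pairs carries $\E[\tilde b_\cdot^2]=t_2-t_1$, giving the time-weight $t_1^{k_1+r}(t_2-t_1)^{k_2-r}$; the number of ways of selecting which $2r$ of the $2k_2$ positions carry the $t_1$-part is $\binom{2k_2}{2r}$. As for the value-weights, every cross-pair of $\PP_2$-type contributes $1$, whereas for the size-four block of $\PP_{2,4}$-type Wick's formula (equivalently $\E[Z^4]-(\E[Z^2])^2=2(\E[Z^2])^2$ for a centred Gaussian $Z$) leaves exactly the two cross Wick-pairings, each of weight $t_1^2$, the within-within pairing being removed by the centering; this produces the coefficient $2$ before the $\PP_{2,4}$-sum, which is precisely the specialisation $\kappa-1=2$ of Theorem~\ref{thm:covarTT'} to Gaussian entries.

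With the combinatorics fixed, the index sums are treated exactly as in Theorem~\ref{thm:covarTT'} and Theorem~\ref{thm: Moment TT^t}: under the sign relations (\ref{eqn:tau condition1})--(\ref{eqn:tau condition3}) for the $\PP_{2,4}$-type and (\ref{eqn: epsilon_cond (J_1,J_2)}) for the $\PP_2$-type the two delta constraints are automatically satisfied, and the normalised Riemann sums converge, as $n,p\to\infty$ with $p/n\to\lambda$, to $f^{\pm}_I(\pi)$ and $f^{\pm}_{II}(\pi)$, the two signs recording whether $(J_1,J_2)$ or $(J_1,-J_2)$ realises the matching. Summing over all canonically arranged $(\pi_1,\pi_2)$ yields $\mathcal R(p,r)$, and assembling the time-weights with the $\binom{2k_2}{2r}$ placements gives (\ref{eqn:covTT'(t)}); as a consistency check, setting $t_1=t_2=t$ kills all terms except $r=k_2$ and recovers $t^{k_1+k_2}$ times the Gaussian specialisation of Theorem~\ref{thm:covarTT'}.

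I expect the genuine obstacle to be the placement-invariance hidden in the factor $\binom{2k_2}{2r}$. A priori the limiting integrals $f^{\pm}_I,f^{\pm}_{II}$ depend, through $\epsilon_\pi$, $\tau_\pi$, and the nested indicator functions, on which $2r$ of the $2k_2$ positions were designated as $t_1$-parts, so one must verify that relabelling the integration variables $y_i$ (with the attendant sign changes carried by $\epsilon_\pi$ and $\tau_\pi$) maps the partition set attached to one placement bijectively onto that attached to another while preserving the integrand, so that each of the $\binom{2k_2}{2r}$ placements contributes the same $\mathcal R(p,r)$. Carrying the parity and sign bookkeeping consistently through this relabelling, together with confirming that the mixed increment--$t_1$ cross terms have vanishing expectation (so that no configuration is counted twice and the sum genuinely starts at $r=1$), is where the real work lies.
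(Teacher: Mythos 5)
Your proposal follows essentially the same route as the paper's proof: there too one decomposes $T_{n\times p}(t_2)$ into $U=T_{n\times p}(t_1)$ plus the independent increment $V=T_{n\times p}(t_2)-T_{n\times p}(t_1)$, expands the covariance with a binomial placement count over how many of the $2k_2$ positions carry $u$-factors, rescales the surviving Gaussian entries to variables obeying Assumption I with $\kappa=3$, and then invokes the degree-of-freedom and Riemann-sum machinery of Theorem \ref{thm:covarTT'} to identify the limits as $f_I^{\pm}(\pi)$ and $f_{II}^{\pm}(\pi)$, the coefficient $2=\kappa-1$ arising exactly as you say from the Gaussian fourth moment. The placement-invariance you single out as the remaining obstacle is precisely what the paper assumes implicitly when it writes the factor $\binom{2k_2}{r}$ against the canonical placement (the $u$-positions taken first, as in Definition \ref{remark:pi1,pi2}), so your plan is, if anything, more explicit about that point than the paper's own argument.
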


\begin{proof}
First we define $U:=T_{n \times p}(t_1)$ and $V:=T_{n \times p}(t_2)-T_{n \times p}(t_1)$, where $T_{n \times p}(t_1)$ and $T_{n \times p}(t_2)$ are the time-dependent symmetric Toeplitz matrices with Brownian motion entries at times $t_1$ and $t_2$, respectively. Further for $0 \leq r \leq k_2$, we define
\begin{align} \label{eqn:J,J',U_j}
	&J =(j_1,j_2, \ldots, j_{2k_1}), \ J^{\prime} = (j_1^{\prime},j_2^{\prime},\ldots ,j_{2k_2}^{\prime}),	\
	J_{1}^{\prime}=\left(j_{1}^{\prime},j_2^{\prime}, \ldots, j_{2r}^{\prime}\right),  \ J_{2}^{\prime}=\left(j_{2r+1}^{\prime}, \ldots, j_{2k_2}^{\prime}\right), \\  
	&u_{J}   = u_{j_{1}} \cdots u_{j_{2k_1}}, \  u_{J_{1}^{\prime}}=u_{j_{1}^{\prime}} \cdots u_{j_{2r}^{\prime}} \ \mbox{ and } \ v_{J_{2}^{\prime}}=v_{j_{2r+1}^{\prime}} \cdots v_{j_{2k_2}^{\prime}},\nonumber
\end{align} 
where  $u_{r}=b_{r}(t_1)$ and  $v_{r}=b_{r}(t_2)-b_{r}(t_1)$.	
Now with the above notations and Lemma \ref{lem:trac_TT^t}, $\Cov\left(w_{k_1}\left(t_{1}\right) w_{k_2}\left(t_{2}\right)\right)$ can be written as
\begin{align*} %\label{eqn: cov wp(t) wq(t)}
	&\Cov\left(w_{k_1}\left(t_{1}\right),w_{k_2}\left(t_{2}\right)\right) \nonumber \\
	&=\frac{1}{n^{k_1+k_2+1}} \sum_{r=0}^{2k_2}
	{2k_2 \choose r} \sum_{i,i^\prime=1}^n
	\sum_{J, J^{\prime}}\Big\{\left(\E[u_{J} u_{J_{1}^{\prime}}]-\E[u_{J}] \E[u_{J_{1}^{\prime}}]\right) \E[v_{J_{2}^{\prime}}]\Big\} g(i,i',J,J^\prime,p,n),
\end{align*}
where $J \in A_{k_1}$ and $J^{\prime} \in A_{k_2}$ and $g(i,i',J,J^\prime,p,n)$ is as in (\ref{eqn:g(J,Jprime,p,n)}). Now for a fixed $r$, $0 \leq r \leq 2k_2$, consider 
\begin{align}\label{eqn:J,J_1,J_2}
	& \frac{1}{n^{k_1+k_2+1}} \sum_{i,i^\prime=1}^n \sum_{J, J^{\prime}}\Big\{\left(\E[u_{J} u_{J_{1}^{\prime}}]-\E[u_{J}] \E[u_{J_{1}^{\prime}}]\right) \E[v_{J_{2}^{\prime}}]\Big\}g(i,i',J,J^\prime,p,n) \nonumber\\
	 &=\frac{1}{n^{k_1+k_2+1}} \sum_{\pi}\sum_{i,i^\prime=1}^n \sum_{(J, J^{\prime})\in \Pi(\pi)}\Big\{\left(\E[u_{J} u_{J_{1}^{\prime}}]-\E[u_{J}] \E[u_{J_{1}^{\prime}}]\right) \E[v_{J_{2}^{\prime}}]\Big\}g(i,i',J,J^\prime,p,n),
\end{align}
where the summation $\pi$ is over all partitions of $[2(k_1+k_2)]$ and $\Pi(\pi)$ is as defined in (\ref{eqn: Cov_TT^t one}).

Consider the term
$\left(\E[u_{J} u_{J_{1}^{\prime}}]-\E[u_{J}] \E[u_{J_{1}^{\prime}}]\right) \E[v_{J_{2}^{\prime}}]$. Due to the independence of random variables $\{u_i\}$ and $\{ v_i \}$, this term is non-zero only when the following three conditions are satisfied:

\begin{enumerate}[(a)]
	\item there exists at least one common element between $S_J$ and $S_{J_1^{\prime}}$, that is, $S_J \cap S_{J_1^{\prime}} \neq \emptyset$,
	\item every element of $S_J \cup S_{J_1^{\prime}}$ has cardinality at least 2, and
	\item every element of $S_{J_2^{\prime}}$ has cardinality at least 2,
\end{enumerate}
where the multi-sets $S_{J},S_{J_1^\prime},$ and $S_{J_2^\prime}$ are as defined in (\ref{def:S_|J|}). 

From the proof of Theorem \ref{thm:covarTT'}, it follows that the limit of  (\ref{eqn:J,J_1,J_2}) is non-zero only when the partition $\pi$ belongs to $\PP_{2,4}(2k_1,2k_2)$ or $\pi \in \PP_2(2k_1,2k_2)$. We now find the contribution corresponding to  $\pi \in \PP_{2,4}(2k_1,2k_2)$ and $\pi \in \PP_2(2k_1,2k_2)$ separately. 

First consider the case when $\pi \in \PP_{2,4}(2k_1,2k_2)$ with  $|V_i|=4$. By condition (a), for non-zero contribution in the limit, two elements of $V_i$ must belong to $\{1,2,\ldots , 2k_1\}$ and the other two elements of $V_i$ must belong to $\{2k_1+1,\ldots, 2k_1+r\}$. Note that in this case, conditions (b) and (c) are satisfied only if $r$ is even. Further considering $\pi=(\pi_1,\pi_2)$ as defined in Definition \ref{remark:pi1,pi2}, we get that (\ref{eqn:J,J_1,J_2}) is non-zero only if $\pi_1 \in \PP_{2,4}(2k_1,r)$ and $\pi_2 \in \PP_{2}(2k_2-r)$. By the properties of Brownian motion, $\E u_j^2=t_1, \E v_j^2=(t_2-t_1)$ and $\E u_j^4=3t_1^2$, and in this case
\begin{equation}\label{eqn:u_to_a_2,4}
	\left(\E[u_{J} u_{J_{1}^{\prime}}]-\E[u_{J}] \E[u_{J_{1}^{\prime}}]\right) \E[v_{J_{2}^{\prime}}]=t_1^{k_1+\frac{r}{2}}(t_2-t_1)^{k_2-\frac{r}{2}} 	\left(\E[a_{J} a_{J_{1}^{\prime}}]-\E[a_{J}] \E[a_{J_{1}^{\prime}}]\right) \E[a'_{J_{2}^{\prime}}],
\end{equation}
where  $a_J,a_{J_1^{\prime}}$ and $a'_{J_2^{\prime}}$ are the appropriate product of $a_i,a'_i$ which have the following properties:
 $a_i \distas{D} \frac{1}{\sqrt{t_1}}u_i$, $a'_i \distas{D} \frac{1}{\sqrt{t_2-t_1}}v_i$; $a_i,a'_i$ obeys Assumption I and $\E a_i^4=3$. Here the notion $x_1 \distas{D} x_2$ denotes that  $x_1$ and $x_2$ have same distribution.  From the proof of Theorem \ref{thm:covarTT'} and (\ref{eqn:u_to_a_2,4}), it follows that the summand in (\ref{eqn:J,J_1,J_2}) converges to $2t_1^{k_1+\frac{r}{2}}(t_2-t_1)^{k_2-\frac{r}{2}}\left(f_{II}^-(\pi)+f_{II}^+(\pi)\right)$.

Now consider $\pi \in \PP_{2}(2k_1,2k_2)$. Note that condition (c) implies that the summand in (\ref{eqn:J,J_1,J_2}) is non-zero only if $r$ is even. Furthermore, by condition (c), we get that there is non-zero contribution in limit only if no block of $\pi$ intersects with both $\{1,2,\ldots,(2k_1+r)\}$ and $\{(2k_1+r+1),\ldots
,2k_2\}$. Considering $\pi=(\pi_1,\pi_2)$ as defined in Definition \ref{remark:pi1,pi2}, we get that $\pi_1 \in \PP_{2}(2k_1,r)$ and $\pi_2 \in \PP_{2}(2k_2-r)$. Thus, we get that 
\begin{equation}
	\left(\E[u_{J} u_{J_{1}^{\prime}}]-\E[u_{J}] \E[u_{J_{1}^{\prime}}]\right) \E[v_{J_{2}^{\prime}}]=t_1^{k_1+\frac{r}{2}}(t_2-t_1)^{k_2-\frac{r}{2}} 	\left(\E[a_{J} a_{J_{1}^{\prime}}]-\E[a_{J}] \E[a_{J_{1}^{\prime}}]\right) \E[a'_{J_{2}^{\prime}}],
\end{equation}
where $a_J,a_{J_1^{\prime}}$ and $a'_{J_2^{\prime}}$ are as in (\ref{eqn:u_to_a_2,4}). Again, from the proof of Theorem \ref{thm:covarTT'}, it follows that the summand in (\ref{eqn:J,J_1,J_2}) converge to $t_1^{k_1+\frac{r}{2}}(t_2-t_1)^{k_2-\frac{r}{2}}\left(f_{I}^-(\pi)+f_{I}^+(\pi)\right)$. On combining both the cases, we get (\ref{eqn:covTT'(t)}). This completes the proof of the lemma.
\end{proof}

\begin{proof}[Proof of Proposition \ref{pro:finite convergence}] 
We use the Cram\'er-Wold theorem, Wick formula and the method of moments to prove Proposition \ref{pro:finite convergence}. Note that it is enough to show that, for $0<t_1 \leq t_2 \leq \cdots \leq t_\ell $ and $k_1, k_2, \ldots , k_\ell \geq 1$,
\begin{equation}\label{eqn:limE[w1+k(t)]}
\lim_{n \rightarrow \infty \atop p/n \rightarrow \lambda>0}\E[w_{{k}_1}(t_1) w_{{k}_2}(t_2) \cdots w_{{k}_\ell}(t_\ell)]=\E[N_{{k}_1}(t_1)N_{{k}_2}(t_2) \cdots N_{{k}_\ell}(t_\ell)],
\end{equation}
where $\{N_{k}(t);t\geq 0, k\geq 1\}$ are zero mean Gaussian processes with covariance structure as in \eqref{eqn:covTT'(t)}.
The idea of the proof of (\ref{eqn:limE[w1+k(t)]}) is similar to the proof of (\ref{eqn:limE[w1+k]}), we skip it.
\end{proof}

\subsubsection{\large \bf Tightness:} To establish the tightness of the process $\{w_{k}(t);t \geq 0\}$, we check the conditions of the following Proposition which is a sufficient condition for the tightness of $\{w_{k}(t);t \geq 0\}$. 
\begin{proposition} (Theorem I.4.3, \cite{Ikeda1981}) \label{pro:tight}
	For each $k \geq 1$, there exists positive constants $M$ and $\gamma$ such that
	\begin{equation} \label{eqn:tight3}
		\E{|w_{k}(0)|^ \gamma } \leq M \ \ \ \forall \ n \in \mathbb{N}, 	 
	\end{equation}
	and there exists positive constants $\alpha, \ \beta $ and $M_T$, $T= 1, 2, \ldots,$ such that
	\begin{equation} \label{eq:tight4}
		\E{|w_{k}(t)- w_{k}(s)|^ \alpha } \leq M_T |t-s|^ {1+ \beta} \ \ \ \forall \ n \in \mathbb{N} \ \mbox{and } t,s \in[0,\ T],(T= 1, 2, \ldots,).
	\end{equation}
%	Then $\{ w_{k}(t) ; t \geq 0 \}$ is tight.
\end{proposition}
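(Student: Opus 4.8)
The plan is to verify the two hypotheses (\ref{eqn:tight3}) and (\ref{eq:tight4}) of Proposition \ref{pro:tight} for the process $\{w_k(t);t\geq 0\}$ from (\ref{eqn:w_p(t)_Tp}), and then invoke the cited Kolmogorov--Chentsov type criterion to conclude tightness (which, combined with Proposition \ref{pro:finite convergence}, finishes Theorem \ref{thm:LES_Tp}). Condition (\ref{eqn:tight3}) is immediate: since $b_m(0)=0$ for every $m\geq 1$ and $b_0(t)\equiv 0$, the matrix $T_{n\times p}(0)$ is the zero matrix, so $w_k(0)=0$ and $\E|w_k(0)|^{\gamma}=0\leq M$ for any $\gamma,M>0$ and every $n$. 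The whole difficulty lies in the increment bound (\ref{eq:tight4}).

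For (\ref{eq:tight4}) I would take $\alpha=4$ and aim for $\beta=1$, that is, establish $\E|w_k(t)-w_k(s)|^{4}\leq M_T|t-s|^{2}$ for all $n$ and all $s,t\in[0,T]$. The key structural remark is that for fixed $s<t$, writing $b_m(t)=b_m(s)+\Delta_m$ with $\Delta_m:=b_m(t)-b_m(s)$, both $w_k(t)$ and $w_k(s)$ are polynomials of degree $2k$ in the mutually independent, jointly Gaussian family $\{b_m(s),\Delta_m\}_{m\geq 1}$. Hence the centred increment $w_k(t)-w_k(s)$ lies in a fixed (only $k$-dependent) sum of Wiener chaoses, and by hypercontractivity (equivalence of all $L^p$ norms on a fixed-order Wiener chaos) one gets
\[
\E|w_k(t)-w_k(s)|^{4}\leq C_k\big(\var(w_k(t)-w_k(s))\big)^{2},
\]
with $C_k$ depending only on $k$. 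This reduces the fourth-moment estimate to a single uniform variance bound $\var(w_k(t)-w_k(s))\leq C_T|t-s|$.

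To obtain the variance bound I would rerun the trace-formula and moment-method bookkeeping behind Lemma \ref{lem:covarTT'(t)} and Theorem \ref{thm:covarTT'}, now tracking powers of $|t-s|$. Expanding $\Tr\{T_{n\times p}(t)T'_{n\times p}(t)\}^{k}$ by Lemma \ref{lem:trac_TT^t} and substituting $b_m(t)=b_m(s)+\Delta_m$, every summand of the increment carries at least one factor $\Delta_m$; after squaring and taking expectations, independence of $\{b_m(s)\}$ and $\{\Delta_m\}$ forces the $\Delta$-factors to pair only among themselves, so a configuration with $2j$ increment factors contributes $|t-s|^{j}$ times an $s$-power (bounded by $T^{k-j}$) and an $n$-uniform $O(1)$ coefficient, exactly the Riemann-sum coefficients producing the integrals $f_I^{\pm},f_{II}^{\pm}$. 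The minimal case $j=1$ gives the leading $C_T|t-s|$ term, while every $j\geq 2$ term is dominated on $[0,T]$ by $C_T\,T^{j-1}|t-s|$; summing gives $\var(w_k(t)-w_k(s))\leq C_T|t-s|$. (Alternatively one can avoid hypercontractivity and estimate the fourth moment directly over four index-vectors, but the chaos reduction is far shorter.)

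The main obstacle is making this variance estimate genuinely uniform in $n$ and in $(s,t)\in[0,T]^2$, rather than merely extracting a limit as in Lemma \ref{lem:covarTT'(t)}: one must bound the finite-$n$ combinatorial sums themselves and check that each surviving pairing carries at least one factor $|t-s|$. Here the cluster estimates of Lemma \ref{lem:B_{P_l}} and Lemma \ref{lem:maincluster_TT'} do the work, guaranteeing that clusters of length $\geq 3$ and all higher pairings contribute coefficients that stay bounded as $n\to\infty$, so the total is controlled by $C_T|t-s|$. With (\ref{eqn:tight3}) and (\ref{eq:tight4}) verified, Proposition \ref{pro:tight} delivers the tightness of $\{w_k(t);t\geq 0\}$.
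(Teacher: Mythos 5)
Your proposal is correct, and it verifies condition (\ref{eqn:tight3}) exactly as the paper does ($T_{n\times p}(0)=0$, hence $w_k(0)=0$) and targets the same exponents $\alpha=4$, $\beta=1$ in (\ref{eq:tight4}); however, your route to the increment bound is genuinely different from the paper's. The paper never invokes hypercontractivity: it works directly on the random variables, expanding $(X_n(t))^{k}-(X_n(s))^{k}$ binomially in $\widetilde X_n(t-s)=X_n(t)-X_n(s)$ and $X_n(s)$, and then using Brownian scaling to pull a deterministic factor $\sqrt{t-s}$ out of every term of the trace formula, so that $w_k(t)-w_k(s)$ is equal in distribution to $\sqrt{t-s}$ times a normalized centered sum; the bound $\E|w_k(t)-w_k(s)|^{4}\le M_T(t-s)^{2}$ then follows by showing that the fourth moment of this sum is bounded uniformly in $n$, via a three-case cluster analysis over \emph{four} index vectors (vectors not connected to the rest give zero; two disjoint connected pairs give $O(1)$ via $|B_{K_2}|=O(n^{2k-1})$; a genuine four-vector cluster gives $o(1)$ via $|B_{K_4}|$, both from Lemma \ref{lem:B_{P_l}}). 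Your chaos argument collapses all of the four-vector combinatorics to the pair case: hypercontractivity on a sum of Wiener chaoses of order at most $2k$ is indeed applicable here, since for each fixed $n$ the increment is a centered degree-$2k$ polynomial in the finite jointly Gaussian family $\{b_m(s),\,b_m(t)-b_m(s)\}$, with a constant depending only on $k$, so everything reduces to $\var\bigl(w_k(t)-w_k(s)\bigr)\le C_T|t-s|$; and this variance bound needs only the $\ell=2$ estimate $|B_{K_2}|=O\bigl((n+p)^{2k-1}\bigr)$ together with the observation that every surviving Wick pairing must match the increment factors among themselves, producing at least one factor $\E[(b_m(t)-b_m(s))^{2}]=|t-s|$ (your appeal to Lemma \ref{lem:maincluster_TT'} is superfluous at this point: with only two index vectors no cluster of length $\ge 3$ can occur; the clusters of length $3$ and $4$ only matter in the paper's direct fourth-moment computation). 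The trade-off: your reduction is shorter and avoids both the distributional scaling identity and the four-vector case analysis, at the price of importing hypercontractivity as an external tool and of having to track powers of $|t-s|$ through the pairings; the paper's argument is elementary and self-contained, and by extracting $\sqrt{t-s}$ \emph{before} taking moments it gets the factor $(t-s)^{2}$ for free.
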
 
\begin{proof}[Proof of Proposition \ref{pro:tight}] 
	First recall $w_{k}(t)$ from (\ref{eqn:w_p(t)_Tp}),
	$$ w_{k}(t) = \frac{1}{\sqrt{n}} \bigl\{ \Tr(X_n(t))^{k} - \E[\Tr(X_n(t))^{k}]\bigr\},$$
	where $X_n(t)=T_{n \times p}(t) T'_{n \times p}(t)$.
	Note that $w_{k}(0)= 0$ for all $k \geq 1$. This shows that (\ref{eqn:tight3}) trivially follows.
	
	Now we  prove (\ref{eq:tight4}) for $\alpha = 4$ and $\beta = 1$. 
	Suppose $k \geq 2$ is fixed and $t,s \in[0,T] $, for some fixed $T \in \mathbb{N}$. Then
	\begin{align} \label{eqn:w_p(t-s)}
		w_{k}(t) - w_{k}(s) &=\frac{1}{\sqrt{n}} \big[ \Tr(X_n(t))^{k} - \Tr(X_n(s))^{k} - \E[\Tr(X_n(t))^{k} - \Tr(X_n(s))^{k}]\big].
		%  \E[w_{2p}(t) -w_{p}(s)]^4 &= \frac{1}{n^2} \E \big[ \Tr(T_n(t))^{2p} - \Tr(RC_n(s))^{2p} - \E[\Tr(RC_n(t))^{2p} - \Tr(RC_n(s))^{2p}]\big]^4.
	\end{align}
	For $0< s <t $, we have
	\begin{align*}
		X_n(t) &= X_n(t) - X_n(s) + X_n(s)
		= \widetilde{X}_n(t-s) + X_n(s),
	\end{align*}
	where $\widetilde{X}_n(t-s) := T_{n \times p}(t) T'_{n \times p}(t)- T_{n \times p}(s) T'_{n \times p}(s)$. 
%	\begin{equation*}
%		(X_n(t))^{k}  = [X'_n(t-s)+ X_n(s)]^{k}.
%	\end{equation*} 
	Now using the binomial expansion, we get 
	\begin{align} \label{eqn:C_n^p(t)- C_n^p(s)}
		(X_n(t))^{k} - (X_n(s))^{k} & = (\widetilde{X}_n(t-s))^{k} + \sum_{d=1}^{k-1} \binom{k}{d} (\widetilde{X}_n(t-s))^d (X_n(s))^{k-d}, \nonumber 
	\end{align} 
	and hence
	\begin{equation} \label{eqn:trace t-s}
		\Tr (X_n(t))^{k} - \Tr (X_n(s))^{k}  = \Tr[ (\widetilde{X}_n(t-s) )^{k}] + \sum_{d=1}^{k-1} \binom{k}{d} \Tr [(\widetilde{X}_n(t-s))^d (X_n(s))^{k-d}].
	\end{equation}
	Now, we calculate each term of the above expression. First, using the trace formula (\ref{def:trace_Tn_A}), we get 
	\begin{align} \label{eqn:tracemultply}
		&\Tr[ (\widetilde{X}_n(t-s) )^{k}] \nonumber \\ & = \frac{1}{n^{k} } \sum_{i=1}^n \sum_{A_{k}} \prod_{r=1}^{k} \big\{b_{j_{2r-1}}(t) b_{j_{2r}}(t) -b_{j_{2r-1}}(s) b_{j_{2r}}(s) \big\} I(i, J_p,p,n) \nonumber \\
		 & =  \frac{1}{n^{k} } \sum_{i=1}^n \sum_{A_{k}} \prod_{r=1}^{k} \big\{ b_{j_{2r-1}}(t) b_{j_{2r}}(t) - b_{j_{2r-1}}(t) b_{j_{2r}}(s) + b_{j_{2r-1}}(t) b_{j_{2r}}(s) -b_{j_{2r-1}}(s) b_{j_{2r}}(s) \big\} I(i, J_k,p,n)   \nonumber \\
		 & =  \frac{1}{n^{k} } \sum_{i=1}^n \sum_{A_{k}} \prod_{r=1}^{k} \big[ b_{j_{2r-1}}(t) \big\{b_{j_{2r}}(t) - b_{j_{2r}}(s) \big\} + \big\{b_{j_{2r-1}}(t) - b_{j_{2r-1}}(s) \big\} b_{j_{2r}}(s) \big] I(i, J_k,p,n)   \nonumber \\
		  & \distas{D}  \frac{1}{n^{k} } \sum_{i=1}^n \sum_{A_{k}} \prod_{r=1}^{k} \big[  \sqrt{t} y_{j_{2r-1}}  \sqrt{t-s} x_{j_{2r}} + \sqrt{t-s}x_{j_{2r-1}} \sqrt{s} y_{j_{2r}}\big] I(i, J_k,p,n)  \nonumber \\
		  & = \frac{(t-s)^\frac{k}{2}}{n^{k} } \sum_{i=1}^n \sum_{A_{k}} \prod_{r=1}^{k} \big\{ \sqrt{t} y_{j_{2r-1}} x_{j_{2r}} + \sqrt{s} x_{j_{2r-1}}  y_{j_{2r}} \big\} I(i, J_k,p,n) ,
	\end{align}
	where the notion $x_1 \distas{D} x_2$ denotes that  $x_1$ and $x_2$ have same distribution, and for any $j \in \N$ and $t,s \in (0,T)$, 
	$$ \text{$b_j(t-s) \distas{D} (\sqrt{t-s})x_j$, $b_j(s) \distas{D} \sqrt{s} y_j$ and $b_j(t) \distas{D} \sqrt{t} y_j$}.$$
	Since $\{b_j(t); t \geq 0\}$ is a Brownian motion, $\{x_{j}\}$ and $\{y_{j}\}$ are independent normal random variables with mean zero, variance 1 and other moments finite.

	Similarly, using the trace formula (\ref{def:trace_Tn_A}), we get
		\begin{align} \label{eqn:tracemultply(t-s)s}
		& \Tr [(\widetilde{X}_n(t-s))^d (X_n(s))^{k-d}] \nonumber \\
		 & = \frac{1}{ n^k } \sum_{i=1}^n \sum_{A_{k}}   \prod_{r=1}^{k-d} (b_{j_{2r-1}}(s) b_{j_{2r}}(s) \prod_{r=(k-d)+1}^{k} \big\{b_{j_{2r-1}}(t) b_{j_{2r}}(t) -b_{j_{2r-1}}(s) b_{j_{2r}}(s) \big\}  I(i, J_k,p,n)  \nonumber \\
		& \distas{D} \frac{(t-s)^\frac{d}{2}}{n^{k} } \sum_{i=1}^n \sum_{A_{k}} \prod_{r=1}^{k-d} (s)^{\frac{k-d}{2}} y_{j_{2r-1}} y_{j_{2r}}  \prod_{r=(k-d)+1}^{k} \big\{ \sqrt{t} y_{j_{2r-1}} x_{j_{2r}} + \sqrt{s} x_{j_{2r-1}}  y_{j_{2r}} \big\}   I(i, J_k,p,n) .
	\end{align}
	
	Now using (\ref{eqn:tracemultply}) and (\ref{eqn:tracemultply(t-s)s}) in (\ref{eqn:trace t-s}), we get 
	\begin{align} \label{eqn:Z_p}
		&\Tr (X_n(t))^{k} - \Tr (X_n(s))^{k} \nonumber \\
		& \distas{D}  \frac{1}{n^{k} } \sum_{i=1}^n \sum_{A_{k}} \Big( (t-s)^\frac{k}{2} \prod_{r=1}^{k} \big\{ \sqrt{t} y_{j_{2r-1}} x_{j_{2r}} + \sqrt{s} x_{j_{2r-1}}  y_{j_{2r}} \big\} \nonumber \\
		& \quad + \sum_{d=1}^{k-1} (t-s)^\frac{d}{2} \prod_{r=1}^{k-d} (s)^{\frac{k-d}{2}} y_{j_{2r-1}} y_{j_{2r}}  \prod_{r=(k-d)+1}^{k} \big\{ \sqrt{t} y_{j_{2r-1}} x_{j_{2r}} + \sqrt{s} x_{j_{2r-1}}  y_{j_{2r}} \big\} \Big)  I(i, J_k,p,n)  \nonumber \\
		& =  \frac{\sqrt{t-s}}{n^{k} } \sum_{i=1}^n \sum_{A_{k}} \Big( (t-s)^\frac{k-1}{2} \prod_{r=1}^{k} \big\{ \sqrt{t} y_{j_{2r-1}} x_{j_{2r}} + \sqrt{s} x_{j_{2r-1}}  y_{j_{2r}} \big\} \nonumber \\
		& \quad + \sum_{d=1}^{k-1}(t-s)^\frac{d-1}{2} \prod_{r=1}^{k-d} (s)^{\frac{k-d}{2}} y_{j_{2r-1}} y_{j_{2r}}  \prod_{r=(k-d)+1}^{k} \big\{ \sqrt{t} y_{j_{2r-1}} x_{j_{2r}} + \sqrt{s} x_{j_{2r-1}}  y_{j_{2r}} \big\} \Big)  I(i, J_k,p,n)  \nonumber \\
		& = \frac{\sqrt{t-s}}{n^{k} } \sum_{i=1}^n \sum_{A_{k}} Z_{J_{k}} I(i, J_k,p,n), \mbox{ say}.
	\end{align} 
	Finally,  using (\ref{eqn:Z_p}) in (\ref{eqn:w_p(t-s)}), we get
	\begin{align*} % \label{eqn:w_p(t-s)2}
		w_{k}(t) -w_{k}(s) \distas{D} \frac{\sqrt{t-s}}{n^{k+ \frac{1}{2}} } \sum_{i=1}^n \sum_{A_{k}} (Z_{J_{k}} - \E[Z_{J_{k}}]) I(i, J_k,p,n),
	\end{align*}
	and hence 
	\begin{align} \label{eqn:w_p^4}
		\E[w_{k}(t) -w_{k}(s)]^4 & = \frac{(t-s)^2}{n^{4k+2}} \sum_{i_1, i_2, i_3, i_4=1}^n  \sum_{A_{k}, A_{k}, A_{k}, A_{k} } \E \Big[ \prod_{m=1}^4 (Z_{J^m_{k}} - \E Z_{J^m_{k}})  I(i_m, J^m_k,p,n)\Big], 
	\end{align}
	where $J^m_{k}$ are vectors from $A_{k}$, for each $m=1, 2, 3, 4$. Depending on connectedness between $J^m_{k}$'s, 
	%we divide the right side of \eqref{eqn:w_p^4} into three parts, $I_1,I_2, I_3$. 
	the following three cases arise:
	\vskip5pt
	\noindent \textbf{Case I.} \textbf{At least one of $\{J^m_{k}: m=1,2,3,4\}$, is not connected with the remaining ones:}  In this case, due to independence of entries, we get
	$$\E \Big[ \prod_{m=1}^4 (Z_{J^m_{k}} - \E Z_{J^m_{k}}) \Big] = 0.$$
	Hence in this case, contribution to $\E[w_{p}(t) -w_{p}(s)]^4$ is zero.
	%we always get $\E[w_{p}(t) -w_{p}(s)]^4 = 0$ which shows that (\ref{eq:tight4}) is true. \\\
	\vskip5pt
	\noindent \textbf{Case II.} \textbf{$J^1_{k}$ is connected with only one of $J^2_{k}, J^3_{k}, J^4_{k}$ and the remaining two of $J^2_{k}, J^3_{k}, J^4_{k}$  are connected only among themselves:} Without loss of generality, we assume $J^1_{k}$ is connected with $J^2_{k}$ and $J^3_{k}$ is connected with $J^4_{k}$. 
	%that is, $(S_{J^1_{p}} \cup S_{J^2_{p}}) \cap (S_{J^3_{p}} \cup S_{j^4_{p}}) = \emptyset$. 
	Under this situation, the terms in the right hand side of (\ref{eqn:w_p^4}) can be written as
	\begin{align} \label{eqn:case II} 
		&\frac{(t-s)^2}{n^{4k+2}} \hskip-5pt \sum_{i_1, i_2, i_3, i_4=1}^n  \sum_{A_{k}, A_{k}} \hskip-4pt \E \Big[ \prod_{m=1}^2 (Z_{J^m_{k}} - \E Z_{J^m_{k}}) I(i_m, J^m_k,p,n) \Big]  \sum_{A_{k}, A_{k}} \hskip-4pt \E \Big[ \prod_{m=3}^4 (Z_{J^m_{k}} - \E Z_{J^m_{k}})  I(i_m, J^m_k,p,n)\Big]  \nonumber \\
		&= T_2, \mbox{ say}.
	\end{align}
	First note that
	\begin{align} \label{eqn:B_p_2}
	\sum_{A_{k}, A_{k}} \E \Big[ \prod_{m=1}^2 (Z_{J^m_{k}} - \E Z_{J^m_{k}}) \Big] = \sum_{( J^1_{k}, J^2_{k}) \in B_{K_2}} \E \Big[ \prod_{m=1}^2 (Z_{J^m_{k}} - \E Z_{J^m_{k}}) \Big],
	\end{align}
	where $B_{K_2}$ as in Lemma \ref{lem:B_{P_l}}.
	
	Since $x_i, y_i$ are normal random variables and $t, s \in [0,T]$,  there exists $\alpha>0$ such that  for all $( J^1_{k}, J^2_{k}) \in B_{K_2}$,
	$$\big| \E \Big[ \prod_{m=1}^2 (Z_{J^m_{k}} - \E Z_{J^m_{k}}) \Big] \prod_{m=1}^2 I(i_m, J^m_k,p,n) \big| \leq \alpha. $$
	Now using the above bound, from (\ref{eqn:B_p_2}) we get
	\begin{align*} % \label{eqn:case2, B_2}
		\sum_{ A_{k}, A_{k}} \Big|  \E \Big[ \prod_{m=1}^2 (Z_{J^m_{k}} - \E Z_{J^m_{k}}) \Big] \prod_{m=1}^2 I(i_m, J^m_k,p,n)  \Big| \leq \sum_{( J^1_{k}, J^2_{k}) \in B_{K_2}} \alpha = |B_{K_2}| \times \alpha.
	\end{align*} 
Note from  Lemma \ref{lem:B_{P_l}} that $|B_{K_2}|= O({n}^{2k-1})$ and hence from the above expression and (\ref{eqn:case II}),
	\begin{align*}
		%\E[w_{p}(t) -w_{p}(s)]^4 
		|T_2| \leq \frac{(t-s)^2}{ n^{4}} \sum_{i_1, i_2, i_3, i_4=1}^n  \alpha^2 O(1) = (t-s)^2 \alpha^2 O(1).
	\end{align*}
%	Since $t, s \in [0,T]$,  there exist $M_1 >0$, depending only on $p, T$ such that 
%	\begin{align} \label{case II,(t-s)}
%		%\E[w_{p}(t) -w_{p}(s)]^4 
%		|T_2|\leq M_1 (t-s)^2 \ \mbox{ for all }n\geq 1.
%	\end{align} 
\vskip3pt
	\noindent \textbf{Case III.} \textbf{$\{J^1_{k}, J^2_{k}, J^3_{k}, J^4_{k}\}$ forms a cluster:}  
	Since $\E(x_{i})=0$ and $\E(y_{i})=0$, in this case we have 
	\begin{align} \label{eqn:B_p_2,2}
		 \sum_{A_{k}, A_{k}, A_{k}, A_{k}} \E \Big[ \prod_{m=1}^4 (Z_{J^m_{k}} - \E Z_{J^m_{k}}) \Big]
		= \sum_{( J^1_{k}, J^2_{k}, J^3_{k}, J^4_{k}) \in B_{K_4}}  \E \Big[ \prod_{m=1}^4 (Z_{J^m_{k}} - \E Z_{J^m_{k}}) \Big],
	\end{align}
	where $B_{K_4}$ as in Lemma \ref{lem:B_{P_l}} for $\ell=4$ and $k_i=k$, $i=1,2,3,4$. Again by the similar arguments as given  in Case II, there exists $\beta>0$ such that for each $( J^1_{k}, J^2_{k}, J^3_{k}, J^4_{k}) \in B_{K_4}$,
	$$\Big| \E \Big[ \prod_{m=1}^4 (Z_{J^m_{k}} - \E Z_{J^m_{k}}) \Big] I(i_m, J^m_k,p,n) \Big| \leq \beta. $$
	Using the above inequality, from (\ref{eqn:B_p_2,2}) we get
	\begin{align*}
		\sum_{ A_{k}, A_{k}, A_{k}, A_{k}} \Big| \E \Big[ \prod_{m=1}^4 (Z_{J^m_{k}} - \E Z_{J^m_{k}}) \Big] I(i_m, J^m_k,p,n) \Big| & \leq \sum_{( J^1_{k}, J^2_{k}, J^3_{k}, J^4_{k}) \in B_{K_4}} \beta
		=|B_{K_4}| \times \beta.
	\end{align*} 
	Recall from Lemma \ref{lem:B_{P_l}} that $|B_{K_4}| = o({(n+p)}^{4k-2})$. For $p,n \rightarrow \infty$ and $p/n \tends \lambda>0$, we get
	\begin{align*}% \label{eqn:case III,Z}
		 \frac{(t-s)^2}{n^{4k+2}} \sum_{i_1, i_2, i_3, i_4=1}^n  \sum_{A_{k}, A_{k}, A_{k}, A_{k} } \Big|  \E \Big[ \prod_{m=1}^4 (Z_{J^m_{k}} - \E Z_{J^m_{k}}) \Big] I(i_m, J^m_k,p,n) \Big| = (t-s)^2  o(1).
	\end{align*}

	Combining all three cases, we get that there exists a positive constant $M_T$, depending only on $p, T$ such that
	\begin{align*} % \label{eqn:M_T}
		%\E[w_p(t) - w_p(s)]^4 & \leq M^T_7 (t-s)^2 + M^T_{10} (t-s)^2 \nonumber \\
		\E[w_{k}(t) -w_{k}(s)]^4 & \leq M_T (t-s)^2 \ \ \ \forall \ n \in \mathbb{N} \ \mbox{and } t,s \in[0,\ T].
	\end{align*}
	This completes the proof of Proposition \ref{pro:tight} with $\alpha = 4$ and $\beta = 1$.
\end{proof}

\subsection{\large \bf Fluctuation of $T_{n \times p}(t)$  with non-zero diagonal entries}  \label{sec:thm_TT'(t)_nonzero} 
	In  Theorem \ref{thm:LES_Tp}, we considered $b_0(t)\equiv 0$. Now we discuss the fluctuation behaviour for a non-zero continuous stochastic process $b_0(t)$. First, define the following notion:
	
	Let $\{b_0(t);t\ge 0\}$ be a non-zero continuous stochastic process  which is independent of the Brownian motion sequence $\{b_m(t);t\geq 0, m \geq 1\}$. For $k \geq 1$ we define,  
	\begin{align*} %\label{eqn:w_p(t)_tilde}
		\tilde{T}_{n \times p}(t) & := T_{n \times p}(t) + \frac{b_0(t)}{\sqrt{n}} I_{n \times p}, \nonumber \\
		\tilde{w}_{k}(t) & := \frac{1}{\sqrt{n}}\bigl[ \Tr\big\{ \tilde{T}_{n \times p}(t) \tilde{T}'_{n \times p}(t)\big\}^{k} - \E[\Tr\big\{ \tilde{T}_{n \times p}(t) \tilde{T}'_{n \times p}(t)\big\}^{k}]\bigr],
	\end{align*}
where $T_{n \times p}(t)$ is the Toeplitz matrix whose entries are $\{\frac{b_m(t)}{\sqrt{n}}\}_{m \geq 0}$  with zero diagonal entries ($b_0(t) \equiv 0$). Then for every  $k\geq 1$, as $ n,p \tends \infty$ with $p/n \tends \lambda>0$, we have the following results:
	
\begin{theorem}\label{thm:LES_Tp_b0}
		\noindent (i) For $k \geq 1, r \geq 1$ and $0<t_1 <t_2 <\cdots <t_r$, $n,p \rightarrow \infty$ and $p/n \rightarrow \lambda>0$, we have
	\begin{equation*}
		(\tilde{w}_{k}(t_1),  \ldots , \tilde{w}_{k}(t_r)) \stackrel{d}{\rightarrow} (\tilde{N}_{k}(t_1),  \ldots ,\tilde{N}_k(t_r)),
	\end{equation*}
	where 
	\begin{equation} \label{eqn:tilde_N_p(t)}
		\tilde{N}_{k}(t) = N_k(t) + k t^{k/2} M_{k-1} b_0(t),
	\end{equation}
	with $\{N_k(t);  k \geq 1\}$  as in Theorem \ref{thm:LES_Tp} and  $M_k = \lim\limits_{n\to\infty} \frac{1}{n}\E \big[\Tr\big(\tilde{T}_{n \times p}(1)\tilde{T}'_{n \times p}(1)\big)^{k} \big]$, as given in Theorem \ref{thm: Moment TT^t}.
	Note that $\tilde{N}_{k}(t)$ will be Gaussian only when $b_0(t)$ is a Gaussian and independent of $\{b_m(t)\}_{m \geq 1}$.
	\vskip4pt
	\noindent (ii) The process $\{\tilde{w}_{k}(t);t \geq 0\}$ will be tight only when $b_0(t)$ is a Brownian motion and independent of $\{b_m(t)\}_{m \geq 1}$.
\end{theorem}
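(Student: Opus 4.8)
The plan is to reduce everything to the diagonal-free results already established (Theorem \ref{thm:LES_Tp}, Lemma \ref{lem:covarTT'(t)}, Proposition \ref{pro:finite convergence} and Proposition \ref{pro:tight}) by writing $\tilde T_{n\times p}(t)=T_{n\times p}(t)+\frac{b_0(t)}{\sqrt n}I_{n\times p}$ and expanding $\tilde X(t):=\tilde T_{n\times p}(t)\tilde T'_{n\times p}(t)$ around $X(t):=T_{n\times p}(t)T'_{n\times p}(t)$. Since $b_0$ is independent of $\{b_m\}_{m\ge1}$, the two summands of the claimed limit $\tilde N_k(t)=N_k(t)+kt^{k/2}M_{k-1}b_0(t)$ should emerge as independent contributions: $N_k(t)$ from the bulk statistic $w_k(t)$ and $kt^{k/2}M_{k-1}b_0(t)$ from the diagonal perturbation. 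I would prove (i) by the method of moments (as in Proposition \ref{pro:finite convergence}) and (ii) by verifying the increment condition of Proposition \ref{pro:tight}.

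For part (i), expanding gives $\tilde X(t)=X(t)+\frac{b_0(t)}{\sqrt n}\bigl(T_{n\times p}(t)I'+IT'_{n\times p}(t)\bigr)+\frac{b_0(t)^2}{n}II'$, so that by cyclicity of the trace $\tilde w_k(t)=w_k(t)+D_k(t)+o_P(1)$, where $D_k(t)$ collects the perturbation terms. The main step is to show $D_k(t)\convp kt^{k/2}M_{k-1}b_0(t)$. Here one conditions on $b_0$ and applies the trace formula of Lemma \ref{lem:trac_TT^t} to the Toeplitz matrix with nonzero symbol $\tilde a_0=b_0(t)$: grouping the summands by the number of indices $j_u$ that are forced to $0$, the surviving leading contribution factors as $b_0(t)$ times a normalized trace of $X(t)^{k-1}$ type that self-averages to its mean $t^{k-1}M_{k-1}$, the remaining Brownian scaling producing the power $t^{k/2}$; all terms of higher order in $b_0$ are $O_P(1/\sqrt n)$ and vanish. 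Because $b_0$ is independent of $\{b_m\}_{m\ge1}$, $D_k(t)$ is asymptotically independent of $w_k(t)$; combining this with Lemma \ref{lem:covarTT'(t)} and the Cram\'er--Wold/Wick argument of Proposition \ref{pro:finite convergence} (carried out jointly in $t_1,\dots,t_r$) yields $(\tilde w_k(t_1),\dots,\tilde w_k(t_r))\convd(\tilde N_k(t_1),\dots,\tilde N_k(t_r))$. Since $M_{k-1}>0$ by Remark \ref{rem: Mr >0} and $N_k(t)$ is Gaussian and independent of $b_0$, the limit $\tilde N_k(t)$ is Gaussian precisely when $b_0(t)$ is Gaussian.

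For part (ii), condition \eqref{eqn:tight3} is immediate since $\tilde w_k(0)=0$. For the increment bound \eqref{eq:tight4} I would write $\tilde w_k(t)-\tilde w_k(s)=\bigl(w_k(t)-w_k(s)\bigr)+\bigl(D_k(t)-D_k(s)\bigr)$. The first bracket already satisfies $\E|w_k(t)-w_k(s)|^4\le M_T|t-s|^2$ by Proposition \ref{pro:tight}, so tightness of $\{\tilde w_k(t)\}$ is equivalent to tightness of the correction process $\{D_k(t)\}$, which by part (i) behaves like $kM_{k-1}t^{k/2}b_0(t)$ with $M_{k-1}\ne0$; hence it reduces to the tightness of $\{t^{k/2}b_0(t)\}$. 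If $b_0$ is a Brownian motion independent of $\{b_m\}_{m\ge1}$, then the increments of $t^{k/2}b_0(t)$ admit a fourth-moment bound of order $|t-s|^{2}$ and a Kolmogorov-type estimate delivers \eqref{eq:tight4}, so $\{\tilde w_k(t)\}$ is tight. Conversely, if $b_0$ fails to be Brownian, then either its increments are non-Gaussian, so by part (i) the finite-dimensional limits $\tilde N_k$ are non-Gaussian and cannot be those of the claimed Gaussian process, or the increment moments of $t^{k/2}b_0(t)$ do not admit the growth $|t-s|^{1+\beta}$; in either case the required tightness fails, which is why it holds only when $b_0$ is Brownian.

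The hard part will be Step (i)'s identification of $D_k(t)$: isolating, inside $\Tr\tilde X(t)^k$, the genuinely self-averaging factor $\frac1n\Tr X(t)^{k-1}\to t^{k-1}M_{k-1}$ multiplying the single surviving $b_0(t)$ from the fluctuating remainder, and verifying that every other configuration is negligible. This is the $XX'$ analogue of the commuting expansion available for square Toeplitz matrices, and it requires the combinatorial bookkeeping of Lemma \ref{lem:trac_TT^t} together with the cluster estimates of Lemma \ref{lem:B_{P_l}} and Lemma \ref{lem:maincluster_TT'}. The second delicate point is the necessity direction of (ii), namely ruling out every non-Brownian $b_0$ by matching both the Gaussianity and the exact quadratic increment growth forced by \eqref{eq:tight4}.
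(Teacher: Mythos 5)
Your part (i) hinges on the identification $D_k(t)\stackrel{P}{\rightarrow}k\,t^{k/2}M_{k-1}b_0(t)$, and this step is not merely the hard part --- it fails, and the mechanism you describe cannot produce it. Write $I=I_{n\times p}$ and $X=T_{n\times p}(t)T'_{n\times p}(t)$, so that $\tilde X(t)=X+\frac{b_0(t)}{\sqrt n}\,(TI'+IT')+\frac{b_0(t)^2}{n}\,II'$. By cyclicity, the part of $\Tr\tilde X(t)^k$ that is linear in $b_0(t)$ equals $\frac{k\,b_0(t)}{\sqrt n}\Tr\bigl[X^{k-1}(TI'+IT')\bigr]$, and since its expectation vanishes (see below), its contribution to $\tilde w_k(t)$ is exactly $\frac{k\,b_0(t)}{n}\Tr\bigl[X^{k-1}(TI'+IT')\bigr]$. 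Your claim therefore amounts to saying that $\frac1n\Tr[X^{k-1}(TI'+IT')]$ self-averages to the nonzero constant $t^{k/2}M_{k-1}$. But this is \emph{not} the quantity $\frac1n\Tr X^{k-1}$, whose mean indeed tends to $t^{k-1}M_{k-1}$: because $I_{n\times p}$ does not commute with $T$, the matrix multiplying $b_0$ is $TI'+IT'$, and every surviving monomial of $\Tr[X^{k-1}(TI'+IT')]$ is a product of an odd number, $2k-1$, of the centered jointly Gaussian off-diagonal entries $b_m(t)$, $m\geq 1$ (the factors $I,I'$ carry no randomness and the diagonal of $T$ is zero). Hence $\E\,\Tr[X^{k-1}(TI'+IT')]=0$ identically, and a pairing count of the type used in Theorem \ref{thm:covarTT'} and Lemma \ref{lem:B_{P_l}} shows its variance is $O(n)$ (for $k=2$ this is an elementary direct computation), so $\frac1n\Tr[X^{k-1}(TI'+IT')]\rightarrow 0$ in $L^2$ and the linear terms contribute only $O_P(n^{-1/2})$. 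The terms carrying $b_0^j$ with $j\geq2$ come with prefactor $n^{-j/2}$ and traces having $O(n)$ means and $O_P(\sqrt n)$ fluctuations, so after centering and dividing by $\sqrt n$ they are also $O_P(n^{-1/2})$. Thus $D_k(t)\stackrel{P}{\rightarrow}0$, not $kt^{k/2}M_{k-1}b_0(t)$. The case $k=1$ exhibits the collapse in one line: $\Tr[TI']=\Tr[IT']=0$ since $T$ has zero diagonal, so $\Tr\tilde X(t)=\Tr X(t)+\frac{\min(n,p)}{n}b_0(t)^2$ exactly, whence $\tilde w_1(t)\stackrel{d}{\rightarrow}N_1(t)$ with no summand $t^{1/2}M_0\,b_0(t)$ (and $M_0=1$).

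The root cause is that you are transplanting a genuinely square-matrix phenomenon: for square Toeplitz matrices the perturbation $\frac{b_0}{\sqrt n}I_n$ commutes, the linear term is $k\frac{b_0}{\sqrt n}\Tr T_n^{k-1}$, and the extra summand comes from $\E\Tr T_n^{k-1}\sim n\,m_{k-1}\neq0$. That is the content of Theorem 4 of \cite{m&s_toeplitz_2020}, and it is in fact all the paper itself offers as a proof of the present theorem (a pointer to that result and to the proof of Theorem \ref{thm:LES_Tp}); in the rectangular $XX'$ setting the corresponding expectation is identically zero, so the transfer breaks down. This also explains the dimensional inconsistency you passed over: a factor that self-averages to $t^{k-1}M_{k-1}$ would produce the prefactor $t^{k-1}$ (or $t^{k-1/2}$ after Brownian rescaling of the static Theorem \ref{thm:LES_TT'_a0}), never the $t^{k/2}$ appearing in \eqref{eqn:tilde_N_p(t)}. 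So the gap cannot be closed along your lines: either the $b_0$-summand arises from some mechanism entirely absent from your (and the paper's) sketch, or the limit is simply $N_k(t)$ and the constant in \eqref{eqn:tilde_N_p(t)} itself needs revisiting. Finally, your necessity argument in part (ii) is also not a proof: Proposition \ref{pro:tight} is only a \emph{sufficient} condition for tightness, so exhibiting a non-Brownian $b_0$ for which its hypothesis fails establishes nothing, and non-Gaussian finite-dimensional limits contradict convergence to the specific Gaussian process $\tilde N_k$, not tightness itself.
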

For the proof of Theorem \ref{thm:LES_Tp_b0}, we refer the reader to the proof of Theorem \ref{thm:LES_Tp}. One can also see the proof of Theorem 4 of \cite{m&s_toeplitz_2020}.
	 Observe that in Theorem \ref{thm:LES_Tp_b0} (ii), if $b_0(t)$ is not a Brownian motion, then we do not have the tightness of  $\{\tilde{w}_{k}(t);t \geq 0\}$. For details, see the proof of Theorem 4 of \cite{m&s_toeplitz_2020}. Hence  we have the process convergence of $\{\tilde{w}_{k}(t);t \geq 0\}$, only when $b_0(t)$ is a Brownian motion which is independent of $\{b_m(t)\}_{m \geq 1}$, and in this case the limit is $\{\tilde{N}_{k}(t);  t \geq 0\}$. %, as in (\ref{eqn:tilde_N_p(t)}).

\section{\large \bf \textbf{Non-symmetric Toeplitz matrices}} \label{sec:non-sym_Tnp} 

In this section we consider non-symmetric Toeplitz matrices, and study their linear eigenvalue statistic.
\subsection{\large \bf Limiting moment sequence:}
The following theorem provides the limiting sequence of  $\E\left[\frac{1}{n}\Tr(T_{n\times p} T'_{n \times p})^r\right]$ for non-symmetric Toeplitz matrix $T_{n \times p}$.
\begin{theorem}\label{thm: Moment TT^t_non-sym}
	Let $\lambda \in (0,\infty)$ and $X_n=T_{n\times p} T'_{n \times p}$, where $T_{n\times p}$ is the $n\times p$ non-symmetric Toeplitz matrix with input sequence $\{\frac{a_i}{\sqrt{n}}\}_{i \in \Z}$, $\{ a_i\}_{ |i| \geq 1 }$ obeying Assumption I and $a_0 \equiv 0$. Then for $r \geq 1$, $\E\left[\frac{1}{n}\Tr X_n^{r}\right]=M'_{r}+o(1/\sqrt{n})$  as $n,p \rightarrow \infty$, $p/n \rightarrow \lambda$, where
	\begin{align} \label{eqn:2r-moment_non-symT}
		M'_{r}=\sum_{\pi \in \mathcal{DP}_2(2 r)} \int_{[0,1] \times [-\lambda,1]^{r}} \prod_{s=1}^{r} \chi_{[0,\lambda]}\left(x_0- \sum_{\ell=i}^{2s-1} \epsilon_\pi(i) x_{\pi(i)}\right)\chi_{[0,1]}\left( x_0- \sum_{i=1}^{2s} \epsilon_\pi(i) x_{\pi(i)}\right)  \prod_{l=0}^{r} \mathrm{~d} x_l,
	\end{align}
	with $\mathcal{DP}_2(2 r)$ is as in Definition \ref{def:SP_2} and $\epsilon_{\pi}$ as given in (\ref{eqn: epsilon map}).
\end{theorem}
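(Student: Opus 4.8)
The plan is to mirror the proof of Theorem \ref{thm: Moment TT^t} step by step, isolating the single feature that distinguishes the non-symmetric case. Since the entries $\{a_i\}_{i\in\Z}$ are now mutually independent, two indices can be matched only when they are \emph{exactly} equal, $j_u=j_v$, rather than equal up to sign, $|j_u|=|j_v|$, as in the symmetric case. This is precisely what I expect to collapse the family of contributing partitions from $\mathcal{P}_2(2r)$ down to $\mathcal{DP}_2(2r)$, and thereby to replace $M_r$ by $M'_r$.

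First I would invoke Lemma \ref{lem:trac_TT^t} (with $T^{(1)}=\cdots=T^{(r)}=T_{n\times p}$) to write
\begin{equation*}
\E\Big[\tfrac{1}{n}\Tr X_n^{r}\Big] = \frac{1}{n^{r+1}} \sum_{i=1}^n \sum_{J} \E[a_J]\, I(i,J,p,n)\, \delta_0\Big(\sum_{q=1}^{2r}(-1)^q j_q\Big),
\end{equation*}
where $a_J=\prod_{u=1}^{2r} a_{j_u}$, the vector $J=(j_1,\dots,j_{2r})$ runs over $\{-(p-1),\dots,n-1\}^{2r}$, and $I(i,J,p,n)$ is as in (\ref{eqn:I(i,j,p,n)}). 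Because the $a_i$ with $i\neq 0$ are independent, centered, of unit variance, while $a_0\equiv 0$, the factor $\E[a_J]$ vanishes unless every value appearing in $J$ appears at least twice and no coordinate equals $0$. Grouping by the partition $\pi$ of $[2r]$ that records the blocks of equal indices and repeating the degree-of-freedom count from Theorem \ref{thm: Moment TT^t}, any $\pi$ carrying a block of size larger than two contributes $O(1/n)=o(1/\sqrt n)$; hence only pair-partitions $\pi\in\mathcal{P}_2(2r)$ can survive in the limit.

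The decisive step is the analysis of the delta constraint for a pair-partition under exact matching. Fixing $\pi\in\mathcal{P}_2(2r)$ and a compatible $J$ (so $j_u=j_v$ whenever $u\sim_\pi v$), one has
\begin{equation*}
\sum_{q=1}^{2r}(-1)^q j_q = \sum_{ \{u,v\} \in \pi } \big((-1)^u + (-1)^v\big)\, j_u .
\end{equation*}
A different-parity block contributes $0$, whereas a same-parity block contributes $\pm 2 j_u\neq 0$. Thus if $\pi$ possesses a same-parity block, the Dirac constraint imposes a genuine linear relation on the same-parity block variables, costing one degree of freedom and forcing the contribution down to $O(1/n)=o(1/\sqrt n)$ (and if there is a single same-parity block the relation sets that index to $0$, which is annihilated by $a_0\equiv 0$). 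Consequently only the different-parity pair-partitions $\pi\in\mathcal{DP}_2(2r)$ of Definition \ref{def:SP_2} contribute, and for each such $\pi$ the delta is automatically equal to $1$ and the sign map $\epsilon_\pi$ of (\ref{eqn: epsilon map}) is identically $1$.

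Finally, for $\pi\in\mathcal{DP}_2(2r)$ the $r$ free block-variables each range over $\{-(p-1),\dots,n-1\}$; substituting the explicit form of $I(i,J,p,n)$ and rescaling $i/n\to x_0\in[0,1]$ and each block variable by $1/n\to x_\ell\in[-\lambda,1]$, the inner sum becomes a Riemann sum. As $n,p\to\infty$ with $p/n\to\lambda$ it converges, with error $O(1/n)=o(1/\sqrt n)$, to the integral in (\ref{eqn:2r-moment_non-symT}); summing over $\mathcal{DP}_2(2r)$ yields $M'_r$. I expect the only real obstacle to be the combinatorial bookkeeping of the preceding paragraph---verifying rigorously that \emph{every} partition carrying a same-parity block is suppressed. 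This is exactly where the non-symmetric case genuinely diverges from Theorem \ref{thm: Moment TT^t}, in which same-parity blocks survived with integration range $[-m,m]$; here the exact (rather than up-to-sign) matching eliminates them entirely, and the remaining Riemann-sum limit is routine and identical to the symmetric argument.
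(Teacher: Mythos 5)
Your proposal is correct and follows essentially the same route as the paper's own (sketched) proof: reduce to pair-partitions via the standard degree-of-freedom count, then observe that under exact matching $j_u=j_v$ the constraint $\delta_0\bigl(\sum_q(-1)^q j_q\bigr)$ kills one degree of freedom whenever a same-parity block is present, leaving only $\mathcal{DP}_2(2r)$ and the Riemann-sum limit. Your block-by-block computation of $\sum_q(-1)^q j_q$ is just a cleaner packaging of the paper's argument, which isolates a single same-parity block and solves for its index.
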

\begin{proof}
	The proof of this theorem is similar to the proof Theorem \ref{thm: Moment TT^t} and here, we mention only the key steps. Note that similar calculations imply that 
\begin{equation}\label{eqn:E1/n tr X non-sym}
	\E\left[\frac{1}{n} \Tr\left(X^{ r}\right)\right]=\frac{1}{n^{r+1}} \sum_{i=1}^n \sum_{\pi \in \widehat{\mathcal{P}}(2r)}\sum_{J \in \Pi(\pi)}  \E\left[a_J\right] I(i,J,p,n) \delta_{0} (\sum_{q=1}^{2r}(-1)^{q} j_{q}),
\end{equation} 
where $\widehat{\mathcal{P}}(2r)$ is the set of all partitions of $[2r]$ such that each block has size greater than or equal to two, and $\Pi(\pi)$ is the set of all $J=(j_1,j_2,\ldots , j_{2r}) \in \{-(p-1),\ldots,-1,1,\ldots,(n-1)\}^{2r}$ such that $r \sim_{\pi} s$ if and only if $j_r=j_s$. By an argument similar to the one in proof of Theorem \ref{thm: Moment TT^t}, it follows that a non-zero contribution in limit occurs only for pair-partitions $\pi$. Suppose $\pi$ contains a same-parity block, say $\{u,v\}$ and let $J \in \Pi(\pi)$. Then $(-1)^uj_u=(-1)^vj_v$ and therefore once all $j_q$ except $j_u,j_v$ are chosen, (\ref{eqn:E1/n tr X non-sym}) is non-zero only if $(-1)^uj_u=-\frac{1}{2}  \sum_{q=1; q \neq r,s}^{2r}(-1)^qj_q$. Thus $j_u$ is determined by other $j_q$'s and therefore the number of choices of $J \in \Pi(\pi)$ such that (\ref{eqn:E1/n tr X non-sym}) is non-zero is of the order $O(n^{r-1})$. Thus, contribution of the order $O(1)$ occurs in the limit only when $\pi \in \mathcal{DP}_2(2 r)$ and the contribution of rest of the terms is the order $o(1/\sqrt{n})$. This completes the proof of the Theorem \ref{thm: Moment TT^t_non-sym}.
\end{proof}

\subsection{\large \bf Fluctuation for independent entries:}
The following theorem provides the fluctuation of linear eigenvalue statistics of $T_{n \times p} T'_{n \times p}$.
\begin{theorem}\label{thm:LES_TT'non-sym}
	Let $\lambda \in (0,\infty)$ and $T_{n \times p}$ be the $n \times p$ non-symmetric Toeplitz matrix  with input entries $\{\frac{a_i}{\sqrt{n}}\}_{i \in \Z}$, where $\{a_i\}_{|i|\geq 1}$ satisfies Assumption I and $a_0 \equiv 0$.
	Then as $ n,p \tends \infty$ with $p/n \tends \lambda$,
%	\begin{equation}
%		w_k \stackrel{d}{\rightarrow} N(0,\sigma_k^2),
%	\end{equation}
%where $w_k$ is as in (\ref{eqn:wp_Hn_odd}) for non-symmetric Toeplitz matrix  $T_{n\times p}$ and 	$\sigma_p^{2}$ is as in (\ref{eqn:cov_TT'non-sym}).
%	Moreover, for a given polynomial $Q(x)=\sum_{j=2}^{k}c_{j} x^{j}$ with degree $k\geq 2$, 
	\begin{equation*}
	 \frac{1}{\sqrt{n}} \big[ \Tr Q(T_{n \times p}T'_{n \times p}) - \E[\Tr Q(T_{n \times p}T'_{n \times p})] \big] \stackrel{d}{\rightarrow} N(0,\widehat{\sigma}_Q^2),
	\end{equation*}
where $Q(x)=\sum_{j=0}^{k}c_{j} x^{j}$ is a real polynomial with degree $k\geq 1$ and $\widehat{\sigma}_Q^2=\sum_{j_1,j_2=0}^{k}c_{j_1} c_{j_2} \widehat{\sigma}_{j_1,j_2}$ with  
	\begin{align}\label{eqn:cov_TT'non-sym}
		\widehat{\sigma}_{j_1,j_2}= \sum_{\pi \in \mathcal{DP}_2(2j_1,2j_2)}  f_I^-(\pi) + (\kappa-1) \sum_{\pi \in \mathcal{DP}_{2,4}(2j_1,2j_2)}  f_{II}^-(\pi),
	\end{align}
	where $\mathcal{DP}_2(2j_1,2j_2)$, $\mathcal{DP}_{2,4}(2j_1,2j_2)$ are as given in Definition \ref{def:SP_2}, $f_I^-(\pi)$ and $f_{II}^-(\pi)$ are the integrals given in (\ref{eqn:f_Ipi-_T}) and (\ref{eqn:f_II-pi_T}), respectively.
\end{theorem}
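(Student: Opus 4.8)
The plan is to run the method-of-moments argument used for the symmetric case in the proof of Theorem~\ref{thm:LES_TT'}, modifying the combinatorial input to reflect that the whole sequence $\{a_i\}_{i\in\Z}$ is now independent. Set $w_k := \frac{1}{\sqrt n}\bigl[\Tr(T_{n\times p}T'_{n\times p})^k - \E[\Tr(T_{n\times p}T'_{n\times p})^k]\bigr]$. Since $\Tr Q(T_{n\times p}T'_{n\times p})$ is a linear combination of the $\Tr(T_{n\times p}T'_{n\times p})^j$, linearity of the trace reduces the asserted polynomial CLT to the joint Gaussian convergence of $(w_{k_1},\dots,w_{k_\ell})$, after which $\widehat\sigma_Q^2 = \sum_{j_1,j_2=0}^{k} c_{j_1}c_{j_2}\,\widehat\sigma_{j_1,j_2}$ follows from the bilinearity of the covariance (the $j=0$ terms drop out, since $c_0\Tr I$ is deterministic). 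By the moment method together with Wick's formula, it is then enough to establish two things: (i) $\lim_{p/n\to\lambda}\Cov(w_{k_1},w_{k_2}) = \widehat\sigma_{k_1,k_2}$, the quantity in~(\ref{eqn:cov_TT'non-sym}); and (ii) the vanishing in the limit of every joint cumulant of $w_{k_1},\dots,w_{k_\ell}$ of order at least three.

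For step (i) I would begin, exactly as in the proof of Theorem~\ref{thm:covarTT'}, from the trace formula of Lemma~\ref{lem:trac_TT^t} and expand $\Cov(w_{k_1},w_{k_2})$ as a sum over index vectors $J=(J_1,J_2)$ grouped by the partition $\pi$ of $[2k_1+2k_2]$ they induce. The single decisive difference from the symmetric setting is that $a_i$ and $a_{-i}$ are now independent, so $\E[a_J]\neq 0$ forces each index to be matched \emph{exactly}, $j_u=j_v$, rather than merely in absolute value. Repeating the degree-of-freedom count from the proof of Theorem~\ref{thm: Moment TT^t_non-sym}, any same-parity matched block then makes the Dirac constraint $\delta_0(\sum(-1)^q j_q)$ remove one further degree of freedom and hence contributes $o(1)$; this collapses $\PP_2(2k_1,2k_2)$ to $\mathcal{DP}_2(2k_1,2k_2)$ and $\PP_{2,4}(2k_1,2k_2)$ to $\mathcal{DP}_{2,4}(2k_1,2k_2)$. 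Moreover, of the two sign configurations $(J_1,J_2)$ and $(J_1,-J_2)$ that both contributed in the symmetric proof (producing the pairs $f_I^{\pm}$ and $f_{II}^{\pm}$), exact matching is compatible with only one of them; tracking it through the Riemann-sum limit — the relevant sign being fixed by the $T'$ index structure, already reflected in the minus sign of the moment formula~(\ref{eqn:2r-moment_non-symT}) — leaves precisely $f_I^-(\pi)$ and $(\kappa-1)f_{II}^-(\pi)$, which is~(\ref{eqn:cov_TT'non-sym}).

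For step (ii) I would appeal to the cluster estimates Lemma~\ref{lem:B_{P_l}} and Lemma~\ref{lem:maincluster_TT'}. These are purely combinatorial bounds on the cardinalities $|B_{K_\ell}|$ of clusters and do not use any symmetry of the entries, so they apply verbatim here. As in the proof of Theorem~\ref{thm:LES_TT'}, the contribution of $(J_1,\dots,J_\ell)$ to $\E[w_{k_1}\cdots w_{k_\ell}]$ is $O(1)$ only when these vectors decompose into clusters of length exactly two; any cluster of length $\ge 3$ is $o(1)$ by Lemma~\ref{lem:maincluster_TT'}, and for odd $\ell$ at least one cluster has odd length, forcing the limit to be $0$. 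The surviving contributions correspond to the pair-partitions of $[\ell]$, and combining the resulting products of pairwise covariances with step (i) reproduces the Gaussian mixed moments via Wick's formula.

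The genuinely new work is confined to step (i): the cluster bounds and the Wick reduction transcribe directly from the symmetric proof, and the limiting moment sequence is already supplied by Theorem~\ref{thm: Moment TT^t_non-sym}. The crux is therefore the bookkeeping that shows the exact-matching condition simultaneously eliminates every same-parity block (restricting all partition classes to their different-parity analogues) and suppresses one member of each pair $f_I^{\pm}$, $f_{II}^{\pm}$, leaving only the $f_I^-$ and $f_{II}^-$ families. Once this reduction is verified, the remaining Riemann-sum limits are identical to those in Theorem~\ref{thm:covarTT'}, and the proof concludes.
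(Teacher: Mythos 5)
Your proposal is correct and takes essentially the same route as the paper: the paper in fact omits this proof entirely, remarking only that it is ``similar to the proof of Theorem \ref{thm:LES_TT'}'', and your write-up supplies exactly that intended adaptation. Specifically, you reuse the covariance analysis of Theorem \ref{thm:covarTT'} with the exact-matching restriction (the same mechanism the paper uses in the proof of Theorem \ref{thm: Moment TT^t_non-sym}) to collapse $\mathcal{P}_2$ and $\mathcal{P}_{2,4}$ to $\mathcal{DP}_2$ and $\mathcal{DP}_{2,4}$ and to retain a single sign family of integrals, and you then invoke the cluster estimates (Lemmas \ref{lem:B_{P_l}} and \ref{lem:maincluster_TT'}, which indeed carry over verbatim) together with Wick's formula to dispose of the higher moments, which is precisely the structure of the paper's symmetric-case argument.
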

 The proof of Theorem \ref{thm:LES_TT'non-sym} is similar to the proof of Theorem \ref{thm:LES_TT'}, we skip the proof here.
\subsection{\large \bf Fluctuation for time-dependent entries:}

\begin{theorem}\label{thm:LES_Tp(t)_non-sym} 
	Let $ k\geq 1$ and $\lambda \in (0,\infty)$. Suppose $w_k(t)$ is as in (\ref{eqn:w_p(t)_Tp}) for non-symmetric Toeplitz matrix  $T_{n\times p}(t)$ with the Brownian motion entries as non-diagonal and zero as diagonal entries. Then as $ n,p \tends \infty$ with $p/n \tends \lambda$,
	\begin{equation*}
		\{ w_{k}(t) ; t \geq 0\} \stackrel{\mathcal D}{\rightarrow} \{N_{k}(t) ; t \geq 0\},
	\end{equation*}
	where $\{N_{k}(t);t\geq 0, k\geq 1\}$ are zero mean Gaussian processes with covariance structure as: for $t_1 \leq t_2$
		\begin{align*}%\label{eqn:covTT'(t)non-sym}
	 &\Cov(N_{k_1}(t_1),N_{k_2}(t_2)) \nonumber \\
			& \ = \sum_{r=1}^{k_2} \binom{2k_2}{2r} t_1^{k_1+r} (t_2-t_1)^{k_2-r} \Big[ \sum_{\pi=(\pi_1,\pi_2) \in \mathcal{DP}_2(2k_1,2r) \atop \times \mathcal{DP}_2(2k_2-2r)}  f_I^-(\pi) + 2 \sum_{\pi=(\pi_1,\pi_2) \in \mathcal{DP}_{2,4}(2k_1,2r)\atop \times \mathcal{DP}_2(2k_2-2r)}  f_{II}^-(\pi) \Big],
		\end{align*}
%	where $M'_{2(k_2-r/2)}$ is as in (\ref{eqn:2r-moment_non-symT}) and
%\begin{align*}
%\theta'_1(k_1,r)& = \sum_{\pi \in \mathcal{SP}_2(2k_1,r)}  g_I(\pi) + 2 \sum_{\pi \in \mathcal{SP}_{2,4}(2k_1,r)}  g_{II}(\pi),
%\end{align*}
where $f_I^-(\pi)$ and $f_{II}^-(\pi)$ are as in (\ref{eqn:f_Ipi-_T}) and (\ref{eqn:f_II-pi_T}), respectively; 	$\mathcal{DP}_2(2k_1, 2r)$, $\mathcal{DP}_2(2k_2-2r)$ and $\mathcal{DP}_{2,4}(2k_1,2r)$ are as in Definition \ref{def:SP_2} with the notion $(\pi_1,\pi_2) \in  \mathcal{DP}_2(2k_1,2r) \times \mathcal{DP}_2(2k_2-2r)$ as in Definition \ref{remark:pi1,pi2}.
\end{theorem}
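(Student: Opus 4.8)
The plan is to establish process convergence exactly as in the proof of Theorem~\ref{thm:LES_Tp}, by verifying the two standard ingredients separately: finite-dimensional convergence of $\{w_k(t); t\ge 0\}$ and tightness of the process. The finite-dimensional statement I would handle by the Cram\'er--Wold device together with the method of moments and Wick's formula, while tightness I would obtain by checking the moment criterion of Proposition~\ref{pro:tight}. The only structural difference from the symmetric case is that the surviving partitions are now the different-parity partitions $\mathcal{DP}$ rather than the full families $\PP_2$ and $\PP_{2,4}$; this is precisely the reduction already carried out for the static non-symmetric problem in Theorem~\ref{thm: Moment TT^t_non-sym} and Theorem~\ref{thm:LES_TT'non-sym}.

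For the finite-dimensional convergence, the key computation is the limiting covariance, which I would derive by mimicking Lemma~\ref{lem:covarTT'(t)}. Writing $U := T_{n\times p}(t_1)$ and $V := T_{n\times p}(t_2)-T_{n\times p}(t_1)$, the independence of Brownian increments lets me decompose $T_{n\times p}(t_2) = U + V$ and expand $\Cov(w_{k_1}(t_1), w_{k_2}(t_2))$ via the trace formula of Lemma~\ref{lem:trac_TT^t}, grouping terms according to the number $2r$ of factors drawn from $U$ among the $2k_2$ entries of the second trace. This produces the binomial coefficient $\binom{2k_2}{2r}$ and the time weights $t_1^{k_1+r}(t_2-t_1)^{k_2-r}$, exactly as in the symmetric computation. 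The partition analysis of Theorem~\ref{thm:LES_TT'non-sym} then shows that a given partition contributes in the limit only if it is a different-parity pair-partition (producing $f_I^-(\pi)$) or a different-parity $(2,4)$-partition (producing $f_{II}^-(\pi)$); the positive-sign integrals $f_I^+,f_{II}^+$ do not arise in the non-symmetric setting, just as they are absent in \eqref{eqn:cov_TT'non-sym}. Since the entries are Gaussian, $\E a_i^4 = 3$, so the fourth-cumulant factor $(\kappa-1)$ of the static case specializes to $2$, which is the origin of the coefficient $2$ in front of the $f_{II}^-$ sum.

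With the covariance in hand, the higher mixed moments $\E[w_{k_1}(t_1)\cdots w_{k_\ell}(t_\ell)]$ are controlled exactly as in the proof of~\eqref{eqn:limE[w1+k]}: Lemma~\ref{lem:maincluster_TT'}, via the cluster cardinality bound of Lemma~\ref{lem:B_{P_l}}, forces every cluster of length $\ge 3$ to vanish in the limit, so only decompositions into clusters of length $2$ survive, and Wick's formula then identifies the limit with the moments of the Gaussian family carrying the covariance above. Tightness transfers essentially verbatim from Proposition~\ref{pro:tight}: that argument uses only the generic trace formula~\eqref{def:trace_Tn_A}, the Brownian scaling $b_j(t)\distas{D}\sqrt{t}\,y_j$, and the cluster estimates of Lemma~\ref{lem:B_{P_l}}, none of which depend on the symmetry of the Toeplitz structure. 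The same three-case analysis (disconnected, two disjoint pairs, single length-$4$ cluster) yields the bound $\E[w_k(t)-w_k(s)]^4 \le M_T (t-s)^2$, giving~\eqref{eq:tight4} with $\alpha=4$, $\beta=1$, while~\eqref{eqn:tight3} holds because $w_k(0)=0$.

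The main obstacle is the bookkeeping in the covariance step: one must check that conditions (a)--(c) from the proof of Lemma~\ref{lem:covarTT'(t)} — the common-element requirement $S_J \cap S_{J_1'}\neq\emptyset$ together with the multiplicity-$\ge 2$ conditions — remain compatible with the different-parity constraint, and in particular that $r$ is forced to be even so that the split $\pi=(\pi_1,\pi_2)$ lands in $\mathcal{DP}_2(2k_1,2r)\times\mathcal{DP}_2(2k_2-2r)$ or in $\mathcal{DP}_{2,4}(2k_1,2r)\times\mathcal{DP}_2(2k_2-2r)$. Everything else is a direct transcription of the symmetric arguments, with the same-parity blocks simply absent.
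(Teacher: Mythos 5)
Your proposal is correct and takes essentially the same approach the paper intends: the paper itself skips this proof, noting only that it is similar to that of Theorem \ref{thm:LES_Tp}, and your argument is exactly that adaptation --- finite-dimensional convergence via the covariance decomposition of Lemma \ref{lem:covarTT'(t)} (binomial grouping of $U$/$V$ factors giving $\binom{2k_2}{2r}$ and the time weights) combined with the non-symmetric partition reduction of Theorem \ref{thm:LES_TT'non-sym}, which restricts to $\mathcal{DP}$-partitions, kills the $f^+$ integrals, and specializes $\kappa-1$ to $2$ for Gaussian entries, plus tightness via Proposition \ref{pro:tight}. Nothing in your outline deviates from or falls short of the paper's intended argument.
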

The idea of the proof of Theorem \ref{thm:LES_Tp(t)_non-sym} is similar to the proof of Theorem \ref{thm:LES_Tp}, we skip the proof.

\begin{remark}
	In Theorem \ref{thm:LES_TT'non-sym} and Theorem \ref{thm:LES_Tp(t)_non-sym}, we have considered non-symmetric Toeplitz matrices with  zero diagonal entry. For the non-symmetric Toeplitz matrices with non-zero diagonal entries, one can conclude similar results as Theorem \ref{thm:LES_TT'_a0} and Theorem \ref{thm:LES_Tp_b0}.
\end{remark}

\section{\large \bf \textbf{Hermitian Toeplitz matrices}} \label{sec:her_Tnp} 

In this section, we consider Hermitian Toeplitz matrices. We study the fluctuations of linear eigenvalue statistics of $T_{n \times p}T_{n \times p}^*$, where $T_{n \times p}$ is a Hermitian Toeplitz matrix and $T_{n \times p}^*$ denotes the adjoint of $T_{n \times p}$.

Given an input	sequence $\{a_i\}_{i \geq 0}$, we construct a Hermitian Toeplitz matrix, by defining $a_{-i}=a_{i}$ for all $i \in \N$. We consider the following assumption on the input sequence.
\vskip2pt	
\noindent \textbf{Assumption II.} Let $\{a_{j}= x_j+ \mathrm{i} y_j; j  \geq 0\}$ be a sequence of complex random variables, where $\{(x_{j}, y_{j}); j \in \N \}$ is a sequence of independent real random variables with mean $0$. Further,  for all $j\in \N$, 
\begin{equation*}%\label{eqn: condition Hermitian} 
\displaystyle \E(x_{j}^2)=\E(y_{j}^2)=\frac{1}{2}, \ \E(|a_{j}|^4)=\kappa \text{ and }  \sup_{j} \{\E(|x_{j}|^k),  \E(|y_{j}|^k)\} \leq c_k < \infty, \ \text{ for all}\ \ k\geq 1.
\end{equation*}
 
 We first introduce the trace formula for $T^{(r)}_{n\times p} (T^{(r)}_{n \times p})^*$ when $T^{(r)}_{n\times p}$ is a Hermitian Toeplitz matrix.
\begin{lemma} \label{lem:trac_TT^H}
	Suppose  $T^{(r)}_{n\times p}$ are $n\times p$ Toeplitz matrices with complex input sequence $\{ a^{r}_i\}_{ i \in \mathbb{Z} }$ for $r=1,2, \ldots, k$. Let $X^{(r)}=T^{(r)}_{n\times p} (T^{(r)}_{n \times p})^*$. Then
	\begin{align*}% \label{def:trace_Tn_her}
		\Tr( X^{(k)} \cdots X^{(1)})
		= \sum_{i=1}^{n} \sum_{j_{1}, \ldots, j_{2k}=-(p-1)}^{n-1} \prod_{r=1}^{k} \bar{a}^{(r)}_{j_{2r-1}} a^{(r)}_{j_{2r}}  I(i,J_k,p,n)\delta_{0} (\sum_{q=1}^{2k}(-1)^{q} j_{q}),
		%= \sum_{i=1}^{n} \sum_{A_k} \prod_{r=1}^{2k} a_{j_{r}}   I_J(i,p) I_J(i,n),
	\end{align*}
	where $J=(j_1,j_2,\ldots,j_{2k})$ and $I(i,J,p,n)$ is as defined in (\ref{eqn:I(i,j,p,n)}).
\end{lemma}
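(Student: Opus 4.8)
The plan is to mirror the proof of Lemma \ref{lem:trac_TT^t} essentially verbatim, observing that $T^{(r)}_{n\times p}(T^{(r)}_{n\times p})^*$ differs from $T^{(r)}_{n\times p}(T^{(r)}_{n\times p})'$ only through the complex conjugation introduced by passing from the transpose to the adjoint. First I would record the action of a single Toeplitz matrix on the standard basis. Writing $T^{(1)}_{n\times p}$ in terms of the shift operators $F_{n\times p}(j)$ and $G_{n\times p}(j)$ exactly as in the proof of Lemma \ref{lem:trac_TT^t} gives
$$(T^{(1)}_{n\times p})e_i = \sum_{j=-(p-1)}^{n-1} a^{(1)}_j\, \chi_{[1,n]}(i+j)\, e_{i+j}.$$

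The only genuinely new ingredient is the action of the adjoint. Since $(T^{(1)}_{n\times p})^*$ has $(k,l)$-entry $\overline{a^{(1)}_{l-k}}$ (the adjoint both transposes and conjugates), the computation that produced $(T^{(1)}_{n\times p})'e_i$ in the transpose case now produces the same expression with a conjugated coefficient, namely
$$(T^{(1)}_{n\times p})^* e_i = \sum_{j=-(p-1)}^{n-1} \bar{a}^{(1)}_j\, \chi_{[1,p]}(i-j)\, e_{i-j}.$$
Composing these two actions gives the effect of one factor $X^{(1)} = T^{(1)}_{n\times p}(T^{(1)}_{n\times p})^*$,
$$X^{(1)} e_i = \sum_{j_1,j_2=-(p-1)}^{n-1} \bar{a}^{(1)}_{j_1} a^{(1)}_{j_2}\, \chi_{[1,p]}(i-j_1)\,\chi_{[1,n]}(i-j_1+j_2)\, e_{i-j_1+j_2},$$
so that the conjugate attaches to the odd index $j_1$ coming from the adjoint factor, while the even index $j_2$ coming from $T^{(1)}_{n\times p}$ stays unconjugated.

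Next I would iterate over $r=1,\ldots,k$, following the derivation of (\ref{eqn:(TT^t)^ke_i}) line by line, to obtain
$$(X^{(k)}\cdots X^{(1)})e_i = \sum_{j_1,\ldots,j_{2k}=-(p-1)}^{n-1} \prod_{r=1}^{k} \bar{a}^{(r)}_{j_{2r-1}} a^{(r)}_{j_{2r}}\, I(i,J,p,n)\, e_{i-\sum_{q=1}^{2k}(-1)^q j_q},$$
with $I(i,J,p,n)$ exactly as in (\ref{eqn:I(i,j,p,n)}). Finally, using $\Tr(X^{(k)}\cdots X^{(1)}) = \sum_{i=1}^{n} e_i'(X^{(k)}\cdots X^{(1)})e_i$ selects the diagonal terms, which forces $i - \sum_{q=1}^{2k}(-1)^q j_q = i$ and hence the constraint $\sum_{q=1}^{2k}(-1)^q j_q = 0$ encoded by the Dirac delta, yielding the stated formula.

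I do not expect a real obstacle, since the argument is word-for-word that of Lemma \ref{lem:trac_TT^t}. The one point demanding care is the bookkeeping of the conjugation: I would verify on the single factor $X^{(1)}$ that the adjoint conjugates precisely the coefficients at the odd positions $j_{2r-1}$ and leaves the even positions $j_{2r}$ (from the unconjugated matrices $T^{(r)}_{n\times p}$) untouched. Once this pattern is confirmed for one factor, it propagates unchanged through the induction, so the remainder of the proof is routine.
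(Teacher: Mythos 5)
Your proposal is correct and is exactly the argument the paper intends: the paper gives no separate proof for Lemma \ref{lem:trac_TT^H}, simply referring the reader to the proof of Lemma \ref{lem:trac_TT^t}, which is what you reproduce, with the conjugation correctly tracked so that $(T^{(1)}_{n\times p})^* e_i = \sum_{j} \bar{a}^{(1)}_j \chi_{[1,p]}(i-j) e_{i-j}$ places the conjugated coefficients precisely at the odd positions $j_{2r-1}$. No gaps.
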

For the proof of Lemma \ref{lem:trac_TT^H}, we refer to the proof of Lemma \ref{lem:trac_TT^t}.
 The following theorem provides the limiting sequence for  $\E\left[\frac{1}{n}\Tr(T_{n\times p} T^*_{n \times p})^r\right]$. % for a Hermitian Toeplitz matrix with independent entries.
\begin{theorem}\label{thm: Moment TT^t Hermitian}
	%Suppose  $T_{n\times p}$ are $n\times p$ Toeplitz matrices with input sequence $\{ a_i\}_{ i \in \mathbb{Z} }$ obeying $(\ref{eqn:condition})$ and such that $a_{-j}=a_{j}$ for all $j \geq 1$ and $a_0 \equiv 0$. Let $X=T_{n\times p} (T_{n \times p})^t$. 
	Let $\{a_i\}_{i \in \N}$ be a sequence of complex random variables which satisfy Assumption II with $a_0 \equiv 0$. Suppose $X=T_{n\times p} T_{n \times p}^*$ and $T_{n \times p}$ is the $n \times p$ Hermitian Toeplitz matrix with input entries  $\{\frac{a_i}{\sqrt{n}}\}_{i \geq 0}$ and $a_{-i}=\bar{a}_i$ for all $i \in \N$. Then for all $r \geq 1$, $\E\left[\frac{1}{n} \Tr X^{r}\right]=M_{r}+o(1/\sqrt{n})$  as $p/n \rightarrow \lambda>0$, $n \rightarrow \infty$, where $M_{r}$ is as defined in (\ref{eqn:2r-Moment_symTnp}).
\end{theorem}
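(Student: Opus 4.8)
The plan is to follow the proof of Theorem~\ref{thm: Moment TT^t} almost verbatim, the only genuinely new ingredient being a careful accounting of the complex-conjugation pattern carried by $T_{n\times p}T_{n\times p}^*$ together with the Hermitian relation $a_{-i}=\bar a_i$. Starting from Lemma~\ref{lem:trac_TT^H}, I would write
\begin{equation*}
	\E\Big[\tfrac1n \Tr X^{r}\Big] = \frac{1}{n^{r+1}}\sum_{i=1}^n \sum_{J} \E\Big[\prod_{s=1}^r \bar a_{j_{2s-1}}\, a_{j_{2s}}\Big]\, I(i,J,p,n)\,\delta_{0}\Big(\sum_{q=1}^{2r}(-1)^q j_q\Big),
\end{equation*}
and record from Assumption~II the three moment identities that drive the whole computation: $\E[a_j]=0$, $\E[|a_j|^2]=\E[a_j\bar a_j]=1$, and $\E[a_j^2]=\E[x_j^2]-\E[y_j^2]=0$. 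The essential difference from the symmetric real case is that here the second-moment structure is no longer sign-insensitive, and it is precisely this feature that conspires with the conjugations to reproduce the same limit $M_r$.

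First I would run the degree-of-freedom argument of Theorem~\ref{thm: Moment TT^t} unchanged. Since $a_0\equiv 0$ and $\E[a_j]=0$, any index $j_u$ whose absolute value is unmatched contributes nothing; and since all moments are uniformly bounded by Assumption~II, partitions of $[2r]$ with a block of size strictly greater than two yield only $O(n^{r-1})$ surviving vectors and hence an $o(1/\sqrt n)$ contribution. This isolates the pair-partitions $\pi\in\PP_2(2r)$ as the only asymptotically relevant terms, with the same error bookkeeping that gives the stated $o(1/\sqrt n)$ remainder.

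The crux is the evaluation of the pairwise expectation for a block $\{u,v\}$ of $\pi$. The factor attached to position $u$ is $\bar a_{j_u}$ when $u$ is odd and $a_{j_u}$ when $u$ is even (and similarly for $v$), while matching forces $j_u=\pm j_v$. Using $a_{-j}=\bar a_j$ together with $\E[a_j^2]=0$ and $\E[|a_j|^2]=1$, a short four-case check shows the pair expectation equals $1$ precisely when either (a) $u,v$ have the \emph{same} parity and $j_u=-j_v$, or (b) $u,v$ have \emph{different} parity and $j_u=j_v$, and it vanishes in all other cases. Comparing with $\epsilon_\pi$ in~(\ref{eqn: epsilon map}), this is exactly the relation $j_u=\epsilon_\pi(v)j_v$ that defines the set $\Pi_1(\pi)$ in the proof of Theorem~\ref{thm: Moment TT^t}: the conjugation structure of the Hermitian ensemble selects, through the expectation itself, precisely those configurations that the canonical signs selected in the real case. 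On such vectors the Dirac constraint $\delta_0(\sum_q(-1)^q j_q)$ is automatically satisfied, and every other sign choice now carries expectation exactly zero rather than merely being suppressed by a lost degree of freedom.

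Consequently the surviving sum attached to each $\pi\in\PP_2(2r)$ is, term for term, the same Riemann sum that appeared in Theorem~\ref{thm: Moment TT^t}, with $j_u$ ranging over $[-M,M]$ on same-parity blocks and over the full range on different-parity blocks; passing to the limit $p/n\to\lambda$ produces the same integrand over $[0,1]\times[-m,m]^{|SP(\pi)|}\times[-\lambda,1]^{|DP(\pi)|}$ and hence the same constant $M_r$ of~(\ref{eqn:2r-Moment_symTnp}). The main obstacle is entirely the case analysis of the previous paragraph: one must track simultaneously the parity of the position (which encodes whether the factor is conjugated) and the sign relation between $j_u$ and $j_v$, and verify that the complex covariance structure $\E[a_j^2]=0,\ \E[|a_j|^2]=1$ reproduces exactly the sign pattern encoded by $\epsilon_\pi$. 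Once this identification is in place, the remainder of the argument is identical to the symmetric real case and requires no new estimates.
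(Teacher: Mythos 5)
Your proposal is correct and follows essentially the same route as the paper's proof: reduce to pair-partitions via the moment/degree-of-freedom argument of Theorem \ref{thm: Moment TT^t}, then use the parity-versus-conjugation case check (with $\E[a_j^2]=0$ and $\E[|a_j|^2]=1$) to show that exactly the configurations in $\Pi_1(\pi)$ survive with expectation $1$, so the sum reduces to the same Riemann sum and limit $M_r$. Your explicit observation that the wrong sign choices are killed outright by $\E[a_j^2]=0$ (rather than by a lost degree of freedom, as in the real symmetric case) is precisely the point the paper's terser argument relies on.
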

\begin{proof}
	The proof of Theorem \ref{thm: Moment TT^t Hermitian} is similar to the proof of Theorem \ref{thm: Moment TT^t}.  Note that similar calculations and Lemma \ref{lem:trac_TT^H} imply that 
	\begin{equation}\label{eqn:E1/n tr X Hermitian}
		\E\left[\frac{1}{n} \Tr\left(X^{ r}\right)\right]=\frac{1}{n^{r+1}} \sum_{i=1}^n \sum_{\pi \in \widehat{\mathcal{P}}(2r)}\sum_{J \in \Pi(\pi)}  \E \left[\widehat{a_J}\right] I(i,J,p,n) \delta_{0} (\sum_{q=1}^{2r}(-1)^{q} j_{q}),
	\end{equation} 
	where $\widehat{\mathcal{P}}(2r)$ is the set of partitions of $[2r]$ such that each block has size greater than or equal to two, $\Pi(\pi)$ is the set of $J=(j_1,j_2,\ldots , j_{2r}) \in \{-(p-1),\ldots,-1,1,\ldots,(n-1)\}^{2r}$ such that for all $r \sim_{\pi} s$ if and only if $|j_r|=|j_s|$, and $\widehat{a_J}=\prod_{u=1}^{r} \bar{a}_{j_{2u-1}} a_{j_{2u}}$. It follows that a non-zero contribution in the limit occurs only for pair-partitions $\pi$. Furthermore, for a pair-partition $\pi \in \PP_{2}(2r)$, a non-zero contribution in the limit occurs only for 
	$$\Pi_1(\pi)=\{ J \in \Pi(\pi): \text{ for all } u\sim_{\pi}v, \ \epsilon_{\pi}(u)j_u=\epsilon_{\pi}(v)j_v\}.$$
	Let $\{u,v\}$ be a same-parity block of $\pi$ and let $J \in \Pi_1(\pi)$, then it follows from the definition of $\Pi_1(\pi)$ that  $j_u=-j_v$ and subsequently, $\E [a_{j_u}a_{j_v}]=1$ and $\E [\bar{a}_{j_u}\bar{a}_{j_v}]$=1. Similarly, for a different-parity block $\{r,s\}$ of $\pi$ and $J \in \Pi_1(\pi)$, we get that $\E [a_{j_u}\bar{a}_{j_v}]=1$ and $\E [a_{j_u}\bar{a}_{j_v}]$=1. Thus, we get that for every $\pi \in \PP_{2}(\pi)$ and $J \in \Pi_1(\pi)$, $\E[\widehat{a_J}]=1$ and $\sum_{q=1}^{2r}(-1)^qj_q=0$. Therefore (\ref{eqn:E1/n tr X Hermitian}) is equal to (\ref{eqn:E1/n tr X eq3}) and as $n \rightarrow \infty $, $p/n \rightarrow \lambda$, $\E\left[\frac{1}{n} \Tr\left(X^{ r}\right)\right]$ converges to $M_r$ for all $r \geq 1$. 
\end{proof}

The following theorem gives the fluctuation of linear eigenvalue statistics of $T_{n\times p} T_{n \times p}^*$ when $T_{n\times p}$ is  a Hermitian Toeplitz matrix.

\begin{theorem}\label{thm:LES_TT'her}
	Let $\lambda \in (0,\infty)$. Suppose $T_{n \times p}$ is an $n \times p$ complex Hermitian Toeplitz matrix with input entries  $\{\frac{a_i}{\sqrt{n}}\}_{i \geq 0}$, where $\{a_i\}_{i \in \N}$ satisfies Assumption II and $a_0\equiv 0$.
	Then as $ n,p \tends \infty$ with $p/n \tends \lambda$,
	%	\begin{equation}
		%		w_k \stackrel{d}{\rightarrow} N(0,\sigma_k^2),
		%	\end{equation}
	%where $w_k$ is as in (\ref{eqn:wp_Hn_odd}) for non-symmetric Toeplitz matrix  $T_{n\times p}$ and 	$\sigma_p^{2}$ is as in (\ref{eqn:cov_TT'non-sym}).
	%	Moreover, for a given polynomial $Q(x)=\sum_{j=2}^{k}c_{j} x^{j}$ with degree $k\geq 2$, 
	\begin{equation*}
		\frac{1}{\sqrt{n}} \big[ \Tr Q(T_{n \times p}T^*_{n \times p}) - \E[\Tr Q(T_{n \times p}T^*_{n \times p})] \big] \stackrel{d}{\rightarrow} N(0,\sigma_Q^2),
	\end{equation*}
     where $Q(x)=\sum_{j=0}^{k}c_{j} x^{j}$ is a real polynomial with degree $k\geq 1$ and $\sigma_Q^2=\sum_{j_1,j_2=0}^{k}c_{j_1} c_{j_2} \sigma_{j_1,j_2}$ with 
	\begin{align} \label{eqn:sigma_p,q_TT'_her}
		\sigma_{j_1,j_2}= \sum_{\pi \in \mathcal{P}_2(2k_1,2k_2)}  f^{-}_I(\pi) + (\kappa-1) \sum_{\pi \in \mathcal{P}_{2,4}(2k_1,2k_2)}  (f^-_{II}(\pi)+f^+_{II}(\pi)),
	\end{align}
	where $f^-_I(\pi), f^-_{II}(\pi)$ and $f^+_{II}(\pi)$ are as given in (\ref{eqn:f_Ipi-_T}),  (\ref{eqn:f_II-pi_T}) and (\ref{eqn:f_II+pi_T}), respectively.
\end{theorem}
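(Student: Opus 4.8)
The plan is to follow the method-of-moments argument used for Theorem~\ref{thm:LES_TT'} at the level of its skeleton, replacing the real symmetric trace formula by the Hermitian one in Lemma~\ref{lem:trac_TT^H} and feeding in the complex moment data of Assumption~II. As in the proof of Theorem~\ref{thm:LES_TT'}, it suffices to establish the joint convergence of $\{w_k\}$ to a centred Gaussian family, which by Wick's formula reduces to two ingredients: (i) the limiting covariance $\lim\Cov(w_{k_1},w_{k_2})$ equals the right-hand side of \eqref{eqn:sigma_p,q_TT'_her}, and (ii) every joint cumulant coming from a cluster of length $\geq 3$ is negligible. Ingredient (ii) is essentially formal: the cluster-counting bound of Lemma~\ref{lem:B_{P_l}} and its consequence Lemma~\ref{lem:maincluster_TT'} use only uniform boundedness of moments, which Assumption~II supplies for $\{|a_j|\}$, so the same estimates force every length-$\geq 3$ cluster to contribute $o(1)$, and the pairing structure of Wick's formula then assembles the limit exactly as in \eqref{eqn:multisplit}.

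The substance is ingredient (i), the Hermitian analogue of Theorem~\ref{thm:covarTT'}. Starting from Lemma~\ref{lem:trac_TT^H}, I would expand $\Cov(w_{k_1},w_{k_2})$ over partitions $\pi\in\widehat{\mathcal P}(2k_1+2k_2)$ and run the same trichotomy on $|\pi|$ (Cases I--III of Theorem~\ref{thm:covarTT'}). Cases I and II isolate $\PP_{2,4}(2k_1,2k_2)$ and Case III isolates $\PP_2(2k_1,2k_2)$ exactly as before, since the degree-of-freedom count depends only on the constraints $\delta_0(\sum(-1)^q j_q)$ and the index ranges, not on the law of the entries. What changes is the evaluation of the surviving expectations. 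Because the odd positions in Lemma~\ref{lem:trac_TT^H} carry $\bar a$ and the even positions carry $a$, and because Assumption~II gives $\E[a_j^2]=0$ while $\E[|a_j|^2]=1$ and $\E[|a_j|^4]=\kappa$, a pair block $\{u,v\}$ contributes only through the conjugate pairing; using $a_{-i}=\bar a_i$ this pins the relative sign to be exactly $\epsilon_\pi(u)j_u=\epsilon_\pi(v)j_v$ with $\epsilon_\pi$ as in \eqref{eqn: epsilon map}. This is precisely the mechanism already used in the Hermitian moment computation (Theorem~\ref{thm: Moment TT^t Hermitian}), where only $\Pi_1(\pi)$ survives.

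I expect the one genuinely new point---and the main obstacle---to be explaining why $f_I^+$ disappears from the covariance while both $f_{II}^-$ and $f_{II}^+$ remain. In the real symmetric case a cross-matched pair $\{u,v\}$ allows $j_u=\pm j_v$ freely, and the two orientations $(J_1,J_2)$ and $(J_1,-J_2)$ produce $f_I^+$ and $f_I^-$. In the Hermitian case the vanishing of $\E[a_j^2]$ eliminates the non-conjugate pairing, so a single cross-matched pair fixes the relative orientation of $J_1$ and $J_2$ uniquely and only the $f_I^-$ Riemann sum appears, giving the first sum of \eqref{eqn:sigma_p,q_TT'_her}. By contrast, a four-element cross block $\{r_1,s_1,r_2,s_2\}\in\PP_{2,4}(2k_1,2k_2)$ is balanced, carrying two conjugated and two unconjugated factors of a single $|a_c|^2$-type variable, so the fourth moment $\E[|a_c|^4]=\kappa$ is realised in two distinct sign configurations---the analogues of \eqref{eqn:tau condition2} and \eqref{eqn:tau condition3}---which yield $f_{II}^+$ and $f_{II}^-$ together with the weight $(\kappa-1)$. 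Once this bookkeeping is settled, the two sums pass to the integrals \eqref{eqn:f_Ipi-_T}, \eqref{eqn:f_II-pi_T}, \eqref{eqn:f_II+pi_T} as Riemann sums, establishing $\lim\Cov(w_{k_1},w_{k_2})=\sigma_{k_1,k_2}$ as in \eqref{eqn:sigma_p,q_TT'_her}. Combining this with (ii) and Wick's formula, and then summing over the monomials of $Q$ with coefficients $c_{j_1}c_{j_2}$, yields the claimed CLT with variance $\sigma_Q^2$.
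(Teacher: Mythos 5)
Your proposal is correct and follows essentially the same route as the paper: the paper proves Theorem \ref{thm:LES_TT'her} by rerunning the covariance trichotomy of Theorem \ref{thm:covarTT'} with the Hermitian trace formula (Lemma \ref{lem:trac_TT^H}), noting that $\E[a_j^2]=0$ forces conjugate pairing on cross-matched pair blocks (so only $f_I^-$ survives) while the four-element blocks still admit both sign configurations (yielding the $(\kappa-1)(f^-_{II}(\pi)+f^+_{II}(\pi))$ term), and then relies on the cluster/Wick machinery of Theorem \ref{thm:LES_TT'} for Gaussianity. Your identification of the disappearance of $f_I^+$ as the one genuinely new point is precisely the ``key difference'' the paper itself highlights.
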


\begin{proof}
	The proof is similar to the proof of Theorem \ref{thm:covarTT'} and here we only mention the differences in the proofs.  The key difference here is that $\E a_j^2=0$ for all $j$. Define $$w^*_{k}:=\frac{1}{\sqrt{n}} \big[ \Tr(T_{n \times p} T^*_{n \times p})^{k} - \E[\Tr(T_{n \times p} T^*_{n \times p})^{k}] \big].$$
	 By Lemma \ref{lem:trac_TT^H}, the covariance structure can be given by 
	 \begin{align}\label{eqn:cov XX* 1}
	 	\Cov(w^*_{k_1},w^*_{k_2}) &=\frac{1}{n^{k_1+k_2+1}}  \sum_{\pi \in \widehat{\mathcal{P}}\left(2k_1+2k_2\right)} \sum_{i=1 \atop i^\prime=1}^{n} |\Pi(\pi)| V(\pi),
	 	 \end{align} 
	 	where 
	 	$$V(\pi) =\Big[\E\Big(\prod_{r=1}^{k_1+k_2} \bar{a}_{j_{2r-1}}a_{j_{2r}}\Big)-\E\Big(\prod_{r=1}^{k_1} \bar{a}_{j_{2r-1}}a_{j_{2r}}\Big)\E\Big(\prod_{r=k_2+1}^{k_2} \bar{a}_{j_{2r-1}}a_{j_{2r}}\Big) \Big] g(i,i^\prime,J_1,J_2,p,n),$$
	with $g(i,i^\prime,J_1,J_2,p,n)$ is as given in (\ref{eqn:g(J,Jprime,p,n)}) and $\widehat{\mathcal{P}}(2k_1+2k_2)$ is the set of all partitions of $[2k_1+2k_2]$ with all blocks having size at least 2. Now, we proceed to find the partitions $\pi$ that make non-zero contribution in the limit for (\ref{eqn:cov XX* 1}). 
 As in Theorem \ref{thm:covarTT'}, it follows that the contribution due to partitions $\pi$ with $|\pi|<k_1+k_2-1$ would be $o(1)$. 
 
 For $|\pi|=k_1+k_2-1$, it follows that a non-zero contribution in the limit arises due to $\pi \in \PP_{2,4}(2k_1+2k_2)$. Furthermore, for $\pi \in \PP_{2,4}(2k_1+2k_2)$ and $J \in \Pi(\pi)$ we get that a non-zero contribution occurs only when $J$ belongs to $\Pi_1(\pi)$ or $ \Pi_2(\pi)$, where $\Pi_1(\pi)$ and $\Pi_2(\pi)$ are as defined in (\ref{eqn: Cov eq 3}). Consider a vector $J$ belonging to $\Pi_1(\pi)$ or $\Pi_2(\pi)$. For a same-parity block $\{r,s\}$ of $\pi$, by the definition of $\epsilon_{\pi}$, it follows that $j_r=-j_s$ and so $\bar{a}_{j_r}=a_{j_s}$. Similarly, for a different-parity block $\{r,s\}$ of $\pi$, we get $a_{j_r}=a_{j_s}$ and $\bar{a}_{j_r}=\bar{a}_{j_s}$. Proceeding in the same fashion, we get that for a block $\{r_1,s_1,r_2,s_2\}$ of the $\pi$, their appropriate product in $\E\Big(\prod_{r=1}^{2k_1+2k_2} \bar{a}_{j_{2r-1}}a_{j_{2r}}\Big)$ is $\kappa$. Thus for $n \rightarrow \infty$ and $p/n \rightarrow \lambda$, the contribution of $\pi$ is $(\kappa-1) (f^-_{II}(\pi)+f^+_{II}(\pi))$.
 
  For pair-partitions $\pi$, since $\E a_j^2=0$, a non-zero contribution occurs only when for all $r \sim_{\pi} s$, $a_{j_r}=\bar{a}_{j_s}$, that is, $j_r= -j_s$. Thus, for $\pi \in \PP_2(2k_1,2k_2)$, a non-zero contribution occurs only when $(J_1,J_2) \in \Pi(\pi)$ obeys (\ref{eqn: epsilon_cond (J_1,J_2)}). Thus the contribution in this case is $f_I^{-}(\pi)$ and this completes the proof. 
\end{proof}

Now we consider Hermitian Toeplitz matrices with complex Brownian motion entries. Firstly, we state the definition of a complex Brownian motion. A complex-valued stochastic process $\{Z(t); t \geq 0\}$ is a \textit{complex Brownian motion} if it can be written as
$$Z(t)= X(t) +\mathrm{i} Y(t),$$
where $\{X(t); t \geq 0\}$ and $\{Y(t); t \geq 0\}$ are independent real Brownian motions. Let  $\{Z_m(t); t \geq 0\}_{m \geq 0}$ be an independent sequence of complex Brownian motion. The following theorem provides the fluctuation of linear eigenvalue statistics of $T_{n\times p}(t) T_{n \times p}^*(t)$ for Hermitian Toeplitz matrix with complex Brownian motion entries.
\begin{theorem}\label{thm:LES_Tp(t)_her} 
	Let $\lambda \in (0,\infty)$ and $\{Z_m(t); t \geq 0\}_{m \geq 0}$ be an independent sequence of complex Brownian motion with $Z_0(t)=0$. Suppose the entries of Hermitian Toeplitz matrix  $T_{n\times p}(t)$ is  $\{\frac{Z_m(t)}{\sqrt{n}}; t \geq 0\}_{m \geq 0}$. Then for $ k\geq 1$, as $ n,p \tends \infty$ with $p/n \tends \lambda$,
	\begin{equation*}
		 \Big\{\frac{1}{\sqrt{n}} \big[ \Tr (T_{n \times p}(t)T^*_{n \times p}(t)) - \E[\Tr (T_{n \times p}(t)T^*_{n \times p}(t))] \big] ; t \geq 0\Big \} \stackrel{\mathcal D}{\rightarrow} \{N_{k}(t) ; t \geq 0\},
	\end{equation*}
	where $\{N_{k}(t);t\geq 0, k\geq 1\}$ are zero mean Gaussian processes with covariance structure as: for $t_1 \leq t_2$,
	\begin{align*} %\label{eqn:covTT'(t)_her}
		&\Cov(N_{k_1}(t_1),N_{k_2}(t_2)) \nonumber \\
		& \ = \sum_{r=1}^{k_2} \binom{2k_2}{2r} t_1^{k_1+r} (t_2-t_1)^{k_2-r} \Big[ \sum_{(\pi_1,\pi_2) \in \mathcal{P}_2(2k_1,2r) \atop \times \mathcal{P}_2(2k_2-2r)}  f_I^-(\pi) + 2 \sum_{(\pi_1,\pi_2) \in \mathcal{P}_{2,4}(2k_1,2r)\atop \times \mathcal{P}_2(2k_2-2r)}  (f^-_{II}(\pi)+f^+_{II}(\pi)) \Big].
	\end{align*}
	%	where $M'_{2(k_2-r/2)}$ is as in (\ref{eqn:2r-moment_non-symT}) and
	%\begin{align*}
	%\theta'_1(k_1,r)& = \sum_{\pi \in \mathcal{SP}_2(2k_1,r)}  g_I(\pi) + 2 \sum_{\pi \in \mathcal{SP}_{2,4}(2k_1,r)}  g_{II}(\pi),
	%\end{align*}
	Here $f^-_I(\pi), f^-_{II}(\pi)$ and $f^+_{II}(\pi)$ are as given in (\ref{eqn:f_Ipi-_T}),  (\ref{eqn:f_II-pi_T}) and (\ref{eqn:f_II+pi_T}), respectively 	with $(\pi_1,\pi_2) \in \mathcal{SP}_2(2k_1,2r) \times \mathcal{SP}_2(2(k_2-r))$ as in Definition \ref{remark:pi1,pi2}.
\end{theorem}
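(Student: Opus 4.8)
The plan is to follow the two-part strategy already used for Theorem~\ref{thm:LES_Tp}: establish the finite-dimensional convergence of the family $\{w_k(t);t\geq 0\}$ together with tightness of the process, and then conclude process convergence via the criterion of Proposition~\ref{pro:tight} (Theorem~I.4.3 of \cite{Ikeda1981}). Throughout I would work with the Hermitian trace formula of Lemma~\ref{lem:trac_TT^H} in place of Lemma~\ref{lem:trac_TT^t}, so that each factor in the trace carries one barred and one unbarred entry.

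For the finite-dimensional convergence, I would apply the Cram\'er--Wold device to reduce to one-dimensional convergence of arbitrary linear combinations $\sum_m c_m w_{k_m}(t_m)$, and then use the method of moments with Wick's formula exactly as in the proof of Proposition~\ref{pro:finite convergence}. The cluster decomposition of the index vectors (Definition~\ref{def:cluster}) combined with Lemma~\ref{lem:maincluster_TT'} forces every joint cumulant of order at least three to vanish, so only clusters of length two survive and the whole computation collapses to determining the limiting covariance $\Cov(N_{k_1}(t_1),N_{k_2}(t_2))$. To compute this covariance I would set $U:=T_{n\times p}(t_1)$ and $V:=T_{n\times p}(t_2)-T_{n\times p}(t_1)$; by the independence of increments of complex Brownian motion, $U$ and $V$ are independent Hermitian Toeplitz matrices whose entries have variances $t_1$ and $t_2-t_1$, respectively. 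Expanding $\big(T_{n\times p}(t_2)T^*_{n\times p}(t_2)\big)^{k_2}$ binomially in $U$ and $V$, precisely as in Lemma~\ref{lem:covarTT'(t)}, produces the binomial weights $\binom{2k_2}{2r}$ together with the time factors $t_1^{k_1+r}(t_2-t_1)^{k_2-r}$.

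The decisive structural input is that a complex Brownian motion $Z_m=X_m+\mathrm{i}Y_m$ satisfies $\E[Z_m(t)^2]=0$, mirroring the relation $\E a_j^2=0$ exploited in the static Hermitian setting of Theorem~\ref{thm:LES_TT'her}. Consequently, for each pair-partition the conjugation pattern forces $j_r=-j_s$ within a matched pair, so that only the integrals $f_I^-(\pi)$ survive from the $\mathcal{P}_2$ sum, while the four-element blocks bring in the complex Gaussian fourth moment and, after the increment splitting via Definition~\ref{remark:pi1,pi2}, contribute the term $2\big(f_{II}^-(\pi)+f_{II}^+(\pi)\big)$. Assembling these contributions over $(\pi_1,\pi_2)\in\mathcal{P}_2(2k_1,2r)\times\mathcal{P}_2(2k_2-2r)$ and over $(\pi_1,\pi_2)\in\mathcal{P}_{2,4}(2k_1,2r)\times\mathcal{P}_2(2k_2-2r)$ yields the stated covariance. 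I expect the main obstacle to be the simultaneous bookkeeping of two interacting layers: the Hermitian conjugation structure (which entries appear barred in the trace formula of Lemma~\ref{lem:trac_TT^H}) and the increment decomposition (which entries originate from $U$ versus $V$). Keeping these consistent while tracking the parity of blocks is where the argument is most delicate.

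For tightness, the plan is to verify the hypotheses of Proposition~\ref{pro:tight} with $\alpha=4$ and $\beta=1$, following the symmetric proof verbatim with complex Gaussian moments replacing real ones. Since $w_k(0)=0$, the first condition is immediate. For the increment bound I would write $X_n(t)=\widetilde{X}_n(t-s)+X_n(s)$ and expand $(X_n(t))^k-(X_n(s))^k$ binomially; the scaling $Z_m(t)-Z_m(s)\distas{D}\sqrt{t-s}\,(\text{standard complex Gaussian})$ extracts a factor $\sqrt{t-s}$ from each increment, so that $w_k(t)-w_k(s)$ carries an overall factor $\sqrt{t-s}$. Raising to the fourth power and analysing the three connectivity cases for the four index vectors---disconnected (contribution zero by independence), two connected pairs (contribution $O((t-s)^2)$ via $|B_{K_2}|=O(n^{2k-1})$), and a single cluster of length four (negligible via $|B_{K_4}|=o((n+p)^{4k-2})$)---using Lemma~\ref{lem:B_{P_l}} gives $\E|w_k(t)-w_k(s)|^4\leq M_T(t-s)^2$, as required. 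These steps are essentially identical to the symmetric case, the only change being the routine complex Gaussian moment evaluations.
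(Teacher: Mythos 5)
The paper never actually proves Theorem \ref{thm:LES_Tp(t)_her}: the theorem is stated and the text moves on, the proof being implicitly left as the combination of the process-convergence argument for Theorem \ref{thm:LES_Tp} with the Hermitian computation of Theorem \ref{thm:LES_TT'her}. Your plan assembles exactly that combination --- Lemma \ref{lem:trac_TT^H} as trace formula, Cram\'er--Wold plus the method of moments and Wick's formula with the cluster reduction for finite-dimensional convergence, the increment splitting $U=T_{n\times p}(t_1)$, $V=T_{n\times p}(t_2)-T_{n\times p}(t_1)$ for the covariance, the observation $\E[Z_m(t)^2]=0$ to discard $f_I^+(\pi)$, and the verbatim tightness argument (where only moment bounds, not exact constants, matter) --- so structurally you are on the route the paper intends.

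The step that fails is the coefficient of the $\mathcal{P}_{2,4}$ contribution. You claim the four-element blocks ``bring in the complex Gaussian fourth moment'' and contribute $2\big(f_{II}^-(\pi)+f_{II}^+(\pi)\big)$, but your own reduction --- normalize $u_j=Z_j(t_1)$, $v_j=Z_j(t_2)-Z_j(t_1)$ and invoke the static Hermitian covariance --- produces the coefficient $\kappa-1$, exactly as in (\ref{eqn:sigma_p,q_TT'_her}), where $\kappa$ is the normalized fourth moment of the entries. For a complex Gaussian $a=x+\mathrm{i}y$ with $\E[x^2]=\E[y^2]=\tfrac{1}{2}$ one has $\kappa=\E[|a|^4]=2$, hence $\kappa-1=1$. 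The factor $2$ appearing in Lemma \ref{lem:covarTT'(t)} is specific to \emph{real} Brownian motion, where $\E[b_j(t_1)^4]=3t_1^2$ gives $\kappa-1=2$; it does not survive the passage to complex entries. So a careful execution of your plan yields coefficient $1$, not $2$, on $(f_{II}^-+f_{II}^+)$ (and, additionally, with the paper's definition of complex Brownian motion through standard real and imaginary parts, $\E[|Z_m(t)|^2]=2t$, so powers of $2$ enter the time factors unless you renormalize the entries). In other words, the argument you describe cannot produce the displayed covariance; what it proves is the statement with $\kappa-1=1$ in place of $2$, which strongly suggests the constant in the paper's statement was carried over from the real case. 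A minor additional point: Lemma \ref{lem:maincluster_TT'} is stated for real entries under Assumption I, so the (routine) complex analogue should be recorded before you invoke it to kill cumulants of order three and higher.
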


\begin{remark}
	In Theorem \ref{thm:LES_TT'her} and Theorem \ref{thm:LES_Tp(t)_her}, we considered the Hermitian Toeplitz matrices with  zero diagonal entries. For the Hermitian Toeplitz matrices with non-zero diagonal entries, one can conclude similar results as  Theorem \ref{thm:LES_TT'_a0} and Theorem \ref{thm:LES_Tp_b0}.
\end{remark}

\section{\large \bf \textbf{Schatten r-norm of random Toeplitz matrices}} \label{sec:schatten-norm}
%\red{Write in details and clearly way, like, What is $X$; in (i), (ii),, r?? try to define for matrix of order  $n \times p$. What do you mean by results for fluctuation? }
%\red{If you want to add something some, add in this section}
For an $n \times p$ matrix $A$ and $1 \leq r < \infty$, the $r$-Schatten norm is defined as
\begin{equation*}
	||A||_r:=\left(\Tr(AA^*)^{r/2}\right)^{1/r}=\left(\sum_{i} \sigma_i^r\right)^{1/r},
\end{equation*}
where $\sigma_i$ are the singular values of the matrix $A$. It follows that for $w_{k}$ as defined in (\ref{eqn:wp_Hn_odd}), 
\begin{equation*}
	w_k=\frac{1}{\sqrt{n}} \left(||T_{n \times p}||_{2k}^{2k}-\E ||T_{n \times p}||_{2k}^{2k}\right).
\end{equation*}

Our study on linear eigenvalue statistics of $T_{n \times p}T_{n \times p}^\prime$ gives the following results on the $r$-Schatten norms of Toeplitz matrices.
\begin{theorem}
	Let $\lambda \in (0,\infty)$ and  $\{a_i\}_{i \in \Z}$ be a sequence of random variables with $a_0 \equiv 0$. Let $r \in \N$ and suppose $p,n \rightarrow \infty$ and $p/n \rightarrow \lambda$.
	\begin{enumerate}[(i)]
	\item For symmetric Toeplitz matrix $T_{n \times p}$ with input sequence $\{\frac{a_i}{\sqrt{n}}\}_{i \geq 0}$ such that $\{a_i\}_{i \in \N}$ obeys Assumption I, $\frac{1}{n^{1/2r}}||T_{n \times p}||_{2r} \rightarrow M_{r}^{1/2r}$ in probability, where $M_r$ is as defined in (\ref{eqn:2r-Moment_symTnp}).
	
	\item  For non-symmetric Toeplitz matrix $T_{n \times p}$, with input sequence $\{\frac{a_i}{\sqrt{n}}\}_{i \in \Z}$  such that $\{a_i\}_{|i|\geq 1}$ obeys Assumption I, $	\frac{1}{n^{1/2r}}||T_{n \times p}||_{2r} \rightarrow (M_{r}^{\prime})^{1/2r}$ in probability, where $M_r^{\prime}$ is a defined in (\ref{eqn:2r-moment_non-symT}).
	
	\item For Hermitian Toeplitz matrix $T_{n \times p}$, with input sequence $\{\frac{a_i}{\sqrt{n}}\}_{n \geq 0}$ such that $\{a_i\}_{i \in \N}$ obeys Assumption II, $\frac{1}{n^{1/2r}}||T_{n \times p}||_{2r} \rightarrow \left({M_{r}}\right)^{1/2r} $in probability, where $M_r$ is as defined in (\ref{eqn:2r-Moment_symTnp}).
	\end{enumerate}
\end{theorem}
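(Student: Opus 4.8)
The plan is to reduce each of the three statements to the moment convergence already established, combined with the fact that the fluctuations captured by the central limit theorems are of strictly lower order. The starting observation is the identity $\|T_{n\times p}\|_{2r}^{2r} = \Tr(T_{n\times p}T_{n\times p}^{*})^{r} = \Tr(X^{r})$, where $X = T_{n\times p}T'_{n\times p}$ (or $T_{n\times p}T^{*}_{n\times p}$ in the Hermitian case). Hence
\begin{equation*}
\frac{1}{n^{1/2r}}\|T_{n\times p}\|_{2r} = \left(\frac{1}{n}\Tr(X^{r})\right)^{1/2r},
\end{equation*}
and since $x\mapsto x^{1/2r}$ is continuous on $[0,\infty)$, the continuous mapping theorem reduces part (i) to showing $\frac{1}{n}\Tr(X^{r}) \convp M_{r}$, and parts (ii), (iii) to the analogous statements with limits $M'_{r}$ and $M_{r}$, respectively.

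For part (i) I would split
\begin{equation*}
\frac{1}{n}\Tr(X^{r}) = \frac{1}{\sqrt{n}}\,w_{r} + \E\!\left[\frac{1}{n}\Tr(X^{r})\right].
\end{equation*}
By Theorem \ref{thm: Moment TT^t} the deterministic term converges to $M_{r}$. For the random term, Theorem \ref{thm:LES_TT'} gives $w_{r}\convd N_{r}$, so $w_{r}=O_{P}(1)$ and therefore $\frac{1}{\sqrt{n}}\,w_{r}\convp 0$. Slutsky's theorem then yields $\frac{1}{n}\Tr(X^{r})\convp M_{r}$, and the continuous mapping step completes the argument. Equivalently, one may bypass the full CLT and observe that $\var\!\big(\frac{1}{n}\Tr(X^{r})\big) = \frac{1}{n}\Cov(w_{r},w_{r})$, which tends to $0$ because $\Cov(w_{r},w_{r})$ converges to the finite constant $\sigma_{r,r}$ of Theorem \ref{thm:covarTT'}; Chebyshev's inequality then gives convergence in probability.

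Parts (ii) and (iii) follow the identical two-step recipe with the inputs replaced by the corresponding earlier results. In the non-symmetric case I would use Theorem \ref{thm: Moment TT^t_non-sym} for the mean (producing the limit $M'_{r}$) and the polynomial central limit theorem of Theorem \ref{thm:LES_TT'non-sym} with $Q(x)=x^{r}$ to kill the fluctuation term; in the Hermitian case I would use Theorem \ref{thm: Moment TT^t Hermitian} for the mean (producing $M_{r}$) and the polynomial central limit theorem of Theorem \ref{thm:LES_TT'her} with $Q(x)=x^{r}$. In each case the in-probability convergence of $\frac{1}{n}\Tr(X^{r})$ to the appropriate limiting moment is transported to the Schatten norm by the same continuous mapping.

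The argument carries no genuine difficulty once the earlier theorems are in hand: the entire substance lies in the single step showing that $\frac{1}{\sqrt{n}}\,w_{r}$ is asymptotically negligible, which is precisely where the central limit theorems (or, more economically, the covariance estimates) are invoked. The continuous mapping step is automatic, and Remark \ref{rem: Mr >0} serves only to guarantee that the limit $M_{r}^{1/2r}$ is a strictly positive constant, so that the statement is nondegenerate; the same positivity reasoning applies verbatim to $M'_{r}$.
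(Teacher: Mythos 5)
Your proposal is correct and matches the paper's argument in substance: the paper likewise combines Theorem \ref{thm: Moment TT^t} for the mean with Theorem \ref{thm:covarTT'} to conclude $\var\big(\tfrac{1}{n}\Tr(X^r)\big)=\tfrac{1}{n}\var(w_r)\rightarrow 0$, then applies Chebyshev and the continuous map $x\mapsto x^{1/2r}$ --- exactly your ``equivalently, bypass the full CLT'' variant. Your primary route via $w_r=O_P(1)$ from the CLT is a harmless substitute for the variance bound, and the treatment of parts (ii) and (iii) by swapping in the corresponding moment and covariance results is also how the paper proceeds.
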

\begin{proof}
	We present only the proof of part $(i)$, the rest of the parts follow from similar arguments. Note that by Theorem \ref{thm: Moment TT^t}, it follows that $\E\left[\frac{1}{n}||T_{n \times p}||_{2r}^{2r}\right]$ converges to $M_{r}$ as $n$ goes to infinity. From Theorem \ref{thm:covarTT'}, it follows that $\frac{1}{n^2}\var\left[||T_{n \times p}||_{2r}^{2r}-\E ||T_{n \times p}||_{2r}^{2r}\right]$ converges to zero as $n \rightarrow \infty$. Thus we get that $\frac{1}{n}||T_{n \times p}||_{2r}^{2r}$ converges in probability to $M_{r}$ for all $r \geq 1$, implying the result.
\end{proof}
The following theorem says about the fluctuation behaviour of Schatten norm of $T_{n \times p}$.
\begin{theorem}
		Let $\lambda \in (0,\infty)$ and  $\{a_i\}_{i \in \Z}$ be a sequence of random variables with $a_0 \equiv 0$. Let $r \in \N$ and suppose $n,p \rightarrow \infty$ and $p/n \rightarrow \lambda$.
	\begin{enumerate}[(i)]
		\item For symmetric Toeplitz matrix $T_{n \times p}$ with input sequence $\{\frac{a_i}{\sqrt{n}}\}_{n \geq 0}$ such that $\{a_i\}_{i \in \N}$ obeys Assumption I,
		 $$\sqrt{n}\left(\frac{||T_{n \times p}||_{2r}}{n^{1/2r}} - M_{r}^{1/2r} \right) \stackrel{d}{\rightarrow}  N\left(0,\frac{1}{4r^2}M_{r}^{\frac{1}{r}-2}\sigma_{r,r}\right),$$
		 where $M_{r}$ is as defined in (\ref{eqn:2r-Moment_symTnp}) and $\sigma_{r,r}$ as in (\ref{eqn:var Toeplitz symm}).  
		
		\item  For non-symmetric Toeplitz matrix $T_{n \times p}$, with input sequence $\{\frac{a_i}{\sqrt{n}}\}_{i \in \Z}$  such that $\{a_i\}_{|i|\geq 1}$ obeys Assumption I, 
		$$\sqrt{n}\left(\frac{||T_{n \times p}||_{2r}}{n^{1/2r}} - {M^\prime_r}^{1/2r}\right) \stackrel{d}{\rightarrow}  N\left(0,\frac{1}{4r^2}{M^\prime_{r}}^{\frac{1}{2r}-1}\widehat{\sigma}_{r,r}\right),$$ 
		where $M'_{r}$ is as defined in (\ref{eqn:2r-moment_non-symT}) and $\widehat{\sigma}_{r,r}$ as in (\ref{eqn:cov_TT'non-sym}).
	
		\item For Hermitian Toeplitz matrix $T_{n \times p}$, with input sequence $\{\frac{a_i}{\sqrt{n}}\}_{n \geq 0}$ such that $\{a_i\}_{i \in \N}$ obeys Assumption II,
		 $$\sqrt{n}\left(\frac{||T_{n \times p}||_{2r}}{n^{1/2r}} - M_{r}^{1/2r} \right) \stackrel{d}{\rightarrow}  N\left(0,\frac{1}{4r^2}M_{r}^{\frac{1}{r}-2}\sigma_{r,r}\right),$$
		 where $M_{r}$ is as defined in (\ref{eqn:2r-Moment_symTnp}) and $\sigma_{r,r}$ as in (\ref{eqn:sigma_p,q_TT'_her}).
	\end{enumerate}
\end{theorem}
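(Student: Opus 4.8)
The plan is to derive all three parts from the delta method, using as inputs the central limit theorem for $w_r$ (Theorem~\ref{thm:LES_TT'}, or its analogues Theorem~\ref{thm:LES_TT'non-sym} and Theorem~\ref{thm:LES_TT'her}) together with the sharp moment expansion (Theorem~\ref{thm: Moment TT^t}, and its analogues Theorem~\ref{thm: Moment TT^t_non-sym} and Theorem~\ref{thm: Moment TT^t Hermitian}). I will describe part (i) in detail; parts (ii) and (iii) are verbatim the same argument after substituting the corresponding moment limit ($M_r'$ or $M_r$) and covariance ($\widehat\sigma_{r,r}$ or $\sigma_{r,r}$).

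First I would reduce to a statement about the normalized $2r$-th power. Writing $X=T_{n\times p}T'_{n\times p}$ and $Y_n:=\tfrac1n\|T_{n\times p}\|_{2r}^{2r}=\tfrac1n\Tr X^r$, we have $\tfrac{\|T_{n\times p}\|_{2r}}{n^{1/2r}}=Y_n^{1/2r}=g(Y_n)$ for $g(x)=x^{1/2r}$, so the claim is exactly a CLT for $g(Y_n)$ centered at $g(M_r)=M_r^{1/2r}$.

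The key step is to convert the CLT for $w_r$, which is centered at $\E[\Tr X^r]$, into a CLT for $Y_n$ centered at the deterministic constant $M_r$. I would split
\begin{equation*}
\sqrt n\,(Y_n-M_r)=\tfrac{1}{\sqrt n}\big(\Tr X^r-\E[\Tr X^r]\big)+\tfrac{1}{\sqrt n}\big(\E[\Tr X^r]-nM_r\big)=w_r+\tfrac{1}{\sqrt n}\big(\E[\Tr X^r]-nM_r\big).
\end{equation*}
By Theorem~\ref{thm:LES_TT'} (with $Q(x)=x^r$) the term $w_r$ converges in distribution to $N(0,\sigma_{r,r})$, while Theorem~\ref{thm: Moment TT^t} gives $\E[\tfrac1n\Tr X^r]=M_r+o(1/\sqrt n)$, so the bias term equals $\sqrt n\big(\E[\tfrac1n\Tr X^r]-M_r\big)=o(1)$. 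Slutsky's theorem then yields $\sqrt n\,(Y_n-M_r)\stackrel{d}{\rightarrow}N(0,\sigma_{r,r})$.

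Finally I would apply the delta method. Since $M_r>0$ by Remark~\ref{rem: Mr >0}, the map $g(x)=x^{1/2r}$ is differentiable at $M_r$ with $g'(M_r)=\tfrac{1}{2r}M_r^{1/2r-1}$, hence $\sqrt n\,\big(g(Y_n)-g(M_r)\big)\stackrel{d}{\rightarrow}N\big(0,(g'(M_r))^2\sigma_{r,r}\big)$, and $(g'(M_r))^2=\tfrac{1}{4r^2}M_r^{\,2(1/2r-1)}=\tfrac{1}{4r^2}M_r^{1/r-2}$, which is the stated variance. I expect the only genuine obstacle to be the bias control in the previous step: if the moment expansion gave merely $\E[\tfrac1n\Tr X^r]\to M_r$ without the rate $o(1/\sqrt n)$, the gap between centering at $\E[\Tr X^r]$ and at $nM_r$ could contribute a nonvanishing---or even divergent---shift at the $\sqrt n$ scale, and the limit would no longer be centered at $M_r^{1/2r}$. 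It is precisely the sharp estimate in Theorem~\ref{thm: Moment TT^t} that rules this out; everything else is the textbook delta method.
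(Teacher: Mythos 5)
Your proposal is correct and is essentially identical to the paper's proof: the paper likewise uses Theorem~\ref{thm: Moment TT^t} to show $\sqrt{n}\bigl(\tfrac{1}{n}\E\|T_{n\times p}\|_{2r}^{2r}-M_r\bigr)\to 0$, combines this with Theorem~\ref{thm:LES_TT'} to get $\sqrt{n}\bigl(\tfrac{1}{n}\|T_{n\times p}\|_{2r}^{2r}-M_r\bigr)\stackrel{d}{\rightarrow}N(0,\sigma_{r,r})$, invokes Remark~\ref{rem: Mr >0} for $M_r>0$, and applies the delta method with $g(x)=x^{1/2r}$. Your write-up merely makes explicit the Slutsky step and the bias decomposition that the paper leaves implicit.
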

\begin{proof}
	We only prove part $(i)$. The rest of the  cases follow from similar argument. Note that Theorem \ref{thm: Moment TT^t} imply $\sqrt{n}\left(\frac{1}{n} \E ||T_{n \times p}||_{2r}^{2r}- M_r \right)$ converges to 0. Therefore, from Theorem \ref{thm:LES_TT'}, it follows that 
	$$\sqrt{n}\left(\frac{||T_{n \times p}||_{2r}^{2r}}{n} - M_{r}\right)$$
	converges in distribution to the normal distribution with mean zero and variance $\sigma_{r,r}$. From Remark \ref{rem: Mr >0}, it follows that $M_r$ is strictly greater than zero for all $r$. Applying Delta method with the function $g(x)=x^{1/2r}$ gives the required result. 
\end{proof}

\noindent\textbf{Acknowledgment:}
The research work of S.N. Maurya is supported by the fund:
NBHM Post-doctoral Fellowship (order no. 0204/10/(25)/2023/R$\&$D-II/2803). This work was partially done during his stay at IISER Bhopal (Funded by DST/INSPIRE/04/2020/000579). \\
%We would like to thank the anonymous reviewer for his comments and valuable suggestions.

\noindent\textbf{Author declaration:} The authors have no conflicts to disclose.\\

\noindent\textbf{Data availability:} Data sharing is not applicable to this article as no new data were created or analysed in this study.

%\bibliography{shambhubib}
%\bibliographystyle{amsplain}

%\providecommand{\bysame}{\leavevmode\hbox to3em{\hrulefill}\thinspace}
%\providecommand{\MR}{\relax\ifhmode\unskip\space\fi MR }
%% \MRhref is called by the amsart/book/proc definition of \MR.
%\providecommand{\MRhref}[2]{%
%	\href{http://www.ams.org/mathscinet-getitem?mr=#1}{#2}
%}
%\providecommand{\href}[2]{#2}

\providecommand{\bysame}{\leavevmode\hbox to3em{\hrulefill}\thinspace}
\providecommand{\MR}{\relax\ifhmode\unskip\space\fi MR }
% \MRhref is called by the amsart/book/proc definition of \MR.
\providecommand{\MRhref}[2]{%
	\href{http://www.ams.org/mathscinet-getitem?mr=#1}{#2}
}
\providecommand{\href}[2]{#2}

\end{document}